\newtheorem{theo}{Theorem}[section]
\newtheorem{lemma}[theo]{Lemma}
\newtheorem{rem}{Remark}
\newcommand{\old}[1]{{}}
\DeclareMathOperator{\Z}{Z}
\DeclareMathOperator{\W}{W}
\def\emptyset{\varnothing}
\newcolumntype{L}[1]{>{\raggedright\let\newline\\\arraybackslash\hspace{0pt}}b{#1}}
\newcolumntype{C}[1]{>{\centering\let\newline\\\arraybackslash\hspace{0pt}}b{#1}}
\newcolumntype{R}[1]{>{\raggedleft\let\newline\\\arraybackslash\hspace{0pt}}m{#1}}
\begin{document}

\pagestyle{plain}

\title{Subtrees and independent subsets in unicyclic graphs and unicyclic graphs with fixed segment sequence}

\author{Eric Ould Dadah Andriantiana}
\address{Eric Ould Dadah Andriantiana\\
Department of Mathematics (Pure and Applied)\\
Rhodes University, PO Box 94\\
6140 Grahamstown\\
South Africa}
\email{E.Andriantiana@ru.ac.za}

\author{Hua Wang}
\address{Hua Wang\\
Department of Mathematical Sciences \\
Georgia Southern University \\
Statesboro, GA 30460, USA
}
\email{hwang@georgiasouthern.edu}
\thanks{This work was supported by grants from the Simons Foundation (\#245307) and the National Research Foundation of South Africa (grant 96310).}

\subjclass[2010]{05C38, 05C35, 05C69, 05C70}
\keywords{Unicyclic graphs; segment sequence; subtrees; Wiener index; independent sets; matchings}

\begin{abstract}
In the study of topological indices two negative correlations are well known: that between the number of subtrees and the Wiener index (sum of distances), and that between the Merrifield-Simmons index (number of independent vertex subsets) and the Hosoya index (number of independent edge subsets). That is, among a certain class of graphs, the extremal graphs that maximize one index usually minimize the other, and vice versa. In this paper, we first study the numbers of subtrees in unicyclic graphs and unicyclic graphs with a given girth, further confirming its opposite behavior to the Wiener index by comparing with known results.  We then consider the unicyclic graphs with a given segment sequence and characterize the extremal structure with the maximum number of subtrees. Furthermore, we show that these graphs are not extremal with respect to the Wiener index. We also identify the extremal structures that maximize the number of independent vertex subsets among unicyclic graphs with a given segment sequence, and show that they are not extremal with respect to the number of independent edge subsets. These results may be the first examples where the above negative correlation failed in the extremal structures between these two pairs of indices.
\end{abstract}

\maketitle

\section{Introduction}
The study of topological indices and the extremal graphs that maximize or minimize them have attracted much attention in the recent years. In particular, the {\it number of subtrees} $n(G)$ of a graph $G$ has been found to be ``negatively correlated'' to the {\it Wiener index} \cite{wiener1947structural}
$$
\W(G)=\sum_{u,v\in V(G)}d_G(u,v) 
$$
where $d_G(u,v)$ denotes the distance between $u$ and $v$ in $G$. This negative correlation was observed as among certain categories of graphs, the extremal structures that maximize $n(G)$ also minimizes $\W(G)$ and vice versa. In \cite{wagner2007} the correlations between a number of pairs of graph invariants were examined, $n(G)$ and $\W(G)$ were found to be the most (negatively) correlated. Along this line extremal graphs with respect to $n(G)$ \cite{zhang12,zhang2015,sze05,DadahAndriantiana2013} have been found to coincide with those with respect to $\W(G)$ \cite{schmuck2012greedy,zhang2015,wang2008trees,zhang2008wiener}
in many different classes of graphs.

Another pair of such ``negatively correlated'' graph invairants are the {\it Merrifield-Simmons index} \cite{merrifield1989topological}
$$
\sigma(G)=|\{B\subseteq V(G):\{\{u,v\}:u,v\in B\}\cap E(G)=\emptyset\}|
$$
and the {\it Hosoya index} \cite{hosoya1971topological}  
$$
\Z(G)=|\{B\subseteq E(G): e\cap e'=\emptyset\text{ for any } e\neq e'\text{ in } B\}|.
$$
They count the number of independent vertex subsets and the number of independent 
edge subsets, respectively. The extremal graphs with respect to $\Z(\cdot)$ and $\sigma(\cdot)$ are shown to coincide 
in several classes of graphs (that is, what maximizes one minimizes the other). For details we recommend the survey \cite{Wagner2010323} and the references therein.

A unicyclic graph is a connected graph that contains exactly one cycle. As many of the earlier extremal problems for these graph invariants were considered for various classes of trees, examining extremal problems for unicyclic graphs seems to be the natural next step. 
In the case of the Wiener index, some of such results have already been reported \cite{du2010minimum,Tan20161,Tan20151,Ren2013768}. 

We denote by $C_n,S_n$ and $P_n$ the $n$-vertex cycle, star and path, respectively.  For $n\geq 3$, let $US_n$ be the unicyclic graph obtained by adding an edge to join two leaves of $S_n$, and $UP_n$ be the unicyclic graph obtained by identifying a vertex of $C_3$ and an end of $P_{n-2}$ (Figure~\ref{Fig:US_UP}). We will first characterize the extremal structures among all unicyclic graphs that maximize or minimize the number of subtrees.

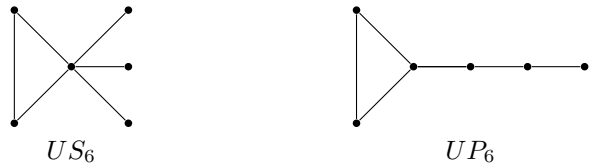
\begin{figure}[!ht]
\centering
    \begin{tikzpicture}[scale=0.75]
    \node[fill=black,circle,inner sep=1pt] (t1) at (0,0) {};
    \node[fill=black,circle,inner sep=1pt] (t2) at (-1,-1) {};
    \node[fill=black,circle,inner sep=1pt] (t3) at (-1,1) {};
    \node[fill=black,circle,inner sep=1pt] (t4) at (1,1) {};
    \node[fill=black,circle,inner sep=1pt] (t5) at (1,0) {};
    \node[fill=black,circle,inner sep=1pt] (t6) at (1,-1) {};
    \node at (0,-1.5) {$US_6$};
    \draw (t6)--(t1)--(t2)--(t3)--(t1)--(t4);
    \draw (t1)--(t5);
    \node[fill=black,circle,inner sep=1pt] (t11) at (6+0,0) {};
    \node[fill=black,circle,inner sep=1pt] (t12) at (6+-1,-1) {};
    \node[fill=black,circle,inner sep=1pt] (t13) at (6+-1,1) {};
    \node[fill=black,circle,inner sep=1pt] (t14) at (6+2,0) {};
    \node[fill=black,circle,inner sep=1pt] (t15) at (6+1,0) {};
    \node[fill=black,circle,inner sep=1pt] (t16) at (6+3,0) {};
    \node at (7,-1.5) {$UP_6$};
    \draw (t16)--(t14)--(t15)--(t11)--(t12)--(t13)--(t11);
    \draw (t11)--(t15);
     \end{tikzpicture}
\caption{The graphs $US_6$ and $UP_6$.}
\label{Fig:US_UP}
\end{figure}

\begin{theo}\label{theo:uni}
For any $n$-vertex unicyclic graph $G$ with $n\geq 3$, we have
$$
n(US_n)\geq n(G)\geq n(UP_n).
$$
\end{theo}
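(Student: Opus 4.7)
The argument rests on a decomposition of $n(G)$ with respect to the unique cycle. Write the cycle as $C=v_1v_2\cdots v_gv_1$ and let $T_i$ be the component of $G\setminus E(C)$ containing $v_i$. Set $a_i$ to be the number of subtrees of $T_i$ containing $v_i$, and $b_i=n(T_i)-a_i$. Since every subtree $S$ of $G$ is itself a tree, $V(S)\cap V(C)$ induces a proper (possibly empty) subpath $P$ of $C$; once $P$ is fixed, $S$ is determined by a choice of subtree of $T_i$ through $v_i$ for each $v_i\in V(P)$. Classifying subtrees of $G$ accordingly yields the identity
\begin{equation*}
n(G)\;=\;\sum_{i=1}^{g}b_i\;+\;\sum_{\emptyset\neq P\subsetneq C}\,\prod_{v_i\in V(P)}a_i,
\end{equation*}
where $P$ ranges over the $g(g-1)$ nonempty proper subpaths of $C$. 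This identity is the main engine of the proof.

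Both inequalities are proved in two phases: (A) fix the girth $g$ and identify the within-girth extremizer, then (B) optimize over $g$. For the upper bound, phase (A) targets the ``sunflower'' graph $H_{n,g}$ obtained from $C_g$ by attaching $n-g$ pendants to a single cycle vertex. Two local moves suffice: (i) replacing any branch $T_i$ by a star of the same order rooted at $v_i$ cannot decrease $n(G)$, since it is classical that stars simultaneously maximize both $a_i$ and $n(T_i)$ among rooted trees of fixed order, and the exponential gain in $a_i$ dominates any shift in $b_i$ through the arc-product sum; (ii) merging two pendant stars at distinct cycle vertices into a single pendant star at one cycle vertex, effected by one-leaf-at-a-time transfers, also cannot decrease $n(G)$. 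Phase (B) is a direct computation: using the identity above one finds that $n(H_{n,g})$ is strictly decreasing in $g$, forcing $g=3$ and identifying $US_n$. The lower bound follows by a symmetric scheme: linearize each branch into a path attached at an endpoint (paths minimize both $a_i$ and $n(T_i)$) and amalgamate multiple pendant paths at distinct cycle vertices into one long pendant path, yielding the ``lollipop'' $L_{n,g}$; a final comparison across $g$ then drives the girth down to $3$ and identifies $UP_n$.

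\noindent\textbf{Main obstacle.} The consolidation step (ii) and its analogue for the lower bound are the most delicate, because transferring vertices between two branches perturbs every arc-product $\prod_{v_i\in V(P)}a_i$ simultaneously and in different ways (some arcs contain both affected factors, others only one), and the $b_i$-contributions can pull against the $a_i$-contributions. I would handle these by one-vertex-at-a-time moves, expressing the change in $n(G)$ as a sum of arc-by-arc contributions whose signs can be controlled via the concentration (upper bound) or dispersion (lower bound) of the remaining $a_i$'s. Once the within-girth form is fixed, the girth-optimization in phase (B) reduces to elementary inequalities in $n$ and $g$ that can be verified directly against the closed forms $n(US_n)=3\cdot 2^{n-3}+n$ and $n(UP_n)=n(n+1)/2$ that drop out of the identity.
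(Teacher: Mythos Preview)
Your plan takes a genuinely different route from the paper. The paper's proof is a short induction on $n$: if $G$ has a leaf $v$ with neighbour $v'$, then $G-v$ is unicyclic on $n-1$ vertices and
\[
n(G)=n(G-v)+n(v',G-v)+1,
\]
so one needs only $n(c,US_{n-1})\geq n(v',G-v)\geq n(\nu',UP_{n-1})$, where $c$ is the branching vertex of $US_{n-1}$ and $\nu'$ the neighbour of the leaf in $UP_{n-1}$. The upper bound on $n(v',G-v)$ follows because $n(c,US_{n-1})=n(US_{n-1})-n$, which dominates $n(v',G-v)\leq n(G-v)-n$ by the induction hypothesis; the lower bound is a direct count. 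The leafless case $G=C_n$ is handled by explicit formulas. No cycle decomposition, no within-girth optimisation, no consolidation moves.

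Your structural approach is viable in principle, but note that your Phase~(A) is essentially Theorem~\ref{theo:unigir} (extremal graphs with given girth), which the paper proves separately and again by induction rather than by your consolidation argument. So you are not saving work; you are doing strictly more. Several details also need repair:
\begin{itemize}
\item The arc count is $g^2$, not $g(g-1)$: for each $1\le k\le g$ there are $g$ subpaths of $C_g$ on $k$ vertices (including the $g$ Hamiltonian paths, which do correspond to subtrees of $G$ meeting every cycle vertex).
\item Your identity overcounts the empty subtree $g$ times via $\sum b_i$.
\item Both closed forms are wrong: in fact $n(US_n)=3\cdot 2^{n-2}+n+1$ and $n(UP_n)=(n^2+7n-10)/2$ (check $n=4$, where $US_4=UP_4$ and the value is $17$).
\item The ``exponential gain in $a_i$ dominates any shift in $b_i$'' claim in step~(i) is not automatic: replacing $T_i$ by a star can strictly decrease $b_i$, and you must argue that the arc-product sum compensates. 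For the lower bound the issue is reversed: replacing $T_i$ by a path rooted at an end increases $b_i$, and you need the drop in $a_i$ to win.
\end{itemize}
The consolidation step you flag as the main obstacle is indeed the crux of your approach, and your one-leaf-at-a-time sketch would need a careful sign analysis across all $g^2$ arcs. The paper's induction bypasses all of this.
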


Generalizing the notations $US_n$ and $UP_n$, we define $US_n^l$ to be the graph obtained 
by attaching $n-l$ pendent vertices to one vertex of $C_{l}$ and $UP_n^l$ to be obtained by attaching 
a pendent path of length $n-l$ to a vertex of $C_l$. It is easy to see that $US_n=US_n^3$ and 
$UP_n=UP_n^3.$ Now for unicyclic graphs with a given girth we have the following.

\begin{theo}\label{theo:unigir}
If $G$ is an $n$-vertex unicyclic graph with girth $l$, then 
$$
n(US_n^l)\geq n(G)\geq n(UP_n^l)
$$
for any integer $n\geq l\geq 3.$
\end{theo}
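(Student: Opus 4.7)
The plan is to reduce an arbitrary $n$-vertex unicyclic graph $G$ of girth $l$ to $US_n^l$ (for the upper bound) or to $UP_n^l$ (for the lower bound) by two monotone transformations.

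First, I would decompose $G$ as the cycle $C_l$ with rooted trees $T_i$ attached at each cycle vertex $v_i$ (so $T_i\cap C_l=\{v_i\}$). Let $f_i$ denote the number of subtrees of $T_i$ that contain $v_i$. Since a subtree of $G$ cannot contain the whole cycle, it either lies entirely inside some $T_i$ or its intersection with $C_l$ is a proper arc $A$; in the latter case it is uniquely determined by $A$ together with, for each $v_k\in A$, a subtree of $T_k$ containing $v_k$. This yields
$$
n(G)=\sum_{i=1}^{l} n(T_i)+\sum_{A}\prod_{v_k\in A} f_k,
$$
where $A$ ranges over the proper arcs of $C_l$ (paths with $1$ to $l-1$ edges).

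Step~1 (branch shape). Fixing $|T_i|$ and all other branches, the formula above is linear in $n(T_i)$ and $f_i$ with nonnegative coefficients. Among rooted trees of a given size, both $n(T)$ and the number of subtrees through the root are simultaneously maximized by the star centered at the root and simultaneously minimized by the path with the root as an endpoint. Consequently, each $T_i$ may be reshaped into $S_{|T_i|}$ centered at $v_i$ without decreasing $n(G)$ (for the upper bound) or into $P_{|T_i|}$ with endpoint $v_i$ without increasing $n(G)$ (for the lower bound).

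Step~2 (concentration). Setting $m_i:=|T_i|-1$, so $\sum m_i=n-l$, the main obstacle is to show that if $m_i,m_j>0$, then a suitably oriented \emph{merge} -- moving all non-cycle vertices of $T_j$ onto $v_i$ or vice versa -- strictly increases $n(G)$ in the star case and strictly decreases it in the path case. To carry this out I would expand the arc sum bilinearly in $f_i,f_j$:
$$
\sum_{A}\prod_{v_k\in A} f_k=f_if_j\,R_{ij}+f_i\,R_i+f_j\,R_j+R_0,
$$
where $R_{ij},R_i,R_j,R_0$ collect the arcs containing both, only $v_i$, only $v_j$, or neither of $v_i,v_j$ (with the $f_i,f_j$ factors stripped). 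The merge $j\to i$ sends $(f_i,f_j)$ to $(f_if_j,1)$ in the star case (where $f_k=2^{m_k}$) and to $(f_i+f_j-1,1)$ in the path case (where $f_k=m_k+1$). A direct computation followed by factoring yields
$$
\Delta n(G)=(f_j-1)\bigl[f_i(1+R_i)-1-R_j\bigr]\qquad(\text{star})
$$
and
$$
\Delta n(G)=m_j\bigl[m_i(1-R_{ij})+R_i-R_j\bigr]\qquad(\text{path}).
$$
The star bracket is strictly positive whenever $R_i\ge R_j$ (using $f_i\ge 2$), while the path bracket is strictly negative whenever $R_i\le R_j$ (using $R_{ij}\ge 2$, which holds for $l\ge 3$ since the $l$ arcs of length $l-1$ alone contribute at least $1$ each to $R_{ij}$). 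The opposite inequality in each setting is handled by swapping the roles of $i$ and $j$. Iterating the merge until only a single $m_k$ is nonzero produces $US_n^l$ or $UP_n^l$; combined with Step~1 this completes the proof.
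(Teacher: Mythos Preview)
Your argument is correct and genuinely different from the paper's. The paper does not argue via the cycle decomposition at all: instead it proves the stronger statement $n_k(US_n^l)\ge n_k(G)\ge n_k(UP_n^l)$ for every $k$, by induction on $n$. For $n\ge l+2$ one removes a leaf $v$ with neighbour $v'$, writes $n_k(G)=n_k(G-v)+n_{k-1}(v',G-v)$, and applies the induction hypothesis to $G-v$ together with the bounds $n_k(c,US_{n-1}^l)\ge n_k(v',G-v)\ge n_k(\nu,UP_{n-1}^l)$, which are established by direct (and short) counting.

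Your route --- Step~1 (reshape each branch to star/path, using that $n(\cdot)$ is linear in $n(T_i)$ and $f_i$ with nonnegative coefficients) and Step~2 (merge via the bilinear expansion in $f_i,f_j$) --- gives more structural insight into why the extremal graphs look the way they do, and the sign analysis of the two brackets is clean. The paper's induction, on the other hand, is shorter and delivers the finer $n_k$-inequalities for free; extending your merge computation to fixed $k$ would require more work.

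Two small remarks on presentation. First, your decomposition formula is off by the constant $l-1$ (the empty subtree is counted once in each $n(T_i)$), which is harmless since only differences are used. Second, your justification that $R_{ij}\ge 2$ actually gives $R_{ij}\ge l\ge 3$, since each of the $l$ arcs of length $l-1$ contains every cycle vertex; you may as well state the sharper bound.
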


Theorems~\ref{theo:uni} and \ref{theo:unigir}, compared with known results on the Wiener index (see for instance \cite{yu2010}), further confirming the negative correlation between $n(G)$ and $\W(G)$.

A path $P$ in $G$ is called a {\it segment} if the end vertices of $P$ are of degrees 1 or at least 3 in $G$, and the internal vertices of $P$ are all of degree 2 in $G$. The non-increasing sequence 
of the segment lengths in $G$ is called its {\it segment sequence}. For example, the segment 
sequences of $S_n$ and $P_n$ are $(1,1,\dots,1)$ and $(n-1)$, 
respectively. Extremal problems on trees with a given segment sequence have recently been considered \cite{lin2015segment,AWW2015,PrevPaper}. In particular, findings concerning $n(G)$ and $\W(G)$ further confirm the negative correlation between them.

Sometimes in specific arguments we may, when there is no confusion, allow a segment sequence to be not necessarily non-increasing. After all, the ordering of the entries does not change the set of segment lengths in a graph.

We denote by $\mathbb{U}(l_1,\dots,l_m)$ the set of all unicyclic graphs with segment sequence 
$(l_1,\dots,l_m)$. If $l_i\geq 3$, $U_i(l_1,\dots,l_m)$ is the element of $\mathbb{U}(l_1,\dots,l_m)$ 
whose cycle consists of one segment of length $l_i$ and the segments not on 
the cycle form a starlike tree. 
We then have the following for unicyclic graphs with a given segment sequence.

\begin{theo}
\label{Th:MainSTree}
Let $(l_1,\dots,l_m)$ be a segment sequence of an $n$-vertex unicyclic graph with $l_1\geq 3$.
For any $U\in \mathbb{U}(l_1,\dots,l_m)$, we have 
$$
n(U)\leq n(U_1(l_1,\dots,l_m)).
$$
\end{theo}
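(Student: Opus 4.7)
My plan for the proof has two parts: a reduction to canonical forms, then an explicit comparison. The key tool is a decomposition formula for $n(U)$ around the cycle. Let $C$ denote the cycle of $U$, and for each $v \in V(C)$ let $T_v$ denote the component of $U - E(C)$ containing $v$, viewed as a tree rooted at $v$; write $f(T_v, v)$ for the number of subtrees of $T_v$ containing $v$. Every subtree of $U$ either lies entirely in one $T_v$, or else intersects $C$ in a nonempty arc $A$ and is specified by choosing a rooted subtree of $T_w$ (containing $w$) for each $w \in V(A)$. Hence
\begin{equation*}
n(U) \;=\; \sum_{v \in V(C)} n(T_v) \;+\; \sum_{A}\ \prod_{w \in V(A)} f(T_w, w),
\end{equation*}
where $A$ ranges over arcs of $C$ with at least two vertices. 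In particular $n(U)$ is strictly increasing in each $n(T_v)$ and each $f(T_v, v)$ when the others are held fixed.

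The reduction step aims to show that via local segment-sequence-preserving transformations one can assume $U = U_j(l_1,\dots,l_m)$ for some $j$ with $l_j \geq 3$. If $C$ has two or more branching vertices carrying nontrivial branches, I apply a paired exchange that moves one off-cycle branch between cycle branching vertices, combined with a matching cycle-segment swap chosen to preserve the segment multiset; the displayed formula, together with the fact that consolidating branches at a common cycle vertex compounds the rooted counts $f(T_v,v)$ and increases $\sum_v n(T_v)$, implies that $n$ strictly increases. Once $C$ has a single branching vertex $v^*$, the off-cycle portion of $U$ is a tree $T^*$ rooted at $v^*$ with a fixed segment sequence; by the tree analogue from~\cite{PrevPaper} (suitably adapted to the rooted setting), both $n(T^*)$ and $f(T^*, v^*)$ are maximized when $T^*$ is starlike with center $v^*$, which is exactly the defining structure of $U_j(l_1,\dots,l_m)$.

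For the comparison step, a direct calculation on $U_j := U_j(l_1,\dots,l_m)$ yields $f(T_j', v) = \prod_{k \neq j}(l_k+1)$ and $n(T_j') = f(T_j', v) + \sum_{k \neq j}\binom{l_k+1}{2}$ for the unique nontrivial branch tree $T_j'$. Substituting into the displayed formula and splitting the arc sum into those containing $v$ and those avoiding it, everything collapses to
\begin{equation*}
n(U_j(l_1,\dots,l_m)) \;=\; l_j\cdot \frac{P-2}{2} + S, \qquad P := \prod_{i=1}^m (l_i+1),\quad S := \sum_{i=1}^m \binom{l_i+1}{2}.
\end{equation*}
Since $P$ and $S$ do not depend on $j$ and $P \geq l_1+1 \geq 4 > 2$, the right-hand side is strictly increasing in $l_j$, and is therefore maximized by $l_j = l_1$, yielding $U_1(l_1,\dots,l_m)$. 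The main obstacle is in the reduction step: naively detaching a branch from a degree-$3$ cycle vertex drops its degree to $2$ and merges two adjacent cycle segments, destroying the segment multiset, so the compensating cycle-segment swap must be arranged carefully. Verifying that a strictly $n$-increasing paired exchange is available for every non-canonical $U$, and that the tree-case extremal lemma can be applied to the rooted branch tree $T^*$ without upsetting the segment sequence, is the technical heart of the argument.
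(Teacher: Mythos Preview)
Your comparison step is correct and in fact cleaner than the paper's. The closed formula
\[
n(U_j(l_1,\dots,l_m)) \;=\; l_j\cdot\frac{P-2}{2} + S,\qquad P=\prod_i(l_i+1),\quad S=\sum_i\binom{l_i+1}{2},
\]
(counting nonempty subtrees) checks out, and since $P,S$ are symmetric in the $l_i$'s the maximizer is immediately $j=1$. The paper instead compares the $U_j$'s via a two-segment lemma (their Lemma~\ref{Lem:TwoSeg}) bootstrapped through Lemma~\ref{Lem:Cond}, plus an ad hoc special case $U_1(4,3)$ versus $U_1(3,2,2)$; your formula makes all of that unnecessary.

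The reduction step, however, has a genuine gap. You never actually define the ``paired exchange,'' and you yourself flag that ``verifying that a strictly $n$-increasing paired exchange is available for every non-canonical $U$\ldots is the technical heart of the argument.'' That is not a proof --- it is a statement of what remains to be proved. The difficulty is real: if the cycle has branching vertices $v_1,\dots,v_k$ partitioning it into cycle-segments $c_1,\dots,c_k$, then moving the last pendent branch off $v_1$ drops $\deg(v_1)$ to~$2$ and merges $c_k$ with $c_1$; there is no evident local swap that restores the multiset $\{c_1,\dots,c_k\}$ while still guaranteeing an increase in your arc-product sum $\sum_A\prod_w f(T_w,w)$. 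Your monotonicity observation (that $n(U)$ is increasing in each $n(T_v)$ and $f(T_v,v)$ separately) does not help here, because the proposed exchange changes several of these quantities in opposite directions at once.

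The paper avoids this obstacle by \emph{not} insisting on segment-sequence preservation at every step. It first consolidates all pendent paths at one cycle vertex (Lemma~\ref{Lem:FG_StarLike}), which may merge cycle segments and leave $\mathbb{U}(l_1,\dots,l_m)$; it then repeatedly shortens the cycle by pulling a segment out as a new pendent path (Lemma~\ref{Lem:TwoSegCn}), each step increasing $n$; finally, when only short segments remain in the cycle, a brief case analysis (using Lemmas~\ref{Lem:Cond}, \ref{Lem:TwoSeg}, \ref{Lem:SpecialCase}) swaps the cycle with the longest segment and re-splits leftovers to land back in $\mathbb{U}(l_1,\dots,l_m)$. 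This ``leave and return'' route is what makes the reduction go through. If you want to salvage your approach, you would need either a concrete description of the paired exchange together with a proof that it increases $n$, or a direct inequality comparing your arc-product expansion for a multi-branching-vertex $U$ against that for $U_j$ --- neither of which is supplied.
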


\begin{rem}
\label{Rem:H1H2}
It is interesting to note that the analogue of Theorem~\ref{Th:MainSTree}, with respect to the Wiener index, is not true. For instance, $U_1(4,4,1,1)$ does not minimize the Wiener index among all graphs in $\mathbb{U}(4,4,1,1)$ since
$$
\W(H_1)=118<120=\W(U_1(4,4,1,1))
$$
where $H_1$ is as shown in Figure~\ref{Fig:CounterE_W}.

\begin{figure}[ht!]
\centering
\begin{tikzpicture}[scale=.8]
\node[fill=black,circle,inner sep=1pt] (t0) at (0,0) {}; 
\node[fill=black,circle,inner sep=1pt] (t1) at (1,0) {};
\node[fill=black,circle,inner sep=1pt] (t2) at (2,1) {};
\node[fill=black,circle,inner sep=1pt] (t3) at (3,1) {};
\node[fill=black,circle,inner sep=1pt] (t4) at (4,1) {};
\node[fill=black,circle,inner sep=1pt] (t5) at (5,0) {};
\node[fill=black,circle,inner sep=1pt] (t6) at (6,0) {};
\node[fill=black,circle,inner sep=1pt] (t7) at (4,-1) {};
\node[fill=black,circle,inner sep=1pt] (t8) at (3,-1) {};
\node[fill=black,circle,inner sep=1pt] (t9) at (2,-1) {};
\draw (t0)--(t1)--(t2)--(t3)--(t4)--(t5)--(t7)--(t8)--(t9)--(t1);
\draw (t5)--(t6);
\node[fill=black,circle,inner sep=1pt] (h1) at (9-1.5,-1) {};
\node[fill=black,circle,inner sep=1pt] (h2) at (8-1.5,0) {};
\node[fill=black,circle,inner sep=1pt] (h3) at (9-1.5,1) {};
\node[fill=black,circle,inner sep=1pt] (h4) at (10-1.5,0) {};
\node[fill=black,circle,inner sep=1pt] (h5) at (10-1.5,1) {};
\node[fill=black,circle,inner sep=1pt] (h6) at (10-1.5,-1) {};
\node[fill=black,circle,inner sep=1pt] (h7) at (11-1.5,0) {};
\node[fill=black,circle,inner sep=1pt] (h8) at (12-1.5,0) {};
\node[fill=black,circle,inner sep=1pt] (h9) at (13-1.5,0) {};
\node[fill=black,circle,inner sep=1pt] (h10) at (14-1.5,0) {};
\draw (h4)--(h3)--(h2)--(h1)--(h4)--(h7)--(h8)--(h9)--(h10);
\draw (h5)--(h4)--(h6);
\node at (3,-1.5) {$H_1$};
\node at (9.5,-1.5) {$H_2=U_1(4,4,1,1)$};
     \end{tikzpicture} 
\caption{The graphs $H_1$ and $H_2=U_1(4,4,1,1)$ in Remark \ref{Rem:H1H2}.}
\label{Fig:CounterE_W}
\end{figure}
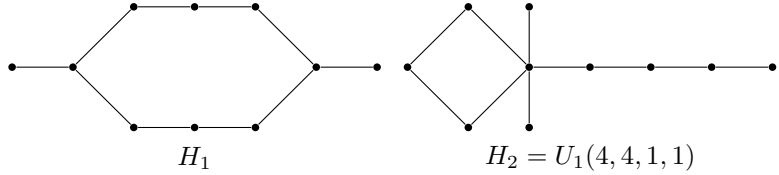

In other words, we have one of very few cases of extremal structures where the negative correlation between $n(G)$ and $\W(G)$ fails to hold. 
\end{rem}

When every segment is too short to form a cycle on its own, that is, when $l_1\leq 2$ where $U_1(l_1,\dots,l_m)$ is not a simple graph, the extremal unicyclic graph is  
$$U_{l_i,l_j}(l_{\sigma(1)},\dots,l_{\sigma(t)};l_{\sigma(t+1)},\dots,l_{\sigma(m-2)}),$$
where $\sigma:\{1,2,\dots,m-2\}\longrightarrow \{1,2,\dots,m\}\setminus\{i,j\}$ is the natural bijection that preserves the numerical ordering. This graph is obtained from the cycle $C_{l_i+l_j}$ by attaching $t$ pendent paths of lengths $l_{\sigma(1)},\dots,l_{\sigma(t)}$ at one vertex $v$ and $m-t-2$ pendent paths of lengths $l_{\sigma(t+1)},\dots,l_{\sigma(m-2)}$ to another vertex so that the graph has the segment sequence $(l_1,\dots,l_m)$. For example the graph $H_1$ in Figure \ref{Fig:CounterE_W} is $U_{4,4}(1;1).$ 
Noting that $U_{l_i,l_j}(l_{\sigma(1)},\dots,l_{\sigma(t)};l_{\sigma(t+1)},\dots,l_{\sigma(m-2)})$ fails to be a simple graph if $l_i=l_j=1$, the extremal unicyclic structure may also be the graph $U^1_n$, obtained by attaching a pendent vertex to each of two vertices of $C_3$ and $n-5$ pendent vertices to the third vertex.
\begin{theo}
\label{Th:Short_n}
Let $(l_1,\dots,l_m)$ be a segment sequence with $l_1 \leq 2$ and $H \in \mathbb{U}(l_1,\dots,l_m)$:
\begin{itemize}
\item[i)] If $l_1=2$ and $m\geq 4$, then 
$$
n(H) \leq n(U_{l_1,l_2}(l_3,\dots,l_{m-1};l_m)).
$$
\item[ii)] If $l_1=1$ and $m\geq 6$, then 
$$
n(H) \leq n(U^1_m).
$$
\end{itemize}
\end{theo}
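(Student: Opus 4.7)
The plan is to prove both parts of Theorem~\ref{Th:Short_n} by constructing local transformations on an arbitrary $H \in \mathbb{U}(l_1, \ldots, l_m)$ that preserve the segment sequence and strictly increase $n(H)$, then iterating until the claimed extremal structure is reached. Throughout, the main tool is the rooted-subtree identity: if $G$ is obtained by identifying a vertex $v$ of $G_1$ with a vertex $v$ of $G_2$, then the number of subtrees of $G$ containing $v$ satisfies $n(G, v) = n(G_1, v) \cdot n(G_2, v)$, and hence $n(G) = n(G_1) + n(G_2) - n(G_1, v) - n(G_2, v) + n(G_1, v)\, n(G_2, v)$. Push-and-swap arguments reduce to comparing these rooted counts.

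For part~(i), because $l_1 = 2$ every $H$ has at least two branching vertices on its cycle (a single segment of length $2$ cannot be a cycle in a simple graph). We proceed in three reductions. First, if the cycle carries three or more branching vertices, then consolidating the non-cycle segments of an intermediate branching vertex $w$ into a neighbor $u$ on the cycle strictly increases $n(H)$; this follows directly from the multiplicative rooted-count identity above. Second, with exactly two branching vertices and the cycle split into arcs of lengths $l_i$ and $l_j$, a swap between an arc and a longer non-cycle segment strictly increases $n(H)$, forcing $\{i,j\} = \{1,2\}$. Third, a greedy argument applied to the distribution of the remaining segments $l_3, \ldots, l_m$ (each branching vertex receiving at least one) forces $m-3$ of them onto one vertex, and a final swap identifies the lone segment as the shortest, $l_m$. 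Together these yield $H = U_{l_1, l_2}(l_3, \ldots, l_{m-1}; l_m)$.

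For part~(ii), write $H = U^l_{a_1, \ldots, a_l}$ for the unicyclic graph with cycle $C_l$ and $a_i \geq 1$ pendants at the $i$-th cycle vertex. Classifying subtrees by their maximal subpath intersection with the cycle yields
$$
n(U^l_{a_1, \ldots, a_l}) = \sum_{i=1}^l a_i + \sum_{i=1}^l 2^{a_i} + \sum_{k=1}^{l-1} \sum_{i=1}^l \prod_{j=0}^k 2^{a_{i+j}},
$$
with indices taken mod $l$. For $l \geq 4$, we construct $H' \in \mathbb{U}(1, \ldots, 1)$ of cycle length $l-1$ by removing one cycle vertex $v_i$ from the cycle (delete its two cycle edges and add the chord $v_{i-1} v_{i+1}$) and reattaching $v_i$ together with its former pendants as fresh separate pendants of a chosen cycle neighbor, so that every remaining cycle vertex still carries at least one pendant. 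A term-by-term comparison of the above decomposition for $H$ and $H'$ shows $n(H') > n(H)$, and iterating reduces to $l = 3$. With $l = 3$ the formula specializes to
$$
n(U^3_{a,b,c}) = (a+b+c) + (2^a + 2^b + 2^c) + (2^{a+b} + 2^{b+c} + 2^{c+a}) + 3 \cdot 2^{a+b+c},
$$
and the convexity of $2^x$ shows immediately that subject to $a+b+c = n-3$ and $a,b,c \geq 1$ the maximum is attained at the most skewed distribution $(n-5, 1, 1)$, i.e.\ at $U_n^1$.

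The main obstacle is the cycle-shortening step in part~(ii): the reattachment must be engineered so that no degree-$2$ vertex is created (every remaining cycle vertex retains at least one pendant), and the inequality $n(H') > n(H)$ must be verified uniformly. The delicate regime is when many $a_i$ equal $1$, where the cycle-decompositions of $H$ and $H'$ nearly balance; the argument then hinges on the fact that shortening the cycle consolidates pendants at one vertex, producing enlarged subpath products $\prod 2^{a_{i+j}}$ that strictly dominate the subpath terms lost from the removed cycle vertex.
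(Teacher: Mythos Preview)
Your overall plan differs from the paper's: the paper does a case analysis on the girth $g$, deliberately passing through graphs \emph{outside} $\mathbb{U}(l_1,\dots,l_m)$ (via Lemmas~\ref{Lem:FG_StarLike}, \ref{Lem:TreeTrans}, \ref{Lem:TwoSegCn} and Remark~\ref{Rem:Short_S}) and then restoring the correct segment sequence at the end by explicit small-case computations; you instead try to stay inside $\mathbb{U}(l_1,\dots,l_m)$ throughout via sequence-preserving moves.

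The problem is that your first reduction in part~(i) does not preserve the segment sequence. When you move all pendent segments from an intermediate cycle branching vertex $w$ to a cycle neighbour $u$, the vertex $w$ drops to degree $2$, and the two cycle arcs meeting at $w$ merge into a single longer segment. With $l_1\le 2$ every arc has length $1$ or $2$, so two arcs of length $1$ become one of length $2$, or a $1$ and a $2$ become a $3$; either way the resulting graph is no longer in $\mathbb{U}(l_1,\dots,l_m)$, and your chain breaks at the first step. Reductions~2 and~3 are also asserted without proof (why exactly does swapping an arc with a longer pendent path increase $n$?), but they are moot once reduction~1 fails. The paper gets around this precisely by \emph{allowing} the segment sequence to change mid-argument and repairing it only at the end with concrete numerical comparisons such as $n(U_{2,2}(l,1))>n(U_{2,1}(l,2))$.

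In part~(ii) the enumeration formula and the $l=3$ convexity argument are fine, but the key cycle-shortening inequality $n(H')>n(H)$ is not proved: you call it a ``term-by-term comparison'' and then immediately concede it is the ``main obstacle'' and only describe what the argument would hinge on. That is not a proof. The paper sidesteps the issue entirely: for $g\ge 5$ it bounds $n(H)$ by $n(U_1(5,1,\dots,1))$ and then compares $C_5$ with $U_{2,1}(1;1)$ via Lemma~\ref{Lem:Cond}; for $g=4$ a single closed-form identity suffices; and $g=3$ is just Lemma~\ref{Lem:GraphTrans}.
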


Now let $\mathbb{U}_{n,m}$ denote the set of unicyclic graphs with $n$ vertices and $m$ 
segments. Again we have several possible extremal structures that maximize the number of subtrees, depending on the value of $n$ and $m$.

\begin{theo}\label{theo:segnum}
Let $G\in \mathbb{U}_{n,m}$ for some integers $n\geq 3$ with $n\geq m$:
\begin{itemize}
\item[i)] If $n\geq m+2$, then 
$$
n(G)\leq n(U_1(l_1,\dots,l_m)),
$$
where $(l_1,\dots,l_m)$ is a segment sequence of $n$-vertex unicyclic graphs with $2l_i+1\leq l_1\leq 2l_i+3$ for any $i\neq 1.$ 
%with equality if and only if $G=U_1(l_1,\dots,l_m)$. 
\item[ii)] If $n=m+1$, then $n(G)\leq n(U_{2,1}(\hspace{-0.1cm}\underbrace{1\dots,1}_{m-3\text{ times}\tiny}\hspace{-0.1cm};1))$.
\item[iii)] If $n=m$, then $n(G)\leq n(U^1_n)$.
\end{itemize}
\end{theo}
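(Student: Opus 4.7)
My plan is to leverage Theorems~\ref{Th:MainSTree} and \ref{Th:Short_n} to pass from graphs to segment sequences, and then to optimize combinatorially over such sequences. Specifically, decompose $\mathbb{U}_{n,m}=\bigcup\mathbb{U}(l_1,\dots,l_m)$ as a disjoint union over non-increasing positive integer sequences with $l_1+\cdots+l_m=n$; on each piece the subtree maximizer is already pinned down by one of the previous theorems, so only the combinatorial optimization across sequences remains.

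Cases~(ii) and~(iii) are essentially immediate since the arithmetic forces a unique segment sequence. When $n=m$, the only sequence is $(1,\dots,1)$, so Theorem~\ref{Th:Short_n}(ii) returns $U^1_n$. When $n=m+1$, the only sequence is $(2,1,\dots,1)$, so Theorem~\ref{Th:Short_n}(i) returns $U_{2,1}(1,\dots,1;1)$.

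The substance is case~(i), where $n\ge m+2$ admits many segment sequences including some with $l_1\ge 3$. For these, Theorem~\ref{Th:MainSTree} reduces matters to maximizing $n(U_1(l_1,\dots,l_m))$, for which I would first derive the closed form
$$n(U_1(l_1,\dots,l_m))=\binom{l_1}{2}+\sum_{i=2}^{m}\binom{l_i+1}{2}+\binom{l_1+1}{2}\prod_{i=2}^{m}(l_i+1)$$
by splitting subtrees according to their intersection with the attachment vertex $v$: those avoiding $v$ are counted by binomial coefficients on the paths of $U_1-v$, while those containing $v$ are determined by independently choosing an initial segment of each pendent path together with a subpath of the cycle through $v$ (giving $\binom{l_1+1}{2}$ such subpaths). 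With this formula in hand, I would run two local exchanges: (a)~moving one edge between two pendent segments $l_i,l_j$ ($i,j\neq 1$) toward balance, verifying that the gain in the product term $\binom{l_1+1}{2}\prod(l_k+1)$ outweighs the loss in the binomial sum for $l_1\ge 3$; and (b)~moving one edge between the cycle and a pendent segment, $(l_1,l_i)\mapsto(l_1-1,l_i+1)$, which strictly increases $n(U_1)$ exactly when $l_1>2l_i+3$ (and the reverse move helps when $l_1<2l_i+1$). Iterating (a) and (b) drives any sequence to one satisfying the claimed inequalities, after which a direct numerical comparison handles the handful of $l_1\le 2$ candidates supplied by Theorem~\ref{Th:Short_n}.

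The main obstacle is exchange~(b), because the binomial-sum change and the product-term change have opposite signs, so locating the exact threshold requires a clean algebraic identity. The decisive computation is
$$\binom{l_1}{2}(l_i+2)-\binom{l_1+1}{2}(l_i+1)=\tfrac{1}{2}\,l_1\bigl((l_1-1)(l_i+2)-(l_1+1)(l_i+1)\bigr)=\tfrac{1}{2}\,l_1(l_1-2l_i-3),$$
which vanishes at $l_1=2l_i+3$ and controls the sign of the product-term change. Showing that the (polynomial) loss $l_1-l_i-2$ in the binomial sum is dominated by the (exponentially large) factor $\prod_{k\neq 1,i}(l_k+1)$ multiplying the above identity once $l_1\ge 2l_i+4$ is where most of the bookkeeping lives, but with the key identity in hand the remaining inequalities reduce to a routine case check, tightest when $m=2$ where one verifies equality at the boundary $l_1=2l_i+4$ directly.
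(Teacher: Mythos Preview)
Your approach is correct and genuinely different from the paper's. Both arguments first reduce via Theorem~\ref{Th:MainSTree} to graphs of the form $U_1(l_1,\dots,l_m)$ and then optimize the segment sequence by local edge-exchanges between $l_1$ and some $l_i$. The difference lies in how the exchange inequalities are verified: the paper never writes down a global formula for $n(U_1(l_1,\dots,l_m))$; instead it compares the two-segment graphs $U_1(l_1,l_i)$ and $U_1(l_1\pm 1,l_i\mp 1)$ together with their rooted counts at the branching vertex, and then invokes Lemma~\ref{Lem:Cond} to propagate the comparison to the full $m$-segment graph. Your route---derive the closed form (up to the missing additive $+1$ for the empty subtree, which is harmless) and compute differences directly---is more elementary and makes the threshold $l_1=2l_i+3$ transparent via the identity you isolate. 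Your exchange~(a) is redundant, since the constraints $2l_i+1\le l_1\le 2l_i+3$ already force $|l_i-l_j|\le 1$, but it does no harm.

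The one place where your plan is too breezy is the $l_1\le 2$ case. For fixed $(n,m)$ with $m+2\le n\le 2m$ there is indeed exactly one such segment sequence, so ``handful'' is accurate, but the comparison of $n(U_{2,2}(\ldots))$ (from Theorem~\ref{Th:Short_n}) with the optimal $n(U_1(\ell_1,\dots,\ell_m))$ is not a finite numerical check---it is parametrized by $n$ and $m$ and requires a separate formula for $U_{2,2}(\ldots)$ plus an algebraic inequality. The paper sidesteps this entirely: rather than invoking Theorem~\ref{Th:Short_n}, it applies Lemmas~\ref{Lem:FG_StarLike}, \ref{Lem:TreeTrans}, \ref{Lem:TwoSegCn} to pass from any $l_1\le 2$ graph to a graph $U_1(l_1',\dots,l_{m-1}')$ with $l_1'\in\{3,4\}$ and one fewer segment, and then restores the $m$-th segment by splitting either the cycle ($4\to 3{+}1$) or a pendent $2$-segment ($2\to 1{+}1$), each of which strictly increases $n(\cdot)$. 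This keeps everything inside the $U_1$ family and avoids any cross-type comparison. Your plan would still go through, but expect a page of computation rather than a one-line check.
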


Similarly, for the number of independent vertex subsets, we have the following among unicyclic graphs with a given segment sequence.

\begin{theo}
\label{Th:MainInd}
Consider a segment sequence $L=(l_1,\dots,l_m)$ with $l_1 \geq 3$:
\begin{itemize}
\item If $L$ contains an even entry $l_i\geq 3$
and if $l_{i_0}$ is the smallest such entry, then
\begin{equation}
\label{Eq:SE}
\sigma(U_{i_0}(l_1,\dots,l_m))\geq \sigma(G)
\end{equation}
for any $G\in \mathbb{U}(l_1,\dots,l_m)$.
\item If all the $l_i$'s that are at least 3 are odd, then 
\begin{equation}
\label{Eq:SO}
\sigma(U_1(l_1,\dots,l_m))\geq \sigma(G)
\end{equation}
for any $G\in \mathbb{U}(l_1,\dots,l_m)$.
\end{itemize}
Equality in each of \eqref{Eq:SE} and \eqref{Eq:SO} holds if the two compared graphs are isomorphic.
\end{theo}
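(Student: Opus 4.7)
The plan is to reduce any $G\in\mathbb{U}(l_1,\dots,l_m)$ to some $U_i(l_1,\dots,l_m)$ via $\sigma$-nondecreasing transformations, and then to determine the optimal cycle segment $l_i$ by a direct Fibonacci computation. Throughout I rely on the standard identity $\sigma(H)=\sigma(H-v)+\sigma(H-N[v])$ and, when $H$ is obtained by gluing $H_1$ and $H_2$ at a common vertex $v=v_1=v_2$,
\[
\sigma(H)=\sigma(H_1-v_1)\,\sigma(H_2-v_2)+\sigma(H_1-N_{H_1}[v_1])\,\sigma(H_2-N_{H_2}[v_2]),
\]
together with the fact that $\sigma(P_k)=F_{k+2}$, where $F_n$ denotes the Fibonacci numbers with $F_1=F_2=1$.

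The first phase is a \emph{concentration lemma}: whenever $G$ has two distinct branching vertices $u\neq u'$, a pendent segment incident to one of them can be transplanted onto the other without decreasing $\sigma$. The gluing formula expresses $\sigma(G')-\sigma(G)$ as a product of a ``segment factor'' and a ``host factor'', and a judicious choice of transplant direction (determined by comparing $\sigma(H-u)/\sigma(H-N[u])$ with the analogous ratio at $u'$) makes both factors nonnegative. Iterating strictly reduces the number of branching vertices, so the process terminates at a graph whose unique branching vertex $v^*$ lies on the cycle; the cycle must then consist of a single segment through $v^*$. Thus $G$ has been transformed into $U_i(l_1,\dots,l_m)$ for some $i$ with $l_i\geq 3$.

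In the second phase, the gluing formula applied to the cycle $C_{l_i}$ and the starlike tree attached at $v^*$ yields
\[
\sigma(U_i(L))=F_{l_i+1}\prod_{j\neq i}F_{l_j+2}+F_{l_i-1}\prod_{j\neq i}F_{l_j+1}.
\]
For $l_i>l_j\geq 3$, d'Ocagne's identity $F_m F_{n+1}-F_n F_{m+1}=(-1)^n F_{m-n}$ collapses the difference to
\[
\sigma(U_i(L))-\sigma(U_j(L))=(-1)^{l_j+1}\,F_{l_i-l_j}\Bigl[\prod_{k\neq i,j}F_{l_k+2}+\prod_{k\neq i,j}F_{l_k+1}\Bigr],
\]
so its sign equals $(-1)^{l_j+1}$, i.e.\ it is positive exactly when $l_j$ is odd. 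A short case analysis then delivers the theorem: if some entry $l_j\geq 3$ is even, the smallest such $l_{i_0}$ beats every other admissible choice, while if every entry $\geq 3$ is odd, the largest, $l_1$, wins.

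The main obstacle will be the concentration step. The unicyclic constraint forces a case split on the positions of the two branching vertices (both on the cycle, only one on it, or the transplanted segment itself lying on the cycle), and each configuration requires its own application of the gluing formula. Small-length degeneracies ($l_j\in\{1,2\}$, adjacent branching vertices on the cycle, and so on) also demand separate verification. Once these have been handled, the Fibonacci comparison in the second phase is essentially mechanical.
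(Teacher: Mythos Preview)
Your Phase~2 Fibonacci comparison is correct and in fact cleaner than the paper's treatment: the paper proves the parity-dependent inequality $\sigma(P_n\cup P_{m-1})\lessgtr\sigma(P_{n-1}\cup P_m)$ by induction (its Lemma~4.5) and then assembles the comparison of $\sigma(U_{i_0})$ with $\sigma(U_i)$ term by term, whereas your single application of d'Ocagne collapses both brackets to $(-1)^{l_j+1}F_{l_i-l_j}$ in one stroke. That part is fine.

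The gap is in Phase~1. Your concentration lemma moves pendent segments from one branching vertex to another, and you conclude that the terminal graph has a single branching vertex $v^\ast$ on a cycle which ``must then consist of a single segment through~$v^\ast$''. That is true, but this single cycle-segment has length equal to the \emph{sum} of the lengths of all cycle-segments of the original $G$, not some individual $l_i$. Concretely, take $G=U_{2,2}(3,1;1)\in\mathbb{U}(3,2,2,1,1)$: the cycle has two segments of length $2$. Transplanting the pendent edge from the second branching vertex to the first drops the second vertex to degree $2$, merging the two cycle-segments into one of length $4$. The resulting graph is $U_1(4,3,1,1)\in\mathbb{U}(4,3,1,1)$, which is \emph{not} in $\mathbb{U}(3,2,2,1,1)$. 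So your assertion ``Thus $G$ has been transformed into $U_i(l_1,\dots,l_m)$ for some $i$'' is false whenever the original cycle carries more than one segment.

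This is exactly why the paper needs two further lemmas after the concentration step (its Lemma~\ref{Lem:Gather_Br}). Lemma~\ref{Lem:Pull} shows that if the cycle of the concentrated graph is longer than $3$, one can split off a pendent path of any prescribed length from it (shrinking the cycle accordingly) and strictly increase $\sigma$; iterating restores the pendent segments that were merged into the cycle. Lemma~\ref{Lem:Pull_SCase} handles the residual cases when the leftover cycle has length $3$ or $4$ and must be swapped with a genuine segment $l_i\geq 3$ while the short leftover piece is re-expanded into the appropriate collection of length-$1$ and length-$2$ segments. Neither step is automatic from your transplant move; both require a separate $\sigma$-increasing argument. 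Your outline needs an analogue of these ``pull-out'' steps before Phase~2 can be invoked.
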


\begin{rem}
Again, the analogue of Theorem~\ref{Th:MainInd} for $\Z$ does not hold. For example,
$$
\Z(U_1(6,4))=114<115=\Z(U_2(6,4)).
$$
That is, among unicyclic graphs with a given segment sequence the extremal trees that maximize $\sigma(\cdot)$ are not necessarily the same as those that minimize $\Z(\cdot)$. Thus the well known negative correlation between $\sigma(\cdot)$ and $\Z(\cdot)$ also fails to hold in $\mathbb{U}(l_1,\dots,l_m)$.
\end{rem}

Similar conclusion as Theorem~\ref{Th:Short_n} holds for the number of independent vertex subsets when all segments are short.

\begin{theo}
\label{Th:Short_n_Sigma}
Let $(l_1,\dots,l_m)$ be a segment sequence with $l_1 \leq 2$ and $H \in \mathbb{U}(l_1,\dots,l_m)$:
\begin{itemize}
\item[i)] If $l_1=2$ and $m\geq 4$, then 
$$
\sigma(H)\leq\sigma(U_{l_1,l_2}(l_4,\dots,l_{m-1};l_3)).
$$
%where $U=U_{l_1,l_2}(l_4,\dots,l_{m-1};l_3)$.
\item[ii)] If $l_1=1$ and $m\geq 6$, then 
$$
\sigma(H)\leq \sigma(U^1_m).
$$
\end{itemize}
\end{theo}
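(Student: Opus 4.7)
The plan is to parallel the strategy used for Theorem~\ref{Th:MainInd}, now adapted to the degenerate cases in which no single segment is long enough to span a cycle on its own. The principal tool is the deletion identity
$$
\sigma(G) = \sigma(G-v) + \sigma(G-N[v])
$$
together with its corollary for attaching a rooted piece $F$ (with root $r$) at a vertex $u$ of $G$:
$$
\sigma(G_u\cdot F_r) = \sigma(G-u)\sigma(F-r) + \sigma(G-N[u])\sigma(F-N[r]).
$$
From this one derives the standard comparison principle: replacing one attached piece by another, or relocating a pendant segment between two distinct branching vertices, changes $\sigma$ by a quantity whose sign is governed by a single inequality between products of $\sigma$-values of small subgraphs.

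For part (i), with $l_1=2$, I would proceed in three stages. First, within $\mathbb{U}(l_1,\ldots,l_m)$, I would show that the extremal graph has its cycle composed of exactly two segments --- the minimum possible, since no single segment forms a cycle --- by arguing that whenever the cycle contains three or more segments, one of them can be ``pulled off'' the cycle and reattached as a pendant segment at another branching vertex so that the whole segment sequence is preserved and $\sigma$ strictly increases. Second, among two-segment cycle configurations, a direct swap argument identifies the two cycle segments as the two longest, $l_1$ and $l_2$, giving cycle $C_{l_1+l_2}$. Third, iterating the pendant-concentration principle at the two branching cycle vertices collapses all pendant segments onto a single side except for one; a final attachment-inequality comparison identifies the distinguished remaining pendant segment as $l_3$. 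For part (ii), with $l_1=1$, every segment is a single edge, so every cycle vertex of $H$ must be branching (otherwise its two incident cycle edges would merge into a longer segment, contradicting $l_1=1$). The same cycle-shortening move then reduces the cycle to $C_3$, and the pendant-concentration principle concentrates the $m-3$ pendant edges at one vertex of the $C_3$, leaving the other two cycle vertices with the mandatory one pendant each --- precisely the graph $U_m^1$.

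The main obstacle is verifying the strict positivity of the $\sigma$-increments for the cycle-shortening and pendant-concentration moves in all cases. Each reduces to an inequality among products of $\sigma$-values of short paths and small cycles, and ultimately to inequalities among Fibonacci and Lucas numbers whose direction can depend on the parities of the segment lengths involved. The most delicate point is in part (i): pinning down exactly which pendant segment remains on the ``lighter'' branching cycle-vertex --- and in particular showing it must be $l_3$ rather than some $l_j$ with $j\geq 4$ --- is where the Merrifield--Simmons extremum genuinely diverges from its subtree-count counterpart in Theorem~\ref{Th:Short_n}, and where careful parity bookkeeping is required.
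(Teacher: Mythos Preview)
Your overall plan differs from the paper's, and its central step --- the ``cycle-shortening'' move in which a segment is pulled off a cycle with three or more segments while preserving the segment sequence and strictly increasing $\sigma$ --- is not supported by any of the available lemmas. Lemmas~\ref{Lem:Pull} and~\ref{Lem:Pull_SCase}, which carry out this kind of cycle reduction in the proof of Theorem~\ref{Th:MainInd}, both require a single segment of length at least~$3$ to serve as the new cycle; that is precisely what is unavailable when $l_1\le 2$. Lemma~\ref{Lem:Gather_Br} only relocates pendants between two fixed vertices of a fixed host graph and never changes the cycle length. So your pull-off move, together with the subsequent ``swap'' identifying $l_1,l_2$ as the cycle segments, would have to be established from scratch; since the relevant comparisons are between products of Fibonacci numbers whose direction depends on parities, this is not a routine verification but essentially the entire content of the theorem.

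The paper sidesteps this by \emph{not} staying inside $\mathbb{U}(l_1,\dots,l_m)$. After using Lemma~\ref{Lem:Gather_Br} to push all pendants onto the cycle with at most one branching vertex of degree exceeding~$3$, it splits on the girth $g$. When $g\ge l_1+l_2+l_3$ it passes through the out-of-class intermediate $U_1(l_1+l_2+l_3,l_4,\dots,l_m)$, where the long-segment machinery does apply, and then compares that graph directly to the target by an explicit formula. For the remaining small girths $g\in\{3,4,5\}$ it carries out several explicit inequalities, one per subcase (with further subcases on $l_2\in\{1,2\}$); the computation~\eqref{Eq:g4l2} is exactly where the placement of $l_3$ rather than $l_m$ on the light side is decided, confirming your intuition that this is the delicate point. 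For part~(ii) the paper does not iterate any cycle-reduction move at all: it simply deletes a degree-$3$ branching vertex from an arbitrary $G$ and from $U^1_m$ and compares the two resulting expressions $\sigma(\,\cdot\,-u)+\sigma(\,\cdot\,-[u])$ termwise.
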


Our last theorem provides a characterization of the unicyclic graph with fixed number of segments and maximum $\sigma(\cdot)$.

\begin{theo}\label{theo:segnumSig}
Let $G\in \mathbb{U}_{n,m}$ for some positive integers $n\geq m$.
\begin{itemize}
\item[i)] If $n\geq m+3$, then
$$
\sigma(U_2(\ell_1,\dots,\ell_m))\geq \sigma(G),
$$
where $\ell_1=n-m-2$, $\ell_2=4$ and $\ell_3=\dots=\ell_m=1$.  
%with equality if and only if $G=U_2(\ell_1,\dots,\ell_m)$. 
\item[ii)] If $n=m+2$, then $\sigma(U_{2,2}(\hspace{-0.1cm}\underbrace{1,\dots,1}_{m-3 \text{ times} \tiny}\hspace{-0.1cm};1))\geq\sigma(G)$. %where $l_1=\dots=l_{m-3}=1.$
\item[iii)] If $n=m+1$, then $\sigma(U_{2,1}(\hspace{-0.1cm}\underbrace{1,\dots,1}_{m-3 \text{ times} \tiny}\hspace{-0.1cm};1))\geq\sigma(G)$. %where $l_1=\dots=l_{m-3}=1.$
\item[iv)] If $n=m$, then $\sigma(U^1_n)\geq \sigma(G)$.
\end{itemize}
\end{theo}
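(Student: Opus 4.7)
The plan is to handle each of the four cases by first reducing, for every possible segment sequence of an $n$-vertex $m$-segment unicyclic graph, to the extremal graph for that sequence identified in Theorems~\ref{Th:MainInd} and~\ref{Th:Short_n_Sigma}, and then to compare these extremal graphs across the admissible segment sequences. The arithmetic constraint $l_1+\cdots+l_m=n$ drives the case split, since each segment length contributes to the edge count of the unicyclic graph.

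Cases (iii) and (iv) are essentially immediate. When $n=m$ the only possible segment sequence is $(1,\dots,1)$, so Theorem~\ref{Th:Short_n_Sigma}(ii) gives $\sigma(G)\leq\sigma(U_n^1)$; when $n=m+1$ the only possibility is $(2,1,\dots,1)$ and Theorem~\ref{Th:Short_n_Sigma}(i) applies directly. Case (ii) admits precisely two segment sequences, $(2,2,1,\dots,1)$ and $(3,1,\dots,1)$; the former is handled by Theorem~\ref{Th:Short_n_Sigma}(i) and the latter by Theorem~\ref{Th:MainInd} (the only entry $\geq 3$ is odd, so the second clause applies), and a direct computation of the two resulting $\sigma$-values shows that the $U_{2,2}$-candidate dominates.

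Case (i) is the main case. Given $G\in\mathbb{U}_{n,m}$ with segment sequence $L=(l_1,\dots,l_m)$, the reduction depends on $l_1$. If $l_1\geq 3$, Theorem~\ref{Th:MainInd} replaces $G$ by a ``starlike-with-cycle'' graph of the form $U_{i_0}(L)$ or $U_1(L)$, in which the cycle and all $m-1$ pendant segments meet at a single vertex. If $l_1\leq 2$, then $n\geq m+3$ forces $l_1=l_2=2$, and Theorem~\ref{Th:Short_n_Sigma}(i) replaces $G$ by a graph with a $4$-cycle carrying pendant paths at two of its vertices. In either sub-case the resulting candidate is determined by the multiset $L$ together with the choice of which segment (or segment pair) carries the cycle, and the remaining task is to maximize $\sigma$ across all admissible $L$ and such choices.

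For this last step I would decompose $\sigma(H)=A(H)+B(H)$ at the branching vertex $v$, where $A$ and $B$ count independent sets containing, respectively not containing, $v$; each of $A$ and $B$ factors over the branches at $v$, with a pendant path of length $\ell$ contributing consecutive Fibonacci numbers and the cycle arc contributing a fixed $\ell$-dependent combination of Fibonacci numbers. Two subordinate comparisons then suffice. First, for a fixed cycle length $\ell$ and a fixed sum of pendant-path lengths, $\sigma$ is maximized by concentrating the ``extra'' length into a single pendant path while keeping the other $m-2$ pendants of length~$1$; this reduces to a routine product inequality for Fibonacci numbers. Second, with one long pendant and $m-2$ unit pendants fixed, the optimal cycle length is $\ell=4$: the gain from shortening a cycle of length $\ell\geq 5$ (and extending the long pendant accordingly) is positive, whereas the step from $\ell=4$ to $\ell=3$ reverses sign; the $U_{2,2}$-type structure from the $l_1=2$ branch is also beaten because concentrating all pendants at a single vertex dominates the two-vertex distribution. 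The hard part is verifying the cycle-length comparison cleanly, since $U_1(n-m+1,1,\dots,1)$ with a long cycle and the $U_{i_0}$-graphs for intermediate even $l_{i_0}$ are close competitors; I would handle this by writing each candidate's $\sigma$ as a small explicit linear combination of Fibonacci numbers whose coefficients depend only on $\ell$ and $m$, and using $\sigma(C_\ell)=F_{\ell-1}+F_{\ell+1}$ together with standard recursions to check monotonicity. Combining these inequalities identifies $U_2(n-m-2,4,1,\dots,1)$ as the unique maximizer, completing case~(i).
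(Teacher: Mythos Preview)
Your treatment of cases~(ii)--(iv) matches the paper: both reduce to Theorem~\ref{Th:Short_n_Sigma} and, for case~(ii), a direct comparison of $\sigma(U_{2,2}(1,\dots,1;1))$ against $\sigma(U_1(3,1,\dots,1))$.

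For case~(i) your route genuinely differs from the paper's. You propose a two-stage optimisation: first invoke Theorems~\ref{Th:MainInd} and~\ref{Th:Short_n_Sigma} to pass to the extremal graph for each fixed segment sequence, and then maximise over all admissible sequences, handling what you call the ``hard part'' (the cycle-length comparison among the candidates $U_{i_0}(L)$, $U_1(L)$, and the $U_{2,2}$-type graphs) by explicit Fibonacci arithmetic. The paper sidesteps this entire second stage. It never fixes the segment sequence; instead it transforms $G$ directly, via Lemma~\ref{Lem:Gather_Br} and Lemma~\ref{Lem:Sliding}, into a graph $G'$ consisting of a cycle of some length $h_1$, one pendant path of length $h_2=n-m+2-h_1$, and $m-2$ pendant leaves, all attached at a single vertex $v$ (adjusting segment count by pulling edges as needed). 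The final comparison with $U_2(\ell_1,\dots,\ell_m)$ is then a single application of the deletion identity $\sigma(H)=\sigma(H-v)+\sigma(H-[v])$, where both pieces reduce to unions of paths; Lemma~\ref{Lem:Sliding_MissingEdge} (the ``sliding with a missing edge'' inequality $\sigma(P_{n-2}\cup P_2)<\sigma(P_{n-4}\cup P_4)<\cdots$) disposes of the cycle-length optimisation in one line. So the paper replaces your open-ended Fibonacci case analysis by a single prepared lemma; your approach can be made to work, but the bookkeeping you flag as the hard part is precisely what Lemma~\ref{Lem:Sliding_MissingEdge} is designed to eliminate.
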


In Section~\ref{sec:uni} we will present the proofs of Theorems~\ref{theo:uni} and \ref{theo:unigir}. We then consider the extremal structures, among unicyclic graphs with a given segment sequence or the number of segments with respect to the number of subtrees, showing Theorems~\ref{Th:MainSTree}, \ref{Th:Short_n}, \ref{theo:segnum} in Section~\ref{Sec:MSubT}. Section~\ref{Sec:MInd} is devoted to the number of independent subsets, where Theorems~\ref{Th:MainInd}, \ref{Th:Short_n_Sigma}, \ref{theo:segnumSig} are proven. We comment on our findings in Section~\ref{sec:Con}.

\section{Subtrees of unicyclic graphs}
\label{sec:uni}

First we introduce some notations and useful observations.  Let $\eta_k(G)$ be the set of $k$-vertex subtrees of $G$ and $n_k(G)=|\eta_k(G)|$. It is easy to see that $n_0(G)=1$, $n_1(G)=|V(G)|$ and $n(G)=\sum_{k\geq 0}n_k(G)$. We also define 
\begin{align*}
n_k(\mathcal{A},G)=\left|\{T\in \eta_k(G): \mathcal{A}\text{ is in }T\}\right|
\end{align*}
and 
$$
n(\mathcal{A},G)=\sum_{k\geq 0}n_k(\mathcal{A},G)
$$
where $\mathcal{A}$ can be any collection of vertices and/or edges of $G$. 

The following lemma lists some basic enumeration results.

\begin{lemma}
\label{Lem:Formulas}
For any integer $n\geq 2$, we have 
\begin{itemize}
\item $n_k(P_n)=n-k+1$ for any integer $n\geq k\geq 1$;
\item $n_k(C_n)=n$ for any integer $n\geq k\geq 1$;
\item $n_k(S_n)=\binom{n-1}{k-1}$ for any integer $n\geq k\geq 2$.
\end{itemize}
Hence 
$$
n(P_n)=\frac{n^2+n+2}{2},
$$
\begin{equation}
\label{Eq:Cn}
n(C_n)=n^2+1,
\end{equation}
and 
$$
n(S_n)=n+2^{n-1}.
$$
\end{lemma}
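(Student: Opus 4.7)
The plan is to verify each of the three per-$k$ counting formulas by direct enumeration and then obtain the totals by routine summation.

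First, for $P_n$ I would note that any $k$-vertex subtree of a path must consist of $k$ consecutive vertices along the path (otherwise the induced subgraph is disconnected), and the unique spanning subtree on any such vertex set is the subpath itself. Since there are $n-k+1$ choices of the leftmost vertex, we obtain $n_k(P_n)=n-k+1$.

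Next, for $C_n$ I would split into two cases. When $1\leq k\leq n-1$, a $k$-vertex subtree is a subpath of length $k-1$ lying along the cycle, and there are exactly $n$ such subpaths, indexed by the starting vertex. When $k=n$, a subtree must be a spanning tree of $C_n$, and these are obtained by deleting exactly one of the $n$ edges of $C_n$, again giving $n$ choices. Hence $n_k(C_n)=n$ for all $1\leq k\leq n$. For $S_n$ with $k\geq 2$, I would argue that any such subtree must contain the central vertex, since the leaves are pairwise non-adjacent and so no connected subgraph on $k\geq 2$ vertices can avoid the center. The subtree is then uniquely determined by the choice of the remaining $k-1$ leaves from the $n-1$ available, giving $\binom{n-1}{k-1}$.

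The closed-form totals then follow by summing over $k$, using $n_0(G)=1$ in each case: for $P_n$ one collects $1+\sum_{k=1}^{n}(n-k+1)=1+\tfrac{n(n+1)}{2}=\tfrac{n^2+n+2}{2}$; for $C_n$ one gets $1+n\cdot n=n^2+1$; and for $S_n$, using $n_1(S_n)=n$ and the binomial identity $\sum_{k=2}^{n}\binom{n-1}{k-1}=2^{n-1}-1$, one gets $1+n+(2^{n-1}-1)=n+2^{n-1}$. There is no real obstacle here, as the formulas are standard; the only subtlety worth flagging is in the $k=n$ case for $C_n$, where ``subtree'' must be interpreted as a spanning subgraph that happens to be a tree (equivalently, a spanning tree of the cycle), rather than an induced subgraph.
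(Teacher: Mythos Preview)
Your proposal is correct and follows essentially the same approach as the paper: subtrees of $P_n$ and $C_n$ are identified with subpaths (determined by their endpoints), and subtrees of $S_n$ on at least two vertices must contain the center and are determined by the chosen leaves. You have simply filled in the routine details that the paper omits, including the helpful remark that the $k=n$ case for $C_n$ corresponds to the $n$ spanning trees obtained by deleting a single edge.
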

\begin{proof}
The proof follows from noticing that subtrees (with at least one vertex) in $P_n$ or $C_n$ are simply subpaths (including the ones on single vertices)  decided by its end vertices, and a subtree in $S_n$ containing the center is determined by the leaves it contains. We skip the details.
\end{proof}

The following extremal result on $n_k(G)$ among trees is useful to us.

\begin{theo}[\cite{book:1200029}]
\label{Th:Tree}
For any $n$-vertex tree $T$, we have
$$
n_k(S_n)\geq n_k(T)\geq n_k(P_n)
$$
for any integer $k\geq 0.$
\end{theo}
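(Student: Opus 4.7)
The plan is to prove both inequalities simultaneously by induction on $n$, using the decomposition
$$n_k(T) = n_k(T - v) + n_k(\{v\}, T)$$
for a leaf $v$ of $T$, where $n_k(\{v\}, T)$ denotes the number of $k$-vertex subtrees of $T$ that contain $v$. Both $P_n$ and $S_n$ admit a leaf whose removal yields $P_{n-1}$ and $S_{n-1}$ respectively, so it suffices to compare the two summands on each side. The base cases ($n = 1$, or $k \leq 1$, or $k > n$) all reduce to checking $n_k(T) = n_k(P_n) = n_k(S_n)$ directly using Lemma~\ref{Lem:Formulas}.

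For the lower bound $n_k(T) \geq n_k(P_n) = n - k + 1$, pick any leaf $v$ of $T$. The induction hypothesis gives $n_k(T - v) \geq n_k(P_{n-1}) = n - k$, and the elementary observation that one can grow a connected subtree from $v$ vertex by vertex (say via BFS) to any prescribed size up to $|V(T)|$ shows $n_k(\{v\}, T) \geq 1$ whenever $1 \leq k \leq n$. Summing these yields the claim.

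For the upper bound $n_k(T) \leq n_k(S_n) = \binom{n-1}{k-1}$, the induction hypothesis gives $n_k(T - v) \leq \binom{n-2}{k-1}$. To bound $n_k(\{v\}, T)$ for $k \geq 2$, I use the fact that a $k$-subtree of $T$ containing the leaf $v$ must also contain the neighbor $u$ of $v$, hence corresponds bijectively to a connected subtree of $T - v$ of size $k - 1$ containing $u$. Rooting $T - v$ at $u$, such a subtree is an ``ancestor-closed'' vertex set of size $k-1$ containing $u$, and removing $u$ leaves a $(k-2)$-subset of $V(T) \setminus \{u, v\}$. Since $|V(T) \setminus \{u, v\}| = n - 2$ and only some $(k-2)$-subsets extend to ancestor-closed sets, we obtain $n_k(\{v\}, T) \leq \binom{n-2}{k-2}$. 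Pascal's identity then yields
$$n_k(T) \leq \binom{n-2}{k-1} + \binom{n-2}{k-2} = \binom{n-1}{k-1} = n_k(S_n).$$

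The main obstacle is precisely this auxiliary bound $n_k(\{v\}, T) \leq \binom{n-2}{k-2}$; everything else is just the induction and a Pascal-type identity. The auxiliary bound rests on the observation that in any rooted tree the subtrees through the root are parametrized by ancestor-closed subsets, whereas in the star $S_n$ rooted at the center \emph{every} subset of the remaining vertices is ancestor-closed --- which is exactly why the star attains equality. Refining the argument to characterize the equality cases would require tracking which step in the induction is strict, but the bare inequalities follow cleanly from the plan above.
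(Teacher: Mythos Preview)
The paper does not prove Theorem~\ref{Th:Tree}; it is quoted as a known result from \cite{book:1200029}. So there is no ``paper's own proof'' to compare against.

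That said, your argument is correct and self-contained. The decomposition $n_k(T)=n_k(T-v)+n_k(\{v\},T)$ for a leaf $v$ is exactly the right tool, and both halves go through as you describe. For the lower bound, the two ingredients $n_k(T-v)\ge n-k$ (induction) and $n_k(\{v\},T)\ge 1$ (grow a subtree from $v$) are clear; the boundary case $k=n$ also works since then $n_k(T-v)=0$ and the whole tree is the unique $k$-subtree through $v$. For the upper bound, your key observation is that a $k$-subtree through the leaf $v$ is determined by a $(k-1)$-subtree of $T-v$ through $u$, and such a subtree is in turn determined by its vertex set, hence by a $(k-2)$-subset of $V(T)\setminus\{u,v\}$. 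This injection gives $n_k(\{v\},T)\le\binom{n-2}{k-2}$, and Pascal's identity finishes. The ``ancestor-closed'' language is not strictly necessary---the injectivity comes simply from the fact that in a tree a connected subgraph is determined by its vertex set---but it does no harm and makes the equality case for $S_n$ transparent.
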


Of course, Theorem~\ref{Th:Tree} implies the extremality of $n(S_n)$ and $n(P_n)$. When considering subtrees containing a particular vertex, we have the following analogue of Theorem~\ref{Th:Tree}.

\begin{lemma}\label{lem:nk}
Let $c$ be the center of $S_n$, $\nu$ an end of $P_n$, and $v$ any vertex of an  $n$-vertex tree $T$, then
$$
n_k(c,S_n)\geq n_k(v,T)\geq n_k(\nu,P_n)
$$
for any integer $k\geq 0.$
\end{lemma}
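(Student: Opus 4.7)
The plan is to establish the two inequalities independently, as each reduces to an elementary combinatorial observation about subtrees of trees.

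For the upper bound $n_k(v,T) \le n_k(c,S_n)$, I would first evaluate the right-hand side directly: since a subtree of $S_n$ containing the center $c$ is uniquely determined by the subset of leaves it contains, one has $n_k(c,S_n) = \binom{n-1}{k-1}$ for $k \ge 1$, with both sides vanishing at $k = 0$. The inequality then follows from an injection. Every $k$-vertex subtree of $T$ containing $v$ is determined by its vertex set, and the map that sends such a subtree to $V(\text{subtree}) \setminus \{v\}$ injects the collection counted by $n_k(v,T)$ into $\binom{V(T) \setminus \{v\}}{k-1}$. The codomain has cardinality $\binom{n-1}{k-1} = n_k(c,S_n)$, which yields the desired bound.

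For the lower bound $n_k(v,T) \ge n_k(\nu,P_n)$, I would begin by observing that the subtrees of $P_n$ are exactly its subpaths, and those containing the end vertex $\nu$ are precisely the initial segments of $P_n$ starting at $\nu$. Hence $n_k(\nu,P_n) = 1$ for $1 \le k \le n$ and $n_k(\nu,P_n) = 0$ otherwise. It therefore suffices to produce, for each $1 \le k \le n$ and each vertex $v \in V(T)$, at least one $k$-vertex subtree of $T$ containing $v$. I would construct such a subtree greedily: set $S_1 = \{v\}$, and if a connected vertex set $S_j$ of size $j < k$ has been built with $S_j \subsetneq V(T)$, choose any edge of $T$ with one endpoint in $S_j$ and the other in $V(T) \setminus S_j$ (such an edge exists because $T$ is connected) and let $S_{j+1}$ be $S_j$ together with that new endpoint. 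The resulting $S_k$ induces a $k$-vertex subtree of $T$ containing $v$.

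The main obstacle is essentially absent; both inequalities fall out as soon as one unwinds the definitions. The only point requiring minor attention is consistency at the boundary values $k = 0$ and $k > n$: under the convention that $n_0(G) = 1$ counts the empty subtree but $n_0(\mathcal{A},G) = 0$ whenever $\mathcal{A}$ is nonempty, all three quantities $n_k(c,S_n)$, $n_k(v,T)$, $n_k(\nu,P_n)$ vanish in these ranges, so the claimed inequalities are trivial there. The substantive content is confined to $1 \le k \le n$, which the two arguments above cover.
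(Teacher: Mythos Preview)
Your proof is correct. The lower bound argument is essentially identical to the paper's: both reduce to showing $n_k(\nu,P_n)=1$ for $1\le k\le n$ and that $T$ has at least one $k$-vertex subtree containing $v$; you simply spell out the greedy construction that the paper leaves implicit.

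The upper bound, however, is handled differently. The paper argues via the chain
\[
n_k(c,S_n)=n_k(S_n)\ge n_k(T)\ge n_k(v,T),
\]
where the first equality holds because every subtree of $S_n$ with at least two vertices contains $c$, and the middle inequality is Theorem~\ref{Th:Tree} (quoted from an external reference). Your injection into $\binom{V(T)\setminus\{v\}}{k-1}$ bypasses Theorem~\ref{Th:Tree} entirely and is self-contained; it relies only on the elementary fact that a subtree of a tree is determined by its vertex set. The paper's route is shorter given that Theorem~\ref{Th:Tree} is already available, while your route is more elementary and would stand on its own without that citation.
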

\begin{proof}
The $k=0,1$ cases are trivial. For any $k\geq 2$ and any $v \in V(T)$, we have 
\begin{align*}
n_k(c,S_n)=n_k(S_n)\geq n_k(T)\geq n_k(v,T) \geq 1 = n_k(\nu,P_n).
\end{align*}
\end{proof}

\subsection{Unicyclic graphs}

First note that there is only one unicyclic graph on 3 vertices. For $n=4$, there are 
exactly two unicyclic graphs $US_4=UP_4$ and $C_4$, with $n(US_4)=n(C_4)$.

\begin{proof}[Proof of Theorem~\ref{theo:uni}]
We proceed by induction on $n$. Assume $n(US_n) \geq n(G) \geq n(UP_n)$ for any unicyclic graph $G$ on $n$ vertices, for $n\leq l$ for some $l\geq 3.$ For such a graph $G$ we first establish an analogue of Lemma~\ref{lem:nk} for unicyclic graphs. Let $c$ be the branching vertex (a vertex of degree at least 3) of $US_n$ 
and $\nu$ be the only vertex of degree $1$ in $UP_n$ (or one vertex of the cycle if $n=3$), then for any vertex $v$ in an $n$-vertex unicyclic graph $G$ we have 
$$
n(c,US_n)=n(US_n)-n-1\geq n(G)-n -1\geq n(v,G) \geq n+3 \geq n(\nu,UP_n).
$$
If $e$ is an edge on the cycle of $G$ chosen to be closest to $v$, then there are at least $n$ subtrees containing $v$ in $G-e$, and there 
are at least three subtrees of $G$ that contain both $v$ and $e$. So the second to last inequality is valid for any $n$ (not just for $n\leq l$).

Now, assume that $G$ is a $(n=l+1)$-vertex unicyclic graph. We consider two cases:
\begin{itemize}
\item First, suppose $G$ has a pendent vertex $v$ with neighbor $v'$, let $u$ be a leaf in $US_n$ and $c$ be its 
branching vertex. Let $\nu'$ be the neighbor of $\nu$ in $UP_n$. Then $G-v$ is still a unicyclic graph and 
\begin{align*}
n(US_n)
&=n(US_n-u)+n(u,US_n)\\
&=n(US_{n-1})+n(c,US_{n-1})+1\\
&\geq n(G-v)+n(v',G-v)+1=n(G) \\
&\geq n(UP_{n-1})+n(\nu',UP_{n-1}) + 1 \\
&=n(UP_n-\nu)+n(\nu,UP_n) = n(UP_n).
\end{align*}
\item If $G$ has no pendent vertex, then $G=C_n$. Direct computation shows, for $n\geq 4$
\begin{align*}
n(US_n) & = n_0(US_n) + n_1(US_n) + n_2(US_n) + \sum_{k=3}^n n_k(US_n) \\
&=1+n +n+\sum_{k=3}^n\left( \binom{n-1}{k-1}+2\binom{n-3}{k-3} \right) \\
&=2^{n-1}+2^{n-2}+n+1 \geq n^2+1 = n(C_n)
\end{align*}
and 
\begin{align*}
n(UP_n) 
&=\frac{n^2+n+2}{2}+3(n-3)=\frac{n^2+7n-16}{2} \leq n^2 +1 =n(C_n).
\end{align*}
\end{itemize}
\end{proof}

\subsection{Unicyclic graphs with fixed girth}

Instead of proving Theorem~\ref{theo:unigir} we show a stronger statement below, it is easy to see that Theorem~\ref{theo:unigir} follows as an immediate consequence.

\begin{theo}
If $G$ is an $n$-vertex unicyclic graph with girth $l$, then 
$$
n_k(US_n^l)\geq n_k(G)\geq n_k(UP_n^l)
$$
for any integers $k\geq 0$ and $n\geq l\geq 3.$
\end{theo}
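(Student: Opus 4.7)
The plan is to induct on $n$, with the base case $n=l$ (where $G = US_n^l = UP_n^l = C_l$ makes everything an equality by vertex-transitivity of $C_l$) and the trivial cases $k \in \{0,1\}$ disposed of by noting $n_0 = 1$ and $n_1 = n$ for every $n$-vertex graph. For $n > l$ and $k \geq 2$, the graph $G$ must have a pendent $v$ with neighbor $v'$ (else $G$ is 2-regular, hence a cycle, forcing $n = l$), and every $k$-subtree of $G$ either avoids $v$ or contains the pendent edge $vv'$, giving the decomposition
$$
n_k(G) = n_k(G-v) + n_{k-1}(v',G-v).
$$
The identical recursion applied to the obvious pendent of $US_n^l$ at $c$, and to the pendent $\nu_n$ of $UP_n^l$ (with neighbor $\nu_{n-1}$, the pendent of $UP_{n-1}^l$), yields
$$
n_k(US_n^l) = n_k(US_{n-1}^l) + n_{k-1}(c,US_{n-1}^l), \quad n_k(UP_n^l) = n_k(UP_{n-1}^l) + n_{k-1}(\nu_{n-1},UP_{n-1}^l).
$$
The first summand on each side is controlled by the inductive hypothesis, so what remains is a vertex-version of the theorem (an analogue of Lemma~\ref{lem:nk}): for every vertex $v$ in every $n$-vertex unicyclic graph $H$ of girth $l$,
$$
n_k(c,US_n^l) \geq n_k(v,H) \geq n_k(\nu_n,UP_n^l).
$$

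I plan to prove this auxiliary statement by a parallel induction on $n$. For $n > l$, pick any pendent $u$ of $H$ with neighbor $u'$. If $u = v$, then $n_k(v,H) = n_{k-1}(v',H-v)$ and the inductive hypothesis on $H-v$ sandwiches $n_k(v,H)$ between $n_{k-1}(c,US_{n-1}^l)$ and $n_{k-1}(\nu_{n-1},UP_{n-1}^l)$; the upper bound is then upgraded via $n_k(c,US_n^l) \geq n_{k-1}(c,US_{n-1}^l)$ (the summand in the $US$-recursion accounting for subtrees that include the extra pendent), and the lower bound is matched exactly since $n_k(\nu_n,UP_n^l) = n_{k-1}(\nu_{n-1},UP_{n-1}^l)$. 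If instead $u \neq v$, I use the decomposition
$$
n_k(v,H) = n_k(v,H-u) + n_{k-1}(\{v,u'\},H-u)
$$
together with the trivial bound $n_{k-1}(\{v,u'\},H-u) \leq n_{k-1}(v,H-u)$ and the inductive hypothesis applied to both surviving terms to obtain $n_k(v,H) \leq n_k(c,US_{n-1}^l) + n_{k-1}(c,US_{n-1}^l) = n_k(c,US_n^l)$. For the lower bound I drop the nonnegative cross-term: $n_k(v,H) \geq n_k(v,H-u) \geq n_k(\nu_{n-1},UP_{n-1}^l) \geq n_{k-1}(\nu_{n-1},UP_{n-1}^l) = n_k(\nu_n,UP_n^l)$.

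The main obstacle is the case in the auxiliary lemma where $v$ is not itself a pendent, because the natural decomposition at another pendent $u$ produces the two-vertex term $n_{k-1}(\{v,u'\},H-u)$; I sidestep it with the crude-but-sufficient bound $n_{k-1}(\{v,u'\},H-u) \leq n_{k-1}(v,H-u)$. The only other nontrivial ingredient is the monotonicity of $f_m(k) := n_k(\nu_m, UP_m^l)$ in $k$, used to pass from $n_k(\nu_{n-1},UP_{n-1}^l)$ up to the matching value $n_{k-1}(\nu_{n-1},UP_{n-1}^l)$. A direct enumeration of the subtrees of $UP_m^l$ containing the pendent, classified first by how far along the pendent path they reach and then, upon reaching the cycle at $v_0$, by how many cycle vertices they cover (noting the $l$ spanning subtrees that arise when all cycle vertices are covered), yields $f_m(k)=1$ for $1\leq k \leq m-l+1$ and $f_m(k) = k-m+l$ for $m-l+1\leq k\leq m$, which is visibly non-decreasing. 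With the auxiliary lemma in hand, the main induction closes: combining the two displayed recursions with the inductive hypothesis (on $n_k(G-v)$) and the auxiliary lemma (on $n_{k-1}(v',G-v)$) delivers $n_k(G) \leq n_k(US_n^l)$ and $n_k(G) \geq n_k(UP_n^l)$.
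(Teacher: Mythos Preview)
Your overall strategy---induct on $n$, peel off a pendent vertex, and control the resulting rooted term via an auxiliary inequality $n_k(c,US_m^l)\ge n_k(v,H)\ge n_k(\nu_m,UP_m^l)$---is exactly the paper's. The organization differs only in how the auxiliary inequality is obtained: the paper derives the upper bound in one stroke from the main induction hypothesis via $n_k(c,US_t^l)=n_k(US_t^l)-n_k(P_{l-1})\ge n_k(G)-n_k(G-v)=n_k(v,G)$ (since $G-v$ always contains a copy of $P_{l-1}$), and for the lower bound it simply writes down an explicit lower bound for $n_k(v,G)$ as a piecewise function of $k$ and the distance from $v$ to the cycle, then compares it with the exact formula for $n_k(\nu,UP_t^l)$. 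Your self-contained induction on the auxiliary statement is a perfectly legitimate alternative and is arguably cleaner.

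There is, however, one genuine hole. In your lower-bound argument for the auxiliary lemma in the case $u\neq v$, the chain
\[
n_k(v,H)\ \ge\ n_k(v,H-u)\ \ge\ n_k(\nu_{n-1},UP_{n-1}^l)\ \ge\ n_{k-1}(\nu_{n-1},UP_{n-1}^l)\ =\ n_k(\nu_n,UP_n^l)
\]
breaks at the single value $k=n$: there $n_k(v,H-u)=n_k(\nu_{n-1},UP_{n-1}^l)=0$ while $n_{k-1}(\nu_{n-1},UP_{n-1}^l)=l>0$, so the third inequality is false. Your monotonicity $f_{n-1}(k)\ge f_{n-1}(k-1)$ is valid only for $k\le n-1$; you stated it on $1\le k\le m$ but then invoked it at $k=n$ with $m=n-1$. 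The repair is a one-liner: for $k=n$, every $n$-vertex unicyclic graph of girth $l$ has exactly $l$ spanning trees, so $n_n(v,H)=l=n_n(\nu_n,UP_n^l)$ directly. With this boundary case added your proof goes through; note that the paper's explicit-formula route for the lower bound sidesteps this issue precisely because it never drops the cross term.
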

\begin{proof}
We proceed by induction on $n$. When $n=l$ or $l+1$ there is only one $n$-vertex unicyclic graph with 
girth $l$. 

Assume that the inequalities hold whenever $l\leq n\leq t$ for some $t\geq l+1$. Now, we consider 
the case where $n=t+1\geq l+2$. As part of the induction hypothesis, we know that 
$$
n_k(US_t^l)\geq n_k(G)\geq n_k(UP_t^l)
$$
for any integers $k\geq 0$ and $3\leq l\leq t$, and any unicyclic graph $G$ on $t$ vertices with girth $l$. 

For any $v\in V(G)$, $c$ being the branching vertex of 
$US_t^l$ and $\nu$ being the leaf in $UP_t^l$, we have, for $k\geq 2$,
$$
n_k(c,US_t^l) = n_k(US_t^l)- n_k(P_{l-1}) \geq n_k(v,G),
$$
$$
n_k(v,G)\geq n_k(\nu,UP_n^l)=
\begin{cases}
1 \hspace{1.5cm}\text{ if }k\leq n-l+1,\\
k-(n-l) \text{ if }n-l+1\leq k\leq n,\\
0 \hspace{1.6cm}\text{otherwise}
\end{cases}
$$
and 
$$
n_k(v,G)\geq 
\begin{cases}
1 \hspace{0.9cm}\text{ if }k\leq d+1,\\
k-d \hspace{0.3cm}\text{ if }d+1\leq k\leq d+l,\\
l \hspace{1cm}\text{ if }d+l\leq k\leq n,\\
0 \hspace{1.1cm}\text{otherwise}
\end{cases}
$$
where $d \leq n-l$ is the distance between $v$ and the cycle in $G$. Consequently, for any $k\geq 2$, we have
$$
n_k(c,US_t^l)\geq n_k(v,G)\geq n_k(\nu,UP_t^l) . 
$$

With $n\geq l+2$, $G$ must have a leaf $v$ with a neighbor $v'$. Let 
$\nu$ be the leaf in $UP_n^l$ with neighbor $\nu'$, and $\mu$ be a leaf in $US_n^l$.
Then, for any $k\geq 2$ we have
\begin{align*}
n_k(US_n^l)
&=n_k(US_n^l-\mu)+n_k(\mu,US_n^l)
=n_k(US_{n-1}^l)+n_{k-1}(c,US_{n-1}^l)\\
&\geq n_k(G-v)+n_{k-1}(v',G-v)=n_k(G)\\
&\geq n_k(UP_{n-1}^l)+n_{k-1}(\nu',UP_{n-1}^l) \\
&=n_k(UP_n^l-\nu)+n_k(\nu,UP_n^l)=n_k(UP_n^l).
\end{align*}
\end{proof}

\section{Subtrees of unicyclic graphs with given segment sequences}
\label{Sec:MSubT}

In this section we consider the extremal unicyclic graphs, with a given segment sequence or number of segments, that maximize the number of subtrees.
We start with a technical Lemma.

\begin{lemma}
\label{Lem:FG_StarLike}
Let $C=C_n(l_1,l_2,\dots,l_m)$ be a unicyclic graph obtained by attaching $m$ 
pendent paths of lengths $l_1,l_2,\dots,l_m$ to vertices of $C_n$. Define 
$SC=SC_n(l_1,l_2,\dots,l_m)$ to be the special case where all $m$ paths are attached to a single vertex of $C_n$ (Figure~\ref{Fig:TransC_SC}). Then for any non-negative integer $k$ we have 
\begin{equation}\label{ineq:1}
n_k(C)\leq n_k(SC).
\end{equation}
Furthermore, if $c$ is the branching vertex of $SC$, then for any 
vertex $v$ on the cycle of $C$ we have 
\begin{equation}\label{ineq:2}
n_k(v,C)\leq n_k(c,SC).
\end{equation}
\end{lemma}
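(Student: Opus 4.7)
The plan is to establish both \eqref{ineq:1} and \eqref{ineq:2} simultaneously by induction on $m$, the number of pendent paths attached to the cycle. The base case $m=0$ gives $C=SC=C_n$, so both inequalities are trivially equalities, with both $n_k(\cdot,\cdot)$'s agreeing.

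For the inductive step, I would peel off the last attached path $Q$, of length $l_m$, from both $C$ and $SC$, obtaining $C_0$ and $SC_0$ with $m-1$ paths each; say $Q$ is attached at the cycle vertex $v_0$ in $C$ and at $c$ in $SC$. Every subtree $T$ of $C$ then falls into exactly one of three types: (a) $T$ avoids $Q$ entirely, so $T$ is a subtree of $C_0$; (b) $T$ lies wholly inside the $l_m$ added vertices of $Q$, so $T$ is a subpath of a path on $l_m$ vertices and its count depends only on $l_m$ and $k$; or (c) $T$ contains $v_0$ together with a prefix of $Q$ on $j\geq 1$ vertices, in which case removing this prefix leaves a subtree of $C_0$ containing $v_0$ on $k-j$ vertices. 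The identical trichotomy applies to $SC$ with $(C_0,v_0)$ replaced by $(SC_0,c)$, and the type (b) counts cancel, giving
\[
n_k(SC)-n_k(C)=\bigl(n_k(SC_0)-n_k(C_0)\bigr)+\sum_{j=1}^{l_m}\bigl(n_{k-j}(c,SC_0)-n_{k-j}(v_0,C_0)\bigr).
\]
The inductive form of \eqref{ineq:1} makes the first bracket nonnegative, and the inductive form of \eqref{ineq:2} (applied at the cycle vertex $v_0$ of $C_0$) makes each summand nonnegative, proving \eqref{ineq:1} for $C$.

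For \eqref{ineq:2}, fix a cycle vertex $v$ of $C$. If $v=v_0$, restricting the trichotomy to subtrees containing $v_0$ gives $n_k(v_0,C)=n_k(v_0,C_0)+\sum_{j=1}^{l_m}n_{k-j}(v_0,C_0)$, matched against the analogous expression for $n_k(c,SC)$, and \eqref{ineq:2} on $C_0$ finishes the case. If $v\neq v_0$, any subtree of $C$ containing both $v$ and some vertex of $Q$ is forced to pass through $v_0$ (since $Q$ is a pendent path at $v_0$), so
\[
n_k(v,C)=n_k(v,C_0)+\sum_{j=1}^{l_m}n_{k-j}(\{v,v_0\},C_0).
\]
The key reduction is the trivial monotonicity $n_{k-j}(\{v,v_0\},C_0)\leq n_{k-j}(v,C_0)$: imposing an extra containment only shrinks the family counted. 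Combined with two applications of the inductive form of \eqref{ineq:2}, this yields $n_k(v,C)\leq n_k(c,SC)$.

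The step requiring the most care is the case $v\neq v_0$ of \eqref{ineq:2}: the appearance of the multi-vertex quantity $n_{k-j}(\{v,v_0\},C_0)$ forces us to invoke the elementary ``funneling'' observation that reduces it to the single-vertex count $n_{k-j}(v,C_0)$ controlled by the induction. All other pieces are immediate from the two inductive hypotheses combined with the three-way decomposition of subtrees according to their interaction with the newly peeled pendent path.
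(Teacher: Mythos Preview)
Your proof is correct and follows essentially the same strategy as the paper: induction on $m$, peeling off one pendent path, decomposing subtrees according to their intersection with that path, and using the monotonicity $n_{k-j}(\{v,v_0\},C_0)\le n_{k-j}(v,C_0)$ (the paper funnels to $v_0$ rather than $v$, but either works). The only cosmetic differences are that you start the induction at $m=0$ rather than $m=1$ and index the decomposition by the number $j$ of vertices used in $Q$ rather than by a convolution over $k_1+k_2=k+1$.
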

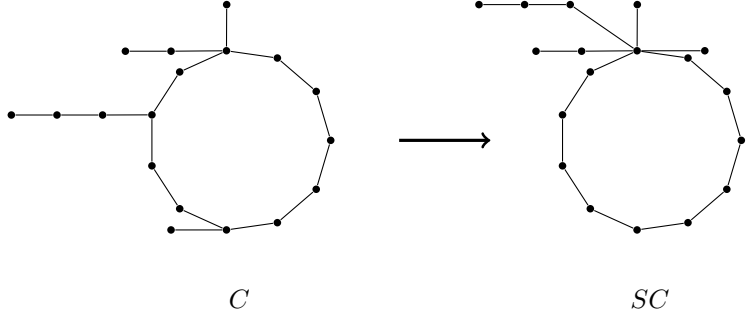
\begin{figure}[!ht]
\centering
    \begin{tikzpicture}[scale=0.6]
    \node[fill=black,circle,inner sep=1pt] (t1) at (1*2,0) {};
    \node[fill=black,circle,inner sep=1pt] (t2) at (0.841253532831*2, 0.540640817456*2) {};
    \node[fill=black,circle,inner sep=1pt] (t3) at (0.415415013002*2, 0.909631995355*2) {};
    \node[fill=black,circle,inner sep=1pt] (t4) at (-0.142314838273*2, 0.989821441881*2) {};
    \node[fill=black,circle,inner sep=1pt] (t5) at (-0.654860733945*2, 0.755749574354*2) {};
    \node[fill=black,circle,inner sep=1pt] (t6) at (-0.959492973614*2, 0.281732556841*2) {};
    \node[fill=black,circle,inner sep=1pt] (t7) at (-0.959492973614*2, -0.281732556841*2) {};
    \node[fill=black,circle,inner sep=1pt] (t8) at (-0.654860733945*2, -0.755749574354*2) {};
    \node[fill=black,circle,inner sep=1pt] (t9) at (-0.142314838273*2, -0.989821441881*2) {};
    \node[fill=black,circle,inner sep=1pt] (t10) at (0.415415013002*2, -0.909631995355*2) {};
    \node[fill=black,circle,inner sep=1pt] (t11) at (0.841253532831*2, -0.540640817456*2) {};
    \node[fill=black,circle,inner sep=1pt] (t12) at (-1.5, 1.97) {};
    \node[fill=black,circle,inner sep=1pt] (t13) at (-2.5, 1.97) {};
    \node[fill=black,circle,inner sep=1pt] (t14) at (-0.28, 3) {};
    \node[fill=black,circle,inner sep=1pt] (t15) at (-3, 0.56) {};
    \node[fill=black,circle,inner sep=1pt] (t16) at (-4, 0.56) {};
    \node[fill=black,circle,inner sep=1pt] (t17) at (-5, 0.56) {};
    \node[fill=black,circle,inner sep=1pt] (t18) at (-1.5, -1.98) {};
    \draw (t4)--(t12)--(t13);
    \draw (t4)--(t14);
    \draw (t6)--(t15)--(t16)--(t17);
    \draw (t9)--(t18);
    \draw(t1)--(t2)--(t3)--(t4)--(t5)--(t6)--(t7)--(t8)--(t9)--(t10)--(t11)--(t1);
    \draw[very thick,->] (3.5,0)--(5.5,0);
    \node[fill=black,circle,inner sep=1pt] (t11) at (9+1*2,0) {};
    \node[fill=black,circle,inner sep=1pt] (t12) at (9+0.841253532831*2, 0.540640817456*2) {};
    \node[fill=black,circle,inner sep=1pt] (t13) at (9+0.415415013002*2, 0.909631995355*2) {};
    \node[fill=black,circle,inner sep=1pt] (t14) at (9+-0.142314838273*2, 0.989821441881*2) {};
    \node[fill=black,circle,inner sep=1pt] (t15) at (9+-0.654860733945*2, 0.755749574354*2) {};
    \node[fill=black,circle,inner sep=1pt] (t16) at (9+-0.959492973614*2, 0.281732556841*2) {};
    \node[fill=black,circle,inner sep=1pt] (t17) at (9+-0.959492973614*2, -0.281732556841*2) {};
    \node[fill=black,circle,inner sep=1pt] (t18) at (9+-0.654860733945*2, -0.755749574354*2) {};
    \node[fill=black,circle,inner sep=1pt] (t19) at (9+-0.142314838273*2, -0.989821441881*2) {};
    \node[fill=black,circle,inner sep=1pt] (t110) at (9+0.415415013002*2, -0.909631995355*2) {};
    \node[fill=black,circle,inner sep=1pt] (t111) at (9+0.841253532831*2, -0.540640817456*2) {};
    \node[fill=black,circle,inner sep=1pt] (t112) at (9+-1.5, 1.97) {};
    \node[fill=black,circle,inner sep=1pt] (t113) at (9+-2.5, 1.97) {};
    \node[fill=black,circle,inner sep=1pt] (t114) at (9+-0.28, 3) {};
    \node[fill=black,circle,inner sep=1pt] (t115) at (9+1.25-3, 3) {};
    \node[fill=black,circle,inner sep=1pt] (t116) at (9+1.25-4, 3) {};
    \node[fill=black,circle,inner sep=1pt] (t117) at (9+1.25-5, 3) {};
    \node[fill=black,circle,inner sep=1pt] (t118) at (9+1.2, 1.98) {};
    \draw (t14)--(t112)--(t113);
    \draw (t14)--(t114);
    \draw (t14)--(t115)--(t116)--(t117);
    \draw (t14)--(t118);
    \draw(t11)--(t12)--(t13)--(t14)--(t15)--(t16)--(t17)--(t18)--(t19)--(t110)--(t111)--(t11);
    \node at (0, -3.5) {$C$};
    \node at (9, -3.5) {$SC$};
     \end{tikzpicture}
\caption{Examples of the graphs $C$ and $SC$ in Lemma \ref{Lem:FG_StarLike}.}
\label{Fig:TransC_SC}
\end{figure}
\begin{proof}
The case of $k=0$ or $1$ is obvious. For $k\geq 2$, we proceed by induction on $m$. If $m=1$, the inequality \eqref{ineq:1} is obvious as we have $C=SC$. Inequality \eqref{ineq:2} follows from the fact that there are more subtrees containing $c$ than those containing any other vertex $v$ on the cycle of $C=SC$.

Suppose that \eqref{ineq:1} and \eqref{ineq:2} hold for $m\leq s$ for some $s\geq 1$, and consider now 
the case $m=s+1$. Let $P$ be a pendent path of length $l_m$ with vertices $u$ on the cycle of $C$ and $v_1,v_2,\dots,v_{l_m}$ not on the cycle. Note that 
by the induction hypothesis 
$$
n_k(u,C-v_1-v_2-\dots-v_{l_m})\leq n_k(c,SC-v_1-v_2-\dots-v_{l_m})
$$
for any $k\geq 0.$ For any integer $k\geq 2$, we have 
\begin{align*}
n_k(C)&=n_k(C-v_1-v_2-\dots-v_{l_m})+n_k(P)\\
& \quad\quad\quad +\sum_{\substack{k_1+k_2=k+1\\k_1,k_2\geq 2}}n_{k_1}(u,C-v_1-v_2-\dots-v_{l_m})\cdot n_{k_2}(u,P)\\
&\leq n_k(SC-v_1-v_2-\dots-v_{l_m})+n_k(P)\\
& \quad\quad\quad +\sum_{\substack{k_1+k_2=k+1\\k_1,k_2\geq 2}}n_{k_1}(c,SC-v_1-v_2-\dots-v_{l_m})\cdot n_{k_2}(c,P) \\
& =n_k(SC)
\end{align*}
and 
\begin{align*}
n_k(v,C)&=n_k(v,C-v_1-v_2-\dots-v_{l_m})\\
& \quad\quad\quad +\sum_{\substack{k_1+k_2=k+1\\k_2\geq 2}}n_{k_1}(\{v,u\},C-v_1-v_2-\dots-v_{l_m})\cdot n_{k_2}(u,P)\\
&\leq n_k(v,C-v_1-v_2-\dots-v_{l_m})\\
& \quad\quad\quad +\sum_{\substack{k_1+k_2=k+1\\k_2\geq 2}}n_{k_1}(u,C-v_1-v_2-\dots-v_{l_m})\cdot n_{k_2}(u,P)\\
&\leq n_k(c,SC-v_1-v_2-\dots-v_{l_m})\\
& \quad\quad\quad +\sum_{\substack{k_1+k_2=k+1\\k_2\geq 2}}n_{k_1}(c,SC-v_1-v_2-\dots-v_{l_m})\cdot n_{k_2}(c,P) \\
& =n_k(c,SC)
\end{align*}
for any vertex $v$ on the cycle of $C$.
\end{proof}

Now we establish, through a series of observations, the characteristics of the extremal structure among unicyclic graphs with a given segment sequence. We first introduce an operation that, while preserving the segment sequence of a graph, generate more subtrees of any size. This is 
a slightly stronger version of Lemma 1 in \cite{PrevPaper}, the 
proof is similar.

\begin{lemma}
\label{Lem:TreeTrans}
Let $P$ be a path with end vertices $u$ and $v$. Let $G$ and $K$ be connected graphs (not necessarily trees), 
which contain vertices $u'$ and $v'$, respectively. 
Let $H$ be the graph obtained by merging $u$ with $u'$, and $v$ with $v'$.
Let $v_1,v_2,\dots,v_l$ be the neighbors of $v$ in $K$. Define 
$$H'=H-vv_1-vv_2-\dots-vv_l+uv_1+uv_2+\dots+uv_l$$
as in Figure \ref{Fig:Trans}. Then for any integer $k\geq 0$, we have
$$
n_k(H')\geq n_k(H)
$$
with equality if and only if $G$ or $K$ is a single vertex 
(in which case $H$ and $H'$ are isomorphic) or $k\in\{0,1,2\}$.
\end{lemma}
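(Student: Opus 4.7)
The key observation is that $V(H)=V(H')$; the only change from $H$ to $H'$ is that the edges $vv_i$ for $v_i\in N_K(v)$ are replaced by $uv_i$. My plan is to partition the $k$-vertex subtrees of each graph by whether they meet $V(G)\setminus\{u\}$ and/or $V(K)\setminus\{v\}$, yielding four classes: (i) those contained in $V(P)$, (ii) those meeting the $G$-side only, (iii) those meeting the $K$-side only, and (iv) the mixed subtrees meeting both.

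Classes (i)--(iii) will contribute equally to $n_k(H)$ and $n_k(H')$: for (i) the ambient path $P$ is unchanged; for (ii) the induced subgraphs on $V(G)\cup V(P)$ are identical in the two graphs; for (iii) the induced subgraphs on $V(K)\cup V(P)$ differ, but are isomorphic via the map that reverses $P$ (swapping the endpoints $u$ and $v$) and fixes $V(K)\setminus\{v\}$, which sends each $vv_i$-edge of $H$ to the corresponding $uv_i$-edge of $H'$. All of the discrepancy between $n_k(H)$ and $n_k(H')$ therefore lies in class (iv).

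For a mixed subtree $T$ I will establish a bijection with triples $(T_G,T_K,P')$, where $T_G$ is a subtree of $G$ containing $u$ with $a\geq 2$ vertices, $T_K$ is a subtree of $K$ containing $v$ with $b\geq 2$ vertices, and $P'\subseteq P$ is a sub-path starting at $u$ with $p'$ vertices ($1\leq p'\leq p$); a size count gives $|T|=a+b+p'-2$. Connectivity in $H$ forces the subtree to traverse all of $P$, so $p'=p$, whereas in $H'$ any $1\leq p'\leq p$ is permitted since $K$ is now rooted at $u$. This yields
\[
n_k(H')-n_k(H)=\sum_{\substack{a,b\geq 2,\ 1\leq p'<p\\ a+b+p'-2=k}} n_a(u,G)\,n_b(v,K)\geq 0,
\]
where $n_a(u,G)$ denotes the number of $a$-vertex subtrees of $G$ containing $u$. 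The equality analysis then follows: if $G$ or $K$ is a single vertex the sum is empty since $n_a(u,G)=0$ (resp.\ $n_b(v,K)=0$) for sizes $\geq 2$, and $H\cong H'$ by inspection; if $k\leq 2$ the index constraints have no solutions; otherwise a witness such as $a=b=2$, $p'=k-2$ (adjusted when $k-2\geq p$) produces a strictly positive term. The main obstacle is the bijection itself, which hinges on the observation that each of $V(G)\setminus\{u\}$ and $V(K)\setminus\{v\}$ attaches to the rest of the ambient graph through a single vertex, so by the no-revisit property of trees the restriction of $T$ to each region is automatically connected.
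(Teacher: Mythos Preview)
Your argument is correct and essentially the same as the paper's: both separate subtrees by whether they meet both the $G$-side and the $K$-side, note that the non-mixed classes are in bijection (the paper glosses over your class~(iii), which you handle cleanly via the reversal of $P$), and observe that mixed subtrees in $H$ must contain all of $P$ while in $H'$ they need not. You are more explicit---deriving an exact formula for $n_k(H')-n_k(H)$---whereas the paper simply describes an injection and exhibits the extra subtrees of $H'$ that use edges from both $G$ and $K$ but none from $P$.
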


\begin{figure}[!ht]
\centering
    \begin{tikzpicture}[scale=0.75]
    \draw[dashed] (1,-1.25)--(0,1)--(2,1)--(1,-1.25);
    \draw[dashed] (4,-1.25)--(3,1)--(5,1)--(4,-1.25);
    \node[fill=black,label=above:{$u$},circle,inner sep=1pt] () at (1,0-1) {};
    \node[fill=black,circle,inner sep=1pt] () at (2,0-1) {};
    \node[fill=black,label=above:{$v$},circle,inner sep=1pt] () at (4,0-1) {};   
    \node[fill=black,circle,inner sep=0pt] () at (2.4,0-1) {};
    \node[fill=black,circle,inner sep=0pt] () at (2.7,0-1) {};
    \node[fill=black,circle,inner sep=0pt] () at (3,0-1) {};
    \node[fill=white,label=left:{$P$},circle,inner sep=0pt] () at (3,0.3-1) {};
    \node[fill=white,label=below:{$G$},circle,inner sep=0pt] () at (1,1.75-1) {};
    \node[fill=white,label=below:{$K$},circle,inner sep=0pt] () at (4,1.75-1) {};
    \draw[very thick,->] (5.5,0)--(6.5,0);
    \draw (1,0-1)--(2,0-1)--(2.25,0-1);
    \draw (4,0-1)--(3.25,0-1);
    \draw[dashed] (1+7,-1.25+1)--(0+7,1+1)--(2+7,1+1)--(1+7,-1.25+1);
    \draw[dashed] (8,0.2)--(7,-2)--(9,-2)--(8,0.2);
    \node[fill=black,label=above:{$u$},circle,inner sep=1pt] () at (7+1,0) {};
    \node[fill=black,circle,inner sep=1pt] () at (7+2,0) {};
    \node[fill=black,label=above:{$v$},circle,inner sep=1pt] () at (7+4,0) {};
    \node[fill=black,circle,inner sep=0pt] () at (7+2.4,0) {};
    \node[fill=black,circle,inner sep=0pt] () at (7+2.7,0) {};
    \node[fill=black,circle,inner sep=0pt] () at (7+3,0) {};
    \node[fill=white,label=left:{$P$},circle,inner sep=0pt] () at (7+3,0.3) {};
    \node[fill=white,label=left:{$H'$},circle,inner sep=0pt] () at (7+3,-2.5) {};
    \node[fill=white,label=left:{$H$},circle,inner sep=0pt] () at (3,-2.5) {};
    \node[fill=white,label=below:{$G$},circle,inner sep=0pt] () at (7+1,1.75) {};
    \node[fill=white,label=below:{$K$},circle,inner sep=0pt] () at (7+1,-1) {};
    \draw (7+1,0)--(7+2,0)--(7+2.25,0);
    \draw (7+4,0)--(7+3.25,0);
     \end{tikzpicture}
\caption{The graphs $H$ and $H'$ in Lemma \ref{Lem:TreeTrans}.}
\label{Fig:Trans}
\end{figure}
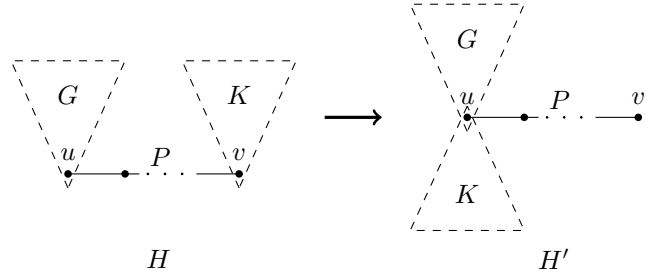
\begin{proof}
We construct a one-to-one map from the set of $k$-vertex subtrees of $H$ to that of $H'$:
\begin{itemize}
\item Any subtrees that contain no edge from $G$ or no edge from $K$ stay the same.
\item A subtree $S$ of $H$ that uses edges from both $G$ and $K$ is mapped to that of $H'$ spanned 
by the same set of vertices; such subtrees contain the entire $P$.
\end{itemize}

If $3\leq k\leq |H|$ and neither $G$ or $K$ is a single vertex, then $H'$ can have subtrees using edges from $G$ and $K$ but none 
from $P$. These subtrees do not have preimages in the above map.
\end{proof}

Lemma \ref{Lem:TreeTrans} can, of course, be stated for general $n(.)$ instead of $n_k(.)$. While doing so we also replace the path $P$ with a general graph $R$ in the statement.

\begin{lemma}
\label{Lem:GraphTrans}
Let $u$ and $v$ be vertices of a graph $R$, such that $n(u,R)\geq n(v,R)$. Let $G$ and $K$ be connected graphs, which contain vertices $u'$ and $v'$, respectively. 
Let $H$ be the graph obtained by merging $u$ with $u'$, and $v$ with $v'$, and $H'$ obtained by merging $u',v'$ and $u$. Then, we have
$$
n(H')\geq n(H)
$$
with equality if and only if $n(u,R)= n(v,R)$ and $K$ is a single vertex (in which case $H$ and $H'$ are isomorphic).
\end{lemma}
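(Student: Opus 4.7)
My plan is to derive explicit polynomial expressions for $n(H)$ and $n(H')$ by classifying each nonempty subtree according to which of the distinguished vertices $u$ and $v$ it contains, and then compare the two expressions directly.

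First I would set up shorthand. Let $a=n(u',G)$ and $b=n(v',K)$ count the subtrees of $G$ and $K$ through the designated attachment vertex, so that $a,b\geq 1$ with equality iff the corresponding graph is a single vertex. For the middle graph $R$, I would partition its nonempty subtrees into four classes according to which of $u,v$ they contain, writing $s_u,s_v,s_{uv},s_\emptyset$ for the counts of subtrees containing only $u$, only $v$, both, and neither, respectively. The hypothesis $n(u,R)\geq n(v,R)$ is then equivalent to $s_u\geq s_v$, since $n(u,R)=s_u+s_{uv}$ and $n(v,R)=s_v+s_{uv}$.

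Next, I would perform a case analysis, on each of $H$ and $H'$, based on whether $u,v\in V(T)$. The key structural distinction is that in $H$ the vertex $u$ is shared only between $G$ and $R$ while $v$ is shared only between $R$ and $K$, whereas in $H'$ the vertex $u$ is shared among all three of $G$, $R$, $K$ (while $v$ stays inside $R$). Consequently, a subtree of $H$ containing $u$ but not $v$ is counted $a\cdot s_u$ times, while the analogous subtrees of $H'$ pick up a factor from the $K$-piece and are counted $ab\cdot s_u$ times; similarly, subtrees of $H$ containing $v$ but not $u$ contribute $b\cdot s_v$, whereas in $H'$ such subtrees contribute only $s_v$ since $v\notin V(T)$ forces $u\notin V(T)$ and hence disconnects $T$ from $K$. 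Summing over the four $(u,v)$-containment cases, the contributions from subtrees missing both $u$ and $v$ cancel between $H$ and $H'$, and one obtains the clean identity
$$
n(H')-n(H)=(b-1)(a s_u-s_v).
$$

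The desired inequality then follows at once from $b\geq 1$, $a\geq 1$, and $s_u\geq s_v$, since $a s_u\geq s_u\geq s_v$. The equality analysis reduces to inspecting when each factor vanishes: $b=1$ corresponds to $K$ being a single vertex (making $H$ and $H'$ literally equal), while $as_u=s_v$ forces $a=1$ together with $s_u=s_v$, i.e., $G$ is a single vertex and $n(u,R)=n(v,R)$. The main technical hurdle will be the case analysis itself: singletons $\{u\}$ and $\{v\}$ lie at shared vertices and must be attributed to exactly one class to avoid double counting, and one has to verify that connectivity of $T$ forces the claimed constraints on which of $T_G,T_R,T_K$ may be simultaneously nonempty. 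Once this bookkeeping is settled, the proof reduces to the one-line identity displayed above.
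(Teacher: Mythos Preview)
Your approach --- partitioning subtrees by which of $u,v$ they contain and arriving at the identity $n(H')-n(H)=(b-1)(as_u-s_v)$ --- is correct and is the same strategy the paper employs, only carried out more carefully. In fact your identity is more accurate than the one printed in the paper: the displayed formula there,
\[
\big(n(u,R)-n(v,R)\big)\,n(v',K)+(n(u',G)-1)(n(v',K)-1),
\]
is not correct in general. For instance, take $R$ the star with centre $u$ and leaves $v,z$, $G=\{u'\}$, and $K$ a single edge $v'w$: then $H$ is the path $z\,u\,v\,w$ and $H'$ is the star $S_4$, so $n(H')-n(H)=12-11=1$, while the paper's expression gives $(4-3)\cdot 2+0=2$. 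Your factored form $(b-1)(as_u-s_v)$ gives the right value and the nonnegativity follows exactly as you argue, since $b\ge 1$, $a\ge 1$, and $s_u\ge s_v$. The bookkeeping you flag (singletons at shared vertices, which pieces of $T$ may be nonempty) is routine once one observes that $u,v$ are cut vertices of $H$ and $u$ is a cut vertex of $H'$.

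One point you should make explicit. The equality characterization your formula yields is a \emph{disjunction}: $n(H')=n(H)$ iff $b=1$ (i.e.\ $K$ is a single vertex, in which case $H=H'$) \emph{or} $a=1$ together with $s_u=s_v$ (i.e.\ $G$ is a single vertex and $n(u,R)=n(v,R)$). This does not match the lemma's stated condition, which is the conjunction ``$n(u,R)=n(v,R)$ and $K$ is a single vertex''. Your version is the correct one: with $R=P_3$ having ends $u,v$, $G$ a single vertex, and $K$ any nontrivial path, $H$ and $H'$ are isomorphic paths and $n(H')=n(H)$, yet $K$ is not a single vertex. So the equality clause as stated is too restrictive, and your analysis both exposes and corrects this.
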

\begin{proof}
\begin{align*}
n(H')-n(H)=\big(n(u,R)-n(v,R)\big)n(v',K)+(n(u',G)-1)(n(v',K)-1).
\end{align*}
\end{proof}

The next observation introduces some simple conditions that can be used to compare the number of subtrees of different graphs.

\begin{lemma}\label{Lem:Cond}
Let $K$ be a graph with at least two vertices, one of which is $u$. Let $H$ and $H'$ be two other graphs with vertices $w$ and $w'$ respectively. Let $G$ be the graph obtained from $K$ and $H$ by identifying $u$ and $w$, and $G'$ the graph obtained from identifying $u$ and $w'$. Then
$$ n(G') \geq n(G) $$
if 
\begin{equation}\label{eq:cond}
n(H') \geq n(H) 
\hbox{ and }
n(w', H') \geq n(w, H).
\end{equation}
Furthermore, we have $n(G') > n(G)$ if strict inequality holds in at least one of the conditions in \eqref{eq:cond}.
\end{lemma}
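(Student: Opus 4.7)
The plan is to split the subtrees of $G$ according to whether they contain the shared vertex (call it $v$, the identification of $u$ and $w$), and then do the same for $G'$, expressing each count in closed form so that the difference $n(G') - n(G)$ becomes transparent.

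First I would observe that a subtree of $G$ either avoids $v$, in which case it is a subtree of the disjoint union $(H - w) \sqcup (K - u)$, or it contains $v$, in which case it decomposes uniquely as the union of a subtree of $H$ containing $w$ with a subtree of $K$ containing $u$ sharing only the vertex $v$. Being careful with the empty subtree (counted once in $n(H-w)$ and once in $n(K-u)$, which overcounts by one), this yields
\begin{equation*}
n(G) = n(H-w) + n(K-u) - 1 + n(w,H)\cdot n(u,K).
\end{equation*}
Using the identity $n(H) = n(H-w) + n(w,H)$ to eliminate $n(H-w)$, this simplifies to
\begin{equation*}
n(G) = n(H) + n(K-u) - 1 + n(w,H)\bigl(n(u,K) - 1\bigr),
\end{equation*}
and an identical formula holds for $n(G')$ with $H', w'$ in place of $H, w$.

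Subtracting gives
\begin{equation*}
n(G') - n(G) = \bigl(n(H') - n(H)\bigr) + \bigl(n(w',H') - n(w,H)\bigr)\bigl(n(u,K) - 1\bigr).
\end{equation*}
Since $K$ has at least two vertices and is connected, $u$ has at least one neighbor in $K$, so $n(u,K) \geq 2$ and the factor $n(u,K) - 1$ is strictly positive. Both terms on the right are then non-negative by the hypotheses in \eqref{eq:cond}, establishing $n(G') \geq n(G)$; furthermore strict inequality in either hypothesis immediately forces strict inequality in the sum.

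I do not anticipate a serious obstacle here: the only delicate point is the bookkeeping of the empty subtree in the decomposition $n(G) = n(H-w) + n(K-u) - 1 + n(w,H)n(u,K)$, and the only place where the hypothesis that $K$ has at least two vertices is used is to guarantee $n(u,K) - 1 \geq 1$ so that the second summand is controlled by $n(w',H') - n(w,H)$.
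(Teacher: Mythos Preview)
Your proof is correct and follows essentially the same approach as the paper: both express $n(G)$ via the decomposition into subtrees containing or avoiding the merged vertex, arriving at formulas that are algebraically equivalent (the paper writes $n(G') = n(H') + n(K) - 2 + (n(w',H')-1)(n(u,K)-1)$, which simplifies to your version using $n(K) = n(K-u) + n(u,K)$), and then subtracts to obtain the same difference $n(G') - n(G) = (n(H') - n(H)) + (n(w',H') - n(w,H))(n(u,K)-1)$. Your write-up is in fact more careful than the paper's in justifying the decomposition and in making explicit where the hypothesis on $K$ is used to ensure $n(u,K)\geq 2$.
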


\begin{proof}
Assuming \eqref{eq:cond}, we have
\begin{align*}
n(G') & = n(H') + n(K) - 2 + (n(w', H') -1)(n(u,K)-1) \\
& \geq n(H) + n(K) - 2 + (n(w, H) -1)(n(u,K)-1) \\
& = n(G)
\end{align*}
with equality if and only if both equalities hold in \eqref{eq:cond}.
\end{proof}

In the next few lemmas we compare the total number of subtrees by establishing the conditions \eqref{eq:cond} between certain pairs of subgraphs.

\begin{lemma}
\label{Lem:TwoSeg}
For any integers $l_1\geq l_2\geq 3$, we have
$$
n(U_1(l_1,l_2))\geq n(U_2(l_1,l_2))\qquad \text{and}
\qquad n(u,U_1(l_1,l_2))\geq n(v,U_2(l_1,l_2)),
$$
where $u$ and $v$ are the branching vertices of $U_1(l_1,l_2)$ and $U_2(l_1,l_2)$, respectively. The equality holds if and only if $l_1 = l_2$.
\end{lemma}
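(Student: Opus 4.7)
The plan is to prove both inequalities by direct calculation, using that each of $U_1(l_1,l_2)$ and $U_2(l_1,l_2)$ has a unique branching vertex $w_i$ where a cycle $C_{l_i}$ meets a pendant path of length $l_j$ (with $\{i,j\}=\{1,2\}$). First I would record two basic counts derived from Lemma~\ref{Lem:Formulas}: for any vertex $x$ on $C_n$, subtracting from $n(C_n)$ the number of subtrees of $C_n-x\cong P_{n-1}$ gives
$$
n(x,C_n)=(n^2+1)-\tfrac{n^2-n+2}{2}=\tfrac{n(n+1)}{2},
$$
and for $x$ an endpoint of $P_m$, every subtree containing $x$ is a subpath starting at $x$, so $n(x,P_m)=m$.

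Next, I would observe that any subtree of $U_i(l_1,l_2)$ containing $w_i$ decomposes uniquely as the union of its restriction to $C_{l_i}$ (a subtree of the cycle through $w_i$) and its restriction to the pendant path $P_{l_j+1}$ (a subtree of the path through $w_i$); conversely, any such pair can be glued at $w_i$. The glued object is always a tree, because the cycle-restriction is necessarily a proper subpath (the full cycle itself is not a tree), so no new cycle is created. This gives the product formula
$$
n(w_i,U_i(l_1,l_2))=\tfrac{l_i(l_i+1)}{2}\cdot(l_j+1),
$$
from which a short computation yields
$$
n(u,U_1(l_1,l_2))-n(v,U_2(l_1,l_2))=\tfrac12(l_1-l_2)(l_1+1)(l_2+1)\geq 0,
$$
with equality iff $l_1=l_2$. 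This settles the second claimed inequality.

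Finally, subtrees of $U_i(l_1,l_2)$ avoiding $w_i$ are exactly the subtrees of the disjoint union $P_{l_i-1}\sqcup P_{l_j}$, contributing $n(P_{l_i-1})+n(P_{l_j})-1$ (correcting for the empty subtree being counted twice). Adding this to $n(w_i,U_i(l_1,l_2))$, expanding via Lemma~\ref{Lem:Formulas}, and simplifying gives
$$
n(U_1(l_1,l_2))-n(U_2(l_1,l_2))=(l_1-l_2)\cdot\tfrac{(l_1+1)(l_2+1)-2}{2},
$$
which is non-negative since $l_1,l_2\geq 3$ forces $(l_1+1)(l_2+1)\geq 16>2$, with equality iff $l_1=l_2$. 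The only real subtlety is verifying the bijective factorization in the second step—specifically that spanning subpaths of $C_{l_i}$ (which use all $l_i$ vertices but only $l_i-1$ edges) glue correctly with any subpath of the pendant path; beyond this, the argument is pure algebra.
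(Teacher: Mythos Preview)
Your proof is correct and arrives at exactly the same closed forms as the paper: $n(u,U_1(l_1,l_2))=(l_2+1)\tfrac{l_1(l_1+1)}{2}$ and difference $\tfrac12(l_1-l_2)(l_1+1)(l_2+1)$ for the rooted count, and difference $\tfrac12(l_1-l_2)(l_1l_2+l_1+l_2-1)$ for the unrooted count (your $(l_1+1)(l_2+1)-2$ expands to the paper's $l_1l_2+l_1+l_2-1$). The only methodological difference is the pivot of the decomposition: the paper fixes an \emph{edge} on the cycle incident to the branching vertex and splits subtrees according to whether they contain that edge (so the ``no'' case contributes $n(P_{l_1+l_2})$, which then cancels in the difference), whereas you fix the branching \emph{vertex} and split accordingly (so the ``no'' case contributes $n(P_{l_i-1})+n(P_{l_j})-1$). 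Your choice has the mild advantage that the product formula for $n(w_i,U_i)$ drops out immediately and is then reused for the total count; the paper's choice makes the symmetric term $n(P_n)$ visibly cancel. Either way it is the same direct enumeration, and your handling of the one subtle point---that the cycle-restriction of any subtree through $w_i$ is a genuine subpath, so the glued object is still a tree---is sound.
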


\begin{proof}
First, we choose an edge (on the cycle) incident to the branching vertex of 
$U_1(l_1,l_2)$. We count the number of subtrees by considering the ones that contain this edge and the others, yielding
\begin{align}
n(U_1(l_1,l_2))
&=n(P_n)+|\{(x,y,z): x,y,z\geq 0,  x+y\leq l_1-2\text{ and } z\leq l_2\}|\nonumber\\
&=n(P_n)+(l_2+1)\frac{l_1(l_1-1)}{2}\nonumber\\
\label{Eq:l1l2}
&=n(P_n)+\frac{l_1^2l_2-l_1l_2+l_1^2-l_1}{2}.
\end{align}
Similarly
\begin{align*}
n(U_2(l_1,l_2))
&=n(P_n)+\frac{l_1l_2^2-l_1l_2+l_2^2-l_2}{2}.
\end{align*}
Thus 
$$
n(U_1(l_1,l_2))-n(U_2(l_1,l_2))=(l_1-l_2)(l_1l_2+l_2+l_1-1)/2 \geq 0
$$
with equality if and only if $l_1 = l_2$. Through the same enumeration process, we also have
\begin{equation}\label{Eq:ul1l2}
n(u,U_1(l_1,l_2))
=(l_2+1)\frac{(l_1+1)l_1}{2},
\end{equation}
\begin{align*}
n(v,U_2(l_1,l_2))
&=(l_1+1)\frac{(l_2+1)l_2}{2}
\end{align*}
and thus 
$$
n(u,U_1(l_1,l_2)) - n(v,U_2(l_1,l_2))=(l_1+1)(l_2+1)(l_1-l_2))/2 \geq 0
$$
with equality if and only if $l_1=l_2$.
\end{proof}
The following lemma considers a specific case. We skip the proof that follows from direct calculations.
\begin{lemma}
\label{Lem:SpecialCase}
$$
n(U_1(4,3))< n(U_1(3,2,2))
\hbox{ and }
n(c,U_1(4,3))< n(b,U_1(3,2,2))
$$
where $c$ and $b$ are the branching vertices of $U_1(4,3)$ and $U_1(3,2,2)$ respectively.
\end{lemma}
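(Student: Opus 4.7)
The proof is a direct numerical verification, so the plan is essentially to compute all four quantities and compare. Both graphs have $n = 7$ vertices. For the simpler graph $U_1(4,3)$ one can apply Lemma~\ref{Lem:TwoSeg} directly: the formula \eqref{Eq:l1l2} with $(l_1,l_2)=(4,3)$ gives
$$
n(U_1(4,3)) = n(P_7) + \frac{4^2\cdot 3 - 4\cdot 3 + 4^2 - 4}{2} = 29 + 24 = 53,
$$
and \eqref{Eq:ul1l2} yields
$$
n(c, U_1(4,3)) = (3+1)\cdot\frac{5\cdot 4}{2} = 40.
$$

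For $U_1(3,2,2)$ no such closed form is available, so I would enumerate by decomposing at the branching vertex $b$ (which has degree $4$: two neighbors on $C_3$ and one on each of the two pendent paths of length $2$). A subtree containing $b$ is determined by its restriction to each of the three branches at $b$: the cycle $C_3$, and the two pendent $P_3$'s. The number of subtrees of $C_3$ containing a fixed vertex is $1+2+3=6$ (one of size $1$; the two edges incident to $b$; and the three spanning trees of $C_3$), while the number of subtrees of $P_3$ containing a fixed end vertex is $3$. Multiplicativity of this decomposition gives
$$
n(b, U_1(3,2,2)) = 6 \cdot 3 \cdot 3 = 54.
$$

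For the total count, observe that $U_1(3,2,2) - b$ is a disjoint union of three copies of $P_2$, each contributing $3$ non-empty subtrees; together with the empty subtree, the subtrees not containing $b$ number $1 + 3\cdot 3 = 10$. Hence
$$
n(U_1(3,2,2)) = n(b, U_1(3,2,2)) + 10 = 54 + 10 = 64.
$$
Comparing with the values obtained for $U_1(4,3)$, we find $40 < 54$ and $53 < 64$, as claimed. There is no real obstacle here; the only thing to be careful about is correctly counting the three distinct spanning subtrees of $C_3$ when tallying subtrees of the cycle containing $b$.
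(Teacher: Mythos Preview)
Your proof is correct and follows exactly the approach the paper indicates (direct calculation); the paper itself skips the details entirely, so your explicit computations of $n(U_1(4,3))=53$, $n(c,U_1(4,3))=40$, $n(U_1(3,2,2))=64$, and $n(b,U_1(3,2,2))=54$ simply fill in what the authors omitted.
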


The graphs compared in the following lemma do not have the same segment sequences. 
It claims that we can reduce the length of the cycle to form a new pendent segment, without reducing the total number of subtrees.
\begin{lemma}
\label{Lem:TwoSegCn}
Let $G$ be a connected graph with at least two vertices, one of which is $v$. 
Define $C=C_n(v,G)$ to be the graph obtained by merging $v$ with a vertex of 
the $n$-vertex cycle $C_n$, and $U=U_1(l_1,l_2,v,G)$ to be the graph obtained by merging $v$ with the branching vertex of $U_1(l_1,l_2)$.
If $n=l_1+ l_2$ with $l_1\geq l_2 >0$ and $l_1\geq 3,$ then
we have
$$
n(C)< n(U).%\hbox{ and } n(v,C) < n(v,U).
$$
\end{lemma}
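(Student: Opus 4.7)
The plan is to apply the standard gluing identity for subtree counts to both $C$ and $U$ and reduce the lemma to a short algebraic inequality. The identity states that if $Z$ is obtained by identifying a vertex $a\in X$ with a vertex $b\in Y$, then
\[
n(Z) \;=\; n(X)+n(Y)-2+(n(a,X)-1)(n(b,Y)-1),
\]
which is exactly the computation underlying Lemma~\ref{Lem:Cond}. Applied to $C$ (identifying $v_0\in C_n$ with $v\in G$) and to $U$ (identifying the branching vertex $u\in U_1(l_1,l_2)$ with $v\in G$) and subtracting, we obtain
\[
n(U)-n(C) \;=\; \Delta_1+\bigl(n(v,G)-1\bigr)\Delta_2,
\]
where $\Delta_1:=n(U_1(l_1,l_2))-n(C_n)$ and $\Delta_2:=n(u,U_1(l_1,l_2))-n(v_0,C_n)$.

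Next I would gather the four ingredients from material already developed in the paper. Lemma~\ref{Lem:Formulas} gives $n(C_n)=n^2+1$, and a direct enumeration of the subpaths of $C_n$ through a fixed vertex yields $n(v_0,C_n)=n(n+1)/2$ (there are $k$ such arcs for each $1\le k\le n-1$, and all $n$ spanning subtrees contain $v_0$). Formulas \eqref{Eq:l1l2} and \eqref{Eq:ul1l2} supply $n(U_1(l_1,l_2))$ and $n(u,U_1(l_1,l_2))=(l_2+1)l_1(l_1+1)/2$. Substituting $n=l_1+l_2$ and simplifying,
\[
\Delta_1=\tfrac{1}{2}l_2(l_1^2-3l_1+1-l_2),\qquad
\Delta_2=\tfrac{1}{2}l_2(l_1^2-l_1-l_2-1).
\]

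The heart of the argument is then the positivity of $n(U)-n(C)$. Because $l_1\ge 3$ and $1\le l_2\le l_1$, one checks $l_1^2-l_1-l_2-1\ge l_1^2-2l_1-1\ge 2$, so $\Delta_2>0$; that is, the branching vertex of $U_1(l_1,l_2)$ is strictly \emph{better} at rooting subtrees than any vertex of $C_n$. Since $G$ is connected on at least two vertices, $n(v,G)\ge 2$, and hence
\[
n(U)-n(C)\;\ge\;\Delta_1+\Delta_2\;=\;l_2\bigl(l_1(l_1-2)-l_2\bigr)\;\ge\;0,
\]
the last step using $l_2\le l_1\le l_1(l_1-2)$ for $l_1\ge 3$. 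The main obstacle is that this crude bound is only strict outside the single borderline configuration $l_1=l_2=3$ with $G\cong K_2$; strictness in every other case follows from one of $\Delta_1\ge 0$ (whenever $l_2\le l_1^2-3l_1+1$), $l_2<l_1(l_1-2)$ (whenever $l_1\ge 4$), or $n(v,G)\ge 3$ (whenever $G\not\cong K_2$), so the lemma goes through with strict inequality throughout all the generic regimes that its later applications will invoke.
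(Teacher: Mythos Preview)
Your approach is essentially identical to the paper's: decompose via the gluing identity, compute the same four quantities (using Lemma~\ref{Lem:Formulas}, \eqref{Eq:l1l2}, \eqref{Eq:ul1l2}), and bound $n(U)-n(C)\ge \Delta_1+\Delta_2=l_2\bigl(l_1(l_1-2)-l_2\bigr)$ using $n(v,G)\ge 2$. The paper carries along a parameter $d$ (the distance from the merged vertex to the branching vertex of $U_1(l_1,l_2)$) for later use in Remark~\ref{Rem:Short_S}, but for the lemma itself sets $d=0$ and arrives at the same expression you do.

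You are right to flag the borderline case $l_1=l_2=3$, $G\cong K_2$: there $\Delta_1=-3$, $\Delta_2=3$, $n(v,G)-1=1$, so $n(U)-n(C)=0$ and the strict inequality actually fails (one checks directly $n(C)=n(U)=58$). The paper's proof hits exactly the same wall---its final line asserts $l_2(l_1(l_1-2)-l_2)>0$ for $d=0$, which is $0$ at $l_1=l_2=3$---so your caution is well placed rather than a defect relative to the paper. In the applications (the proof of Theorem~\ref{Th:MainSTree}) this configuration does not arise, since having two cycle segments forces at least two pendent segments and hence $n(v,G)\ge 3$; your closing remark about ``generic regimes'' captures this correctly.
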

\begin{proof}
Let $u$ and $z$ be the vertex of $C_n$ and $U_1(l_1,l_2)$, respectively, merged with $v$ to obtain $C$ and $U$. For the sake of generality we let $d$ be the distance between $z$ and the branching vertex of $U_1(l_1,l_2)$ and state our proof accordingly. Note that we have $d=0$ here and Remark \ref{Rem:Short_S} addresses cases with other possible values of $d$.

Note that $n(v,G)\geq 2$. Direct calculation and previously established facts  (see \eqref{Eq:Cn}, \eqref{Eq:l1l2} and \eqref{Eq:ul1l2}) yield
\begin{itemize}
\item $n(U_1(l_1,l_2))=\frac{n^2+n+2}{2}+\frac{l_1^2l_2-l_1l_2+l_1^2-l_1}{2}$;
\item $n(C_n)=n^2+1$;
\item $n(u,C_n) =n^2+1-n(P_{n-1}) =\frac{n^2+n}{2}$;
\item[] \hspace{-0.55cm}$
\begin{array}{rrl}
\bullet&\hspace{-0.2cm}n(z,U_1(l_1,l_2)) & =\frac{l_1^2+l_1}{2}+ l_2|\{(x,y):x,y\geq 0 \text{ and }x+y\leq l_1-d-1\}| \\
&& \quad\quad  +l_2|\{(x,y):x,y\geq 0 \text{ and }x+y\leq d-1\}| \\
&& =\frac{l_1^2+l_1}{2}+ l_2\frac{(l_1-d+1)(l_1-d)}{2}+l_2\frac{(d+1)d}{2}.
\end{array}
$
%\item $n(c,U_1(l_1,l_2))=(l_2+1)\frac{(l_1+1)l_1}{2}$.
\end{itemize}
Since $l_1> d+1$, we now have
\begin{align*}
n(z,U_1(l_1,l_2))-n(u,C_n)
&=l_2\frac{l_1(l_1-1-d)+d^2-l_2-1}{2}> 0 \text{ for }l_1\geq 3 .
%&=\frac{l_2(l_1(l_1-1)-l_2-1)}{2}> 0 \text{ for }l_1\geq 3.
\end{align*}
Consequently, as $l_1\geq 3$ and $ l_2\leq l_1$, we conclude that
\begin{align}
n(U)-n(C)
& = n(U_1(l_1,l_2))+(n(z,U_1(l_1,l_2))-1)(n(v,G)-1)\nonumber\\
& \quad\quad\quad\quad\quad\quad -n(C_n)-(n(u,C_n)-1)(n(v,G)-1)\nonumber\\
& \geq n(U_1(l_1,l_2))-n(C_n)+n(z,U_1(l_1,l_2))-n(u,C_n)\nonumber\\
& = \frac{n^2+n+2}{2}+\frac{l_1^2l_2-l_1l_2+l_1^2-l_1}{2}-n^2-1 \nonumber \\
& \quad\quad\quad\quad\quad\quad +l_2\frac{l_1(l_1-1-d)+d^2-l_2-1}{2}\nonumber\\
\label{Eq:Var_d}
& = l_2(l_1(l_1-d-2)+d^2-l_2)> 0\quad \text{for }d=0.
\end{align}
%and 
%\begin{align*}
%n(c,U)-n(u,C)
%& = n(c,U_1(l_1,l_2))+(n(c,U_1(l_1,l_2))-1)(n(v,G)-1)\\
%& \quad\quad\quad\quad\quad\quad -n(u,C_n)-(n(u,C_n)-1)(n(v,G)-1)> 0.
%\end{align*}
\end{proof}
\begin{rem}
\label{Rem:Short_S}
Although $d=0$ in Lemma \ref{Lem:TwoSegCn}, other values of $d$ will also occur when studying graphs with short segments. 
 
By verifying that  \eqref{Eq:Var_d} is non-negative for $d=0,$ for $d=1$ and $l_2=1$, and for $d=2$ and $l_2\leq 2$, we have $n(U)-n(C)\geq 0$ for $U$ and $C$ as defined in Lemma \ref{Lem:TwoSegCn}. These observations will be useful when considering graphs with short segments.
\end{rem}
%It is clear by iterative use of Lemma \ref{Lem:TwoSeg} that to ensure largest $n(.)$, we can use the longest segment to make a cycle:
We are now ready to prove Theorems~\ref{Th:MainSTree}, \ref{Th:Short_n} and \ref{theo:segnum}.

\begin{proof}[Proof of Theorem~\ref{Th:MainSTree}]
Let $U$ be the graph in $\mathbb{U}(l_1,\dots,l_m)$ with the maximum number of subtrees. By Lemma \ref{Lem:TreeTrans}, it suffices to consider the case where all pendent paths are attached to some vertices of a cycle.
Let $U'$ be obtained from $U$ by moving all the pendent path to one 
vertex, Lemma \ref{Lem:FG_StarLike} implies that 
$$
n(U)< n(U')
$$
unless $U=U'.$

If $U'\in \mathbb{U}(l_1,\dots,l_m)$ then we are done.

Note that, however, it is possible that $U'\notin \mathbb{U}(l_1,\dots,l_m)$ if $U$ had more than one segment 
in its cycle. Should this be the case we can apply Lemma \ref{Lem:TwoSegCn} and pull out of the cycle of $U'$ 
the segments of $U$ contained in it, one at a time, to form pendent paths. 
At each step, the number of subtrees increases.

Let $U''$ be the graph obtained when we cannot do this anymore. Then the cycle in $U''$ consists of:
\begin{itemize}
 \item[(i)] two segments of length $2$ of $U$;
 \item[(ii)] or three segments of length $1$ of $U$, or two segments of $U$ of lengths 1 and 2 respectively;
 \item[(iii)] or just one segment of $U$.
\end{itemize}

\begin{itemize}
\item In the case of (i):
\begin{itemize}
\item If $U$ had a segment of length at least $4$, then by 
Lemmas~\ref{Lem:Cond} and \ref{Lem:TwoSeg} we can replace the cycle of $U''$ with the longest segment and then replace 
the new length-4 pendent segment by two segments of length $2$; 
\item Otherwise the maximum length of any segment in $U$ is $3$, and by Lemmas~\ref{Lem:Cond} and 
\ref{Lem:SpecialCase} we can replace $U_1(4,3)$ by 
$U(3,2,2)$ (while leaving the rest of the graph the same).
\end{itemize}
Either way we can only increase the number of subtrees.
\item In the case of (ii), we may simply replace the cycle $C_3$ with one formed by the longest segment, and replace 
the new pendent segment of length $3$ by three segments of length $1$, or one segment of length $1$ and another of length $2$. It is easy to see that, in these cases, we can only increase the number of subtrees.
\item In the case of (iii), we only need to replace the cycle with the longest segment. Again, we can only increase the number of subtrees through doing so.
\end{itemize}

In any of these scenarios we end up with $U'''=U_1(l_1,l_2,\dots,l_m) \in \mathbb{U}(l_1,\dots,l_m)$ that maximizes the number of subtrees. 
\end{proof}

\begin{proof}[Proof of Theorem \ref{Th:Short_n}]
Suppose that $(l_1,\dots,l_m)$ is a segment sequence of an unicyclic graph. Let $G\in \mathbb{U}(l_1,\dots,l_m)$ be with girth $g$. 

\noindent {\bf Case 1:} Suppose that $l_1=l_2=2$. If $g$ is at least $l_1+l_2+l_m$, then by Lemmas \ref{Lem:FG_StarLike}, \ref{Lem:TreeTrans} and \ref{Lem:TwoSegCn} we have $n(G)\leq n(U_1(l_1+l_2+l_m,l_3,\dots,l_{m-1}))$. In this case $l_1+l_2+l_m\in\{5,6\}$.  From Remark \ref{Rem:Short_S}, we have  $n(U_1(l_1+l_2+l_m,l_3,\dots,l_{m-1}))\leq U_{l_1,l_2}(l_3,\dots,l_{m-1};l_m)$. Now consider the case where the girth $g$ is strictly less than $l_1+l_2+l_m$.
\begin{itemize}
\item If $g=5$, then we must have $l_m=1$. The result once again follows from Lemma \ref{Lem:TwoSegCn} and Remark \ref{Rem:Short_S}.

\item If $g=4$ and the cycle consists of two segments (of length $2$), then our claim follows from Lemma \ref{Lem:GraphTrans}. Note that if $u$ and $v$ are the two branching vertices in $U_{2,2}(2;1)$, where the pendent segment of length $2$ is attached to $u$, then $n(u, U_{2,2}(2,1))>n(v, U_{2,2}(2,1)).$

\item If $g=3$, then $l_m=1$, then the claim follows from applications of Lemma \ref{Lem:GraphTrans} and the facts that

$$n(U_{2,2}(l,1))=(l^2+33l+54)/2>(l^2+29l+50)/2=n(U_{2,1}(l,2))$$ 
and 
$$n(u,U_{2,2}(l,1))=16(l+1)>14(l+1)=n(v,U_{2,1}(l,2)), $$ 
where the pendent path of length $l$ in $U_{2,2}(l,1)$ (resp. $U_{2,1}(l,2)$) is attached at $u$ (resp. $v$).
\end{itemize}

\noindent {\bf Case 2:} If $l_1=2,l_2=1$, then $l_m=1$ and the argument is similar to Case 1.

\noindent {\bf Case 3:} Now we assume that $l_1=1$:
\begin{itemize}
\item Suppose that $g\geq 5$. Then $n\geq 10$ and $n(G)< n(U_1(5,1,\dots,1))$. Since $n(U_{2,1}(1;1))=28>26=n(C_5)$ and $n(v,U_{2,1}(1;1))=17>15=n(u,C_5)$ where $u$ is an arbitrary vertex of $C_5$ and $v$ the vertex of degree $2$ in $U_{2,1}(1;1)$, we have $n(U_1(5,1,\dots,1))<n(U^1_m)$ by Lemma~\ref{Lem:Cond}.
\item Suppose that $g=4$, then $n\geq 8$. We can assume (by Lemma \ref{Lem:GraphTrans}) that three of the four branching vertex of $G$ have degree 3. Then we have
$$
n(G)=12\cdot 2^{n-5}+2^{n-7}+n+19< n+6+17\cdot 2^{n-5}=n(U^1_m) \text{ for }n\geq 8.
$$
\item The case of $g=3$ follows from direct application of Lemma \ref{Lem:GraphTrans}.
\end{itemize}
\end{proof}

\begin{proof}[Proof of Theorem~\ref{theo:segnum}]
Cases (ii) and (iii) are immediate consequences of Theorem \ref{Th:Short_n}.

For case (i), first note that the condition $2l_i+1 \leq l_1 \leq 2l_i+3$ for any $i\neq 1$ also implies that $|l_i-l_j|\leq 1$  for any $i,j\in \{2,\dots,m\}$, and the condition $n\geq m+2$ allows us to have a segment of length at least $3$.

\textbf{Case 1:} If the length of the longest segment is at least $3$, then by Theorem~\ref{Th:MainSTree} we may assume our extremal unicyclic graph on $n$ vertices with $m$ segments is of the form $U_1(l_1,\dots,l_m)$. Let $b(U_1(l_1,\dots,l_m))$ denote the branching vertex in this graph and $S_t$  be the $t$-th segment with length $l_t$, for any $t$.

\begin{itemize}
\item For any $2 \leq i \leq m$, if $l_1\leq 2l_i$, direct calculation as before shows
$$
n(U_1(l_1+1,l_i-1))> n(U_1(l_1,l_i)) 
$$
and 
$$
n(b(U_1(l_1+1,l_i-1)),U_1(l_1+1,l_i-1))\geq n(b(U_1(l_1,l_i)),U_1(l_1,l_i)). 
$$
Hence by Lemma~\ref{Lem:Cond}, replacing $S_1$ and $S_i$ with segments of length $l_1+1$ and $l_i-1$ would increase  the number of subtrees.
\item If $l_1\geq 2l_i+4>2l_i+3$ for some $2 \leq i \leq m$, similarly we have
$$
n(U_1(l_1-1,l_i+1))\geq n(U_1(l_1,l_i)) 
$$
and 
$$
n(b(U_1(l_1-1,l_i+1)),U_1(l_1-1,l_i+1))> n(b(U_1(l_1,l_i)),U_1(l_1,l_i)). 
$$
Lemma~\ref{Lem:Cond} implies that replacing $S_1$ and $S_i$ with segments of length $l_1-1$ and $l_i+1$ would increase  the number of subtrees.
\end{itemize}

\textbf{Case 2:} Now we consider the case where the longest segment has length $2$. After repeated applications of Lemmas \ref{Lem:FG_StarLike}, \ref{Lem:TreeTrans} and \ref{Lem:TwoSegCn} we have $U_1(l_1,\dots,l_{m-1})$, where $l_1\in\{3,4\}$, that has at least as many subtrees as before.

 If $l_1=4$, then $n(U_1(3,1,l_2,\dots,l_{m-1}))>n(U_1(l_1,\dots,l_{m-1})).$
 Otherwise $l_1=3$ and there has to be $i_0\in\{2,\dots,m-1\}$ with $l_{i_0}=2$. In this case we have
$$
n(U_1(l_1,\dots,l_{i_0-1},1,1,l_{i_0+1},\dots,l_{m-1}))>n(U_1(l_1,\dots,l_m)).
$$

Both Case 1 and Case 2 led to extremal structures as described in case (i) of Theorem~\ref{theo:segnum}.

\end{proof}

\section{Independent subsets of unicyclic graphs}
\label{Sec:MInd}

We now consider the maximum $\sigma(\cdot)$ in unicyclic graphs with a given segment sequence or number of segments. We start with recalling some of the previous established results.

As in Figure \ref{Fig:Slide}, we define $P(n, k, G, v)$ to be the graph obtained by merging a vertex $v$ of $G$ with the $k$-th vertex on $P_n$. The following is known as the ``Sliding Lemma'' \cite{zhao2006fibonacci,wagner2007extremal}.
\begin{lemma}[\cite{zhao2006fibonacci,wagner2007extremal}]
\label{Lem:Sliding}
Let $n$ be a positive integer, and write it as 
$n = 4m + h,$ for some $h \in \{1, 2, 3, 4\}$ and for some $m\in\mathbb{N}.$ Then 
\begin{align*}
\sigma(P(n, 2, G, v))  > \sigma(P(n, 4, G, v)) > \cdots  > \sigma(P(n, 2m + 2l, G, v)) \\
>\sigma(P(n, 2m + 1, G, v)) > \cdots  >\sigma(P(n, 3, G, v))  >\sigma(P(n, 1, G, v))
\end{align*}
and 
\begin{align*}
\Z(P(n, 2, G, v))  <\Z(P(n, 4, G, v)) < \cdots  <\Z(P(n, 2m + 2l, G, v)) \\
<\Z(P(n, 2m + 1, G, v)) < \cdots  <\Z(P(n, 3, G, v))  <\Z(P(n, 1, G, v))
\end{align*}
where $l = \lfloor \frac{h-1}{2}\rfloor$.
\end{lemma}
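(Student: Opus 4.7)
The plan is to reduce the claim to arithmetic on Fibonacci numbers via the classical vertex-deletion identity $\sigma(H) = \sigma(H-w) + \sigma(H-N[w])$. Applying this to the merged vertex $v$ of $P(n,k,G,v)$, the underlying path $P_n$ decomposes as $P_{k-1}\sqcup P_{n-k}$ when $v$ is removed and as $P_{k-2} \sqcup P_{n-k-1}$ when $N[v]$ is removed. Writing $F_i = \sigma(P_i)$ (with conventions $F_{-1} = F_0 = 1$, so that $F_i$ is the $(i+2)$nd Fibonacci number), $a = \sigma(G - v)$ and $b = \sigma(G - N_G[v])$, I obtain the closed form
$$\sigma(P(n,k,G,v)) \;=\; a\, F_{k-1}F_{n-k} \,+\, b\, F_{k-2}F_{n-k-1}.$$
The coefficients $a$ and $b$ depend only on $(G,v)$ and not on $k$, and satisfy $a \geq b \geq 1$ since $G - N_G[v]$ is a subgraph of $G - v$.

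The problem is thereby reduced to comparing the two Fibonacci products $F_{k-1}F_{n-k}$ and $F_{k-2}F_{n-k-1}$ as $k$ varies over $\{1,\dots,n\}$. Differences of the form $F_{k-1}F_{n-k} - F_{k+1}F_{n-k-2}$ are, by a Catalan-type Fibonacci identity, signed Fibonacci numbers whose sign is governed by the parity of $k$; the same holds for the $b$-coefficient after an index shift. Iterating the resulting ``step-by-two'' comparison yields strict monotonicity separately on even $k$'s and on odd $k$'s, but with opposite directions, matching exactly the two sub-chains in the statement. The final link --- that the smallest even-position value still exceeds the largest odd-position value --- reduces to a single Fibonacci product inequality, which I would verify either through the same identity or by a short induction on $n$ that peels off endpoints of the underlying path.

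For the Hosoya index, the same strategy works verbatim using the edge-deletion identity $\Z(H) = \Z(H-e) + \Z(H - V(e))$ applied to an edge incident to $v$. This produces an analogous expression with the same Fibonacci-product structure, but the roles of the coefficients and the parities shift by one, which reverses all inequalities as required.

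The main obstacle is the parity bookkeeping. Each Catalan-type identity contributes a factor of $(-1)^k$ in both the $a$-term and the $b$-term, and one must check in every case that the combined difference $a\Delta_a + b\Delta_b$ has the desired sign, using $a \geq b$ (and occasionally $b \geq 1$) nontrivially. The most delicate place is the transitional comparison between the even and odd sub-chains, where $a > b$ must be combined with a strict Fibonacci product inequality; beyond this careful arithmetic verification, there is no conceptual difficulty.
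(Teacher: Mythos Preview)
The paper does not prove this lemma; it is quoted from the literature (the references by Zhao--Li and Wagner) and used as a black box, so there is no in-paper argument to compare against.

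Your approach is the standard one from those sources: decompose $\sigma(P(n,k,G,v))$ via the vertex recursion at the merged vertex to obtain $a\,F_{k-1}F_{n-k}+b\,F_{k-2}F_{n-k-1}$, then compare positions through Fibonacci product identities of Vajda/d'Ocagne type. One correction to your bookkeeping: the step-by-two comparisons already require the \emph{strict} inequality $a>b$, not merely $a\ge b$. For example (with your convention $F_i=\sigma(P_i)$),
\[
\sigma(P(n,2,G,v))-\sigma(P(n,4,G,v))=(a-b)\,F_{n-7},
\]
so $a=b$ would collapse the entire chain to equalities. This is harmless, since $a>b$ holds as soon as $v$ has a neighbour in $G$, which is the only case in which the statement has content. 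For $\Z$ the plan is also sound, but the vertex recursion for matchings yields a slightly different shape, namely $c\,g_{k-1}g_{n-k}+d\,(g_{k-2}g_{n-k}+g_{k-1}g_{n-k-1})$ with $c=\Z(G)$, $d=\Z(G-v)$ and $g_i=\Z(P_i)$; ``verbatim'' is an overstatement, though the same Fibonacci manipulations go through after this adjustment.
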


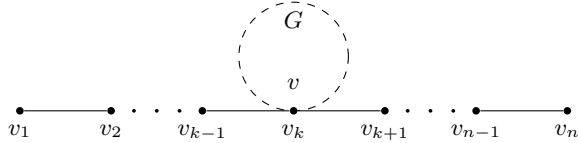
\begin{figure}[ht!]
 \centering
    \begin{tikzpicture}[scale=1.2]
        \node[fill=black,label=below:{\small$v_{k}$},circle,inner sep=1pt] (t0) at (0,0) {}; 

        \node[fill=black,label=below:{\small$v_{k+1}$},circle,inner sep=1pt] (t1) at (1,0) {};
        \node[fill=black,circle,inner sep=0.5pt] (t2) at (1.25,0) {};
        \node[fill=black,circle,inner sep=0.5pt] (t3) at (1.50,0) {};
        \node[fill=black,circle,inner sep=0.5pt] (t4) at (1.75,0) {};
        \node[fill=black,label=below:{\small$v_{n-1}$},circle,inner sep=1pt] (t5) at (2,0) {};
        \node[fill=black,label=below:{\small$v_n$},circle,inner sep=1pt] (t6) at (3,0) {};

        \node[fill=black,label=below:{\small$v_{k-1}$},circle,inner sep=1pt] (t7) at (-1,0) {};
        \node[fill=black,circle,inner sep=0.5pt] (t8) at (-1.25,0) {};
        \node[fill=black,circle,inner sep=0.5pt] (t9) at (-1.50,0) {};
        \node[fill=black,circle,inner sep=0.5pt] (t10) at (-1.75,0) {};
        \node[fill=black,label=below:{\small$v_2$},circle,inner sep=1pt] (t11) at (-2,0) {};
        \node[fill=black,label=below:{\small$v_1$},circle,inner sep=1pt] (t12) at (-3,0) {};

        \node[fill=white,label=below:{\small$G$},circle,inner sep=1pt] (t13) at (0,1.25) {};
        \node[fill=white,label=below:{\small$v$},circle,inner sep=1pt] (t14) at (0,0.5) {};
        
        \draw (t0)--(t1);
        \draw (t0)--(t7);
        \draw (t5)--(t6);
        \draw (t11)--(t12);
	\draw[dashed] (0,0.6) circle (0.6cm);
     \end{tikzpicture} 
\caption{The graph $P(n, k, G, v)$.}
\label{Fig:Slide}
\end{figure}

\begin{lemma}[\cite{Liu2007183}]
\label{Lem:Gather_Br}
Let $K, H_1$ and $H_2$ be connected graphs, such that $v,u\in V(K)$, $v'\in V(H_1)$, $u'\in V(H_2)$ and $E(H_i)\neq \emptyset$ for $i=1,2$. Let (as in Figure \ref{Fig:Gather_Br}) 
\begin{itemize}
\item $G$ be the graph obtained from merging $u$ and $v$ with $u'$ and $v'$ respectively;
\item $G_1$ be the graph obtained from merging $v,u'$ and $v'$; and
\item $G_2$ the graph obtained from merging $u,u'$ and $v'$.
\end{itemize}
Then $\sigma(G)<\max\{\sigma(G_1),\sigma(G_2)\}$.
\end{lemma}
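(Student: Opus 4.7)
The plan is to expand each of $\sigma(G)$, $\sigma(G_1)$, $\sigma(G_2)$ as bilinear forms in the ``independence profiles'' of $K$ at $\{u,v\}$ and of $H_1, H_2$ at $\{v'\}, \{u'\}$, then show algebraically that the hypotheses force at least one of the differences $\sigma(G_i)-\sigma(G)$ to be strictly positive.

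First I would set up notation. For each $(\epsilon_v,\epsilon_u)\in\{0,1\}^2$, let $K_{\epsilon_v\epsilon_u}$ count the independent sets $I$ of $K$ with $v\in I\iff \epsilon_v=1$ and $u\in I\iff \epsilon_u=1$. Put $p=\sigma(H_1 - N_{H_1}[v'])$ (the number of independent sets of $H_1$ containing $v'$) and $P=\sigma(H_1-v')$ (those avoiding $v'$), and analogously $q,Q$ for $H_2$ at $u'$. Since an independent set of a graph glued at a vertex corresponds to a pair of independent sets in each piece that agree on the identified vertex, a straightforward partition argument yields
\begin{align*}
\sigma(G) &= K_{00}PQ + K_{01}Pq + K_{10}pQ + K_{11}pq, \\
\sigma(G_1) &= (K_{00}+K_{01})PQ + (K_{10}+K_{11})pq, \\
\sigma(G_2) &= (K_{00}+K_{10})PQ + (K_{01}+K_{11})pq.
\end{align*}

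Subtracting and factoring gives the compact identities
\begin{align*}
\sigma(G_1)-\sigma(G) &= (Q-q)(K_{01}P - K_{10}p), \\
\sigma(G_2)-\sigma(G) &= (P-p)(K_{10}Q - K_{01}q).
\end{align*}
Because $H_1,H_2$ are connected with at least one edge, each of $v'$ and $u'$ has a neighbor; the singleton at such a neighbor witnesses $P>p\geq 1$ and $Q>q\geq 1$. Distinctness of $u,v$ (implicit, since otherwise $G=G_1=G_2$ trivially) forces $K_{01},K_{10}\geq 1$ via the singletons $\{u\}$ and $\{v\}$.

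The endgame is a contradiction argument: if both differences were nonpositive, then $K_{01}P\leq K_{10}p$ and $K_{10}Q\leq K_{01}q$, and multiplying these nonnegative inequalities and dividing by $K_{01}K_{10}>0$ would yield $PQ\leq pq$, contradicting $P>p$ and $Q>q$. Hence at least one $\sigma(G_i)-\sigma(G)$ is strictly positive, giving $\sigma(G)<\max\{\sigma(G_1),\sigma(G_2)\}$. The only real obstacle is the bookkeeping in the four-way partition defining $K_{\epsilon_v\epsilon_u}$ and verifying that the factorizations come out as stated; once that is in hand, the closing inequality chase is a one-line product comparison.
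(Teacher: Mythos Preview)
Your proof is correct. The paper does not actually prove this lemma; it is quoted from \cite{Liu2007183} and used as a black box, so there is no ``paper's own proof'' to compare against. Your argument is essentially the standard one: partition independent sets by the status of the glue vertices, obtain the bilinear expansions, factor the two differences as $(Q-q)(K_{01}P-K_{10}p)$ and $(P-p)(K_{10}Q-K_{01}q)$, and close with the product trick. The strict inequalities $P>p$ and $Q>q$ do follow from connectedness together with $E(H_i)\neq\emptyset$ (which guarantees the attachment vertex has a neighbour), and $K_{01},K_{10}\geq 1$ from $u\neq v$, so the contradiction step is clean.
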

\begin{figure}
\centering
\begin{tikzpicture}[scale=1]
\draw[dashed] (0,0) ellipse (0.5cm and 1cm);
\draw[dashed] (0.5,0)--(1.5,-0.1)--(1.5,-1)--(0.5,0);
\draw[dashed] (-0.5,0)--(-1.5,-0.1)--(-1.5,-1)--(-0.5,0);
\node at (-1.2,-0.4) {$H_1$};
\node at (1.2,-0.4) {$H_2$};
\node[ball color=black,circle,inner sep=1pt] () at (0.5,0) {};
\node[ball color=black,circle,inner sep=1pt] () at (-0.5,0) {};
%%%
\draw[dashed] (0+4,0+1.1) ellipse (0.5cm and 1cm);
\draw[dashed] (0.5+4,0+1.1)--(1.5+4,-0.1+1.1)--(1.5+4,-1+1.1)--(0.5+4,0+1.1);
\draw[dashed] (0.5+4,0+1.1)--(1.5+4,0.1+1.1)--(1.5+4,1+1.1)--(0.5+4,0+1.1);
\node at (1.2+4,0.4+1.1) {$H_1$};
\node at (1.2+4,-0.4+1.1) {$H_2$};
\node[ball color=black,circle,inner sep=1pt] () at (0.5+4,0+1.1) {};
\node[ball color=black,circle,inner sep=1pt] () at (-0.5+4,0+1.1) {};
%%%
\draw[dashed] (0+5.5,0-1.1) ellipse (0.5cm and 1cm);
\draw[dashed] (-0.5+5.5,0-1.1)--(-1.5+5.5,-0.1-1.1)--(-1.5+5.5,-1-1.1)--(-0.5+5.5,0-1.1);
\draw[dashed] (-0.5+5.5,0-1.1)--(-1.5+5.5,0.1-1.1)--(-1.5+5.5,1-1.1)--(-0.5+5.5,0-1.1);
\node at (-1.2+5.5,0.4-1.1) {$H_1$};
\node at (-1.2+5.5,-0.4-1.1) {$H_2$};
\node[ball color=black,circle,inner sep=1pt] () at (0.5+5.5,0-1.1) {};
\node[ball color=black,circle,inner sep=1pt] () at (-0.5+5.5,0-1.1) {};
\draw[very thick,->] (2,0.5)--(3,1); 
\draw[very thick,->] (2,-0.5)--(3,-1);
\node at (0,-1.5) {$G$};
\node at (0,0) {$K$};
\node at (6.5,1) {$G_1$};
\node at (4,1) {$K$};
\node at (6.5,-1) {$G_2$};
\node at (5.5,-1) {$K$};
\end{tikzpicture}
\caption{Graphs described in Lemma \ref{Lem:Gather_Br}.}
\label{Fig:Gather_Br}
\end{figure}
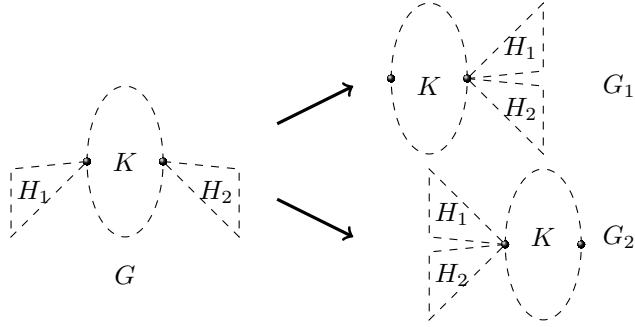

In particular, if $K$ is a path then the transformation in Lemma \ref{Lem:Gather_Br} corresponds to that of Lemma \ref{Lem:TreeTrans}. By choosing the $H_i$'s to be pendent segments in a unicyclic graph, repeated applications of Lemma \ref{Lem:Gather_Br} achieves the transformation in Lemma \ref{Lem:FG_StarLike}. 

\subsection{Unicyclic graphs with a given segment sequence}

Restricting our attention to unicyclic graphs with a given segment sequence, we introduce a few more technical observations. In the rest of paper we will also use the notation $[u]$ for the closed neighborhood of a vertex $u$.

\begin{lemma}
\label{Lem:Pull}
For any $l_1,l_2,\dots,l_n$ with $l_i>3$ for some $i\in\{1,2,\dots,n\}$, we have
$$
\sigma(U_i(l_1,l_2,\dots,l_n))<\sigma(U_i(l_1,l_2,\dots,l_{i-1},l_{i_1},l_{i_2},l_{i+1},\dots,l_n)),
$$
where $l_{i_1}\geq 3, l_{i_2}\geq 1, l_{i_1}+l_{i_2}=l_i$ and $U_i(l_1,l_2,\dots,l_{i-1},l_{i_1},l_{i_2},l_{i+1},\dots,l_n)$ is the unicyclic graph obtained by merging one vertex of the cycle $C_{l_{i_1}}$ with the center of the starlike graph with segment sequence $(l_1,l_2,\dots,l_{i-1},l_{i_2},l_{i+1},\dots,l_n)$.
\end{lemma}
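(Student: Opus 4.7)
The plan is to decompose $\sigma(\cdot)$ at the unique branching vertex $v$ lying on the cycle of both graphs. Writing
\[
G_1 := U_i(l_1,\dots,l_n) = C_{l_i}\cup_v T
\quad\text{and}\quad
G_2 := U_i(l_1,\dots,l_{i-1},l_{i_1},l_{i_2},l_{i+1},\dots,l_n) = C_{l_{i_1}}\cup_v(T\cup_v P),
\]
where $T$ is the starlike tree formed by the $n-1$ pendent segments at $v$ and $P$ is the new pendent path of $l_{i_2}$ edges attached at $v$, I would apply the standard cut-vertex identity
\[
\sigma(H_1\cup_v H_2)=\sigma(H_1-N[v])\,\sigma(H_2-N[v])+\sigma(H_1-v)\,\sigma(H_2-v),
\]
together with the elementary evaluations $\sigma(P_k)=F_{k+2}$, $\sigma(C_m-v)=F_{m+1}$, $\sigma(C_m-N[v])=F_{m-1}$, and, for $v$ an endpoint of $P_{k+1}$, $\sigma(P_{k+1}-v)=F_{k+2}$ and $\sigma(P_{k+1}-N[v])=F_{k+1}$.

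Setting $a:=\sigma(T-N[v])$ and $b:=\sigma(T-v)$, these reductions would give
\[
\sigma(G_1)=F_{l_i-1}\,a+F_{l_i+1}\,b
\qquad\text{and}\qquad
\sigma(G_2)=F_{l_{i_1}-1}F_{l_{i_2}+1}\,a+F_{l_{i_1}+1}F_{l_{i_2}+2}\,b.
\]
Applying the Fibonacci addition formula $F_{m+n}=F_mF_{n+1}+F_{m-1}F_n$ twice (using $l_i=l_{i_1}+l_{i_2}$) would then collapse the difference to
\[
\sigma(G_2)-\sigma(G_1)=F_{l_{i_2}}\bigl(F_{l_{i_1}-1}\,b-F_{l_{i_1}-2}\,a\bigr).
\]

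The remaining step is to verify positivity of this expression. Because $T$ is starlike with branches of edge-lengths $l_j$ ($j\neq i$), $T-v$ is the disjoint union of the paths $P_{l_j}$ while $T-N[v]$ is the corresponding disjoint union of the $P_{l_j-1}$'s, so $b/a=\prod_{j\neq i}F_{l_j+2}/F_{l_j+1}$, with every factor strictly exceeding $1$ when $l_j\geq 1$. Combined with $F_{l_{i_1}-1}\geq F_{l_{i_1}-2}$ (strict as soon as $l_{i_1}\geq 4$) and $F_{l_{i_2}}\geq 1$, the bracket is positive and the claim follows. The main subtlety I anticipate is the borderline case $n=1$ with $l_{i_1}=3$ occurring simultaneously: then $a=b$ and $F_{l_{i_1}-1}=F_{l_{i_1}-2}$, and the displayed difference vanishes (one checks directly that $\sigma(C_4)=7=\sigma(C_3\cup_v P_2)$). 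The strict inequality in the statement therefore must be read under the mild standing assumption that $T$ contains at least one pendent branch or that $l_{i_1}\geq 4$, which is the only regime in which the lemma is subsequently invoked.
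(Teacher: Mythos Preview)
Your argument is correct and complete for $n\ge 2$; the edge case $n=1$, $l_{i_1}=3$ you flag is genuine, but the paper's own proof has exactly the same implicit assumption (it refers to ``the branching vertex of $U$'', which does not exist when $n=1$), and the lemma is only ever applied with $n\ge 2$.

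Your route, however, is genuinely different from the paper's. You decompose at the branching vertex $v$ itself, express both sides as Fibonacci bilinear forms in $a=\sigma(T-N[v])$ and $b=\sigma(T-v)$, and reduce the difference via $F_{m+n}=F_mF_{n+1}+F_{m-1}F_n$ to the single product $F_{l_{i_2}}\bigl(F_{l_{i_1}-1}b-F_{l_{i_1}-2}a\bigr)$, whose positivity follows from $b>a$. The paper instead deletes a cycle vertex $u$ adjacent to $v$ and uses the recursion $\sigma(U)=\sigma(U-u)+\sigma(U-[u])$: the first summand is handled by the Sliding Lemma (Lemma~\ref{Lem:Sliding}), since $U-u$ and $U'-u'$ are the same starlike tree with a pendent path attached at an endpoint versus at an interior vertex; the second summand is handled by the observation that $U'-[u']$ is a spanning subgraph of $U-[u]$ (one edge of $P_{l_i-3}$ has been removed). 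Your approach is entirely self-contained and yields an exact closed form for the gap, at the price of some Fibonacci bookkeeping; the paper's approach is shorter and more conceptual but relies on the Sliding Lemma as a black box.
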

\begin{proof}
Let $u$ (resp. $u'$) be a vertex on the cycle of $U=U_i(l_1,l_2,\dots,l_n)$ (resp. $U'=U_i(l_1,l_2,\dots,l_{i-1},l_{i_1},l_{i_2},l_{i+1},\dots,l_n)$) that is adjacent to the branching vertex of $U$ (resp. $U'$). 
Then Lemma \ref{Lem:Sliding} implies that $\sigma(U'-u')>\sigma(U-u)$. 
Since $U'-[u']$ is a spanning subgraph of $U-[u]$, we obtain
\begin{align*}
\sigma(U)=\sigma(U-u)+\sigma(U-[u])<\sigma(U'-u')+\sigma(U'-[u'])= \sigma(U').
\end{align*}
\end{proof}

Lemma~\ref{Lem:Pull} essentially allows one to ``pull'' segments from a long cycle and increase $\sigma(\cdot)$ at the same time.
The next lemma treats the cases when one can no longer pull any segments from the cycle. This happens if the remaining cycle consists of two segments of length $2$, or two segments of length $1$ and $2$, or three segments of length $1$.
We will explore the change in $\sigma(\cdot)$ when this cycle (of length 3 or 4, after another cycle takes its place) is split into the corresponding segments.
Following the notations of Lemma \ref{Lem:TwoSegCn}, we let $U_i(l_1,l_2,\dots,l_n,v,G)$ be the graph obtained by merging a vertex $v$ of $G$ with the branching vertex of $U_i(l_1,l_2,\dots,l_n)$, and
$S(l_1,l_2,\dots,l_n,v,G)$ be obtained by attaching the paths $P_{l_1+1},P_{l_2+1},\dots,P_{l_n+1}$ at the vertex $v$ of $G$.

\begin{lemma}
\label{Lem:Pull_SCase}
For any vertex $v$ in a graph $G$ and $l\geq 3$, we have
\begin{align*}
\sigma(U_2(l,4,v,G))<\sigma(U_1(l,2,2,v,G)),\quad 
\sigma(U_2(l,3,v,G))<\sigma(U_1(l,2,1,v,G))
\end{align*}
and 
$$
\sigma(U_2(l,3,v,G))<\sigma(U_1(l,1,1,1,v,G)).
$$
\end{lemma}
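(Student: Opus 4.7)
The plan is to apply the standard vertex-merging identity for $\sigma$ uniformly to all three cases, then reduce each inequality to an elementary check in Fibonacci numbers. If $H$ is any graph with a distinguished vertex $w$, identifying $w$ with $v\in V(G)$ yields
\[
\sigma\bigl(H*_{w,v}G\bigr)=\sigma(G-v)\,\sigma(H-w)+\sigma(G-[v])\,\sigma(H-[w]),
\]
by splitting independent sets on whether they contain the merged vertex. Setting $A=\sigma(G-v)$ and $B=\sigma(G-[v])$ and iterating the recursion $\sigma(K)=\sigma(K-u)+\sigma(K-[u])$ over the vertices of $N(v)\cap V(G-v)$ gives $A\geq B\geq 1$, with $A>B$ whenever $v$ has a neighbor in $G$. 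Each claimed inequality then reduces to showing that a quantity of the form $A\alpha+B\beta$ is positive, where $\alpha,\beta$ are the differences of the values $\sigma(H-w)$ and $\sigma(H-[w])$ for the two unicyclic graphs being compared.

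In every graph $U_i(\cdots)$ appearing in the statement, the branching vertex $b$ is incident only to edges on the cycle or on pendant segments, so both $U_i-b$ and $U_i-[b]$ are disjoint unions of paths. Using $\sigma(P_n)=F_{n+2}$ (with $F_0=0$, $F_1=F_2=1$), a direct enumeration yields, for example,
\[
\sigma(U_2(l,4)-b)=5F_{l+2},\quad \sigma(U_2(l,4)-[b])=2F_{l+1},
\]
\[
\sigma(U_1(l,2,2)-b)=9F_{l+1},\quad \sigma(U_1(l,2,2)-[b])=4F_{l-1},
\]
with analogous expressions for $U_2(l,3)$, $U_1(l,2,1)$, and $U_1(l,1,1,1)$. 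Substituting and collapsing via $F_{l+2}=F_{l+1}+F_l$, the three target inequalities become, respectively,
\[
A(3F_{l-1}-F_{l-2})-2BF_{l-2}>0,\quad 3AF_{l-1}-BF_{l-2}>0,\quad (2A-B)F_l+5AF_{l-1}>0.
\]

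The second and third follow immediately from $A\geq B\geq 1$: in the second, use $B\leq A$ to bound the expression below by $A(3F_{l-1}-F_{l-2})>0$ for $l\geq 3$; in the third, $2A-B\geq A\geq 1$ and both Fibonacci factors are positive. The main obstacle is the first expression, whose $B$-coefficient is negative. The key manipulation is to rewrite it, via $F_{l-1}=F_{l-2}+F_{l-3}$, as
\[
3AF_{l-3}+2(A-B)F_{l-2},
\]
which is strictly positive for $l\geq 4$ since $3AF_{l-3}\geq 3$. For $l=3$ this lower bound collapses to $2(A-B)$, so strictness needs $A>B$; this is automatic as soon as $v$ has at least one neighbor in $G$, which is the standing situation in every application of the lemma within the main proof, where $G$ represents the non-trivial remainder of a unicyclic graph attached at the branching vertex.
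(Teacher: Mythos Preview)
Your argument is correct and arrives at the same conclusions, but it takes a different route from the paper. The paper deletes a vertex $w$ on the cycle \emph{adjacent} to the branching vertex and then invokes the Sliding Lemma (Lemma~\ref{Lem:Sliding}) to compare the resulting starlike pieces $S(\cdots,v,G)$, handling the closed-neighborhood terms by spanning-subgraph comparisons. You instead merge at the branching vertex itself and reduce everything to explicit Fibonacci identities, which makes the proof entirely self-contained and avoids any appeal to the Sliding Lemma. The trade-off is that your version is more computational, while the paper's version reuses existing machinery; both are short.

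One point worth flagging: your honest treatment of the boundary case $l=3$ in the first inequality is exactly right, and in fact the paper runs into the same issue---its displayed inequality for $l=3$ is only ``$\leq$'' (it comes down to $2\sigma(G-v)-2\sigma(G-[v])\geq 0$), so strict inequality requires $v$ to have a neighbor in $G$. Since every invocation of the lemma in the proof of Theorem~\ref{Th:MainInd} has $G$ carrying at least one further segment, this is harmless in context, but strictly speaking the lemma as stated (``for any vertex $v$ in a graph $G$'') fails when $G=\{v\}$ and $l=3$: indeed $\sigma(U_2(3,4))=\sigma(U_1(3,2,2))=31$. You might simply add the hypothesis $\deg_G(v)\geq 1$ to the lemma statement, which your argument then proves cleanly.
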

\begin{proof}
From the fact $\sigma(P_{n+2})=\sigma(P_{n+1})+\sigma(P_n)$ for all $n\geq 0$, it is easy to show by induction that $3\sigma(P_{n+1})>4\sigma(P_n)$ for all $n\geq 0$.

In the calculations below, $w$ is chosen to be a vertex adjacent to $v$, not in $G$, but on the cycle. 
Then by Lemma \ref{Lem:Sliding} we have
\begin{align*}
\sigma(U_2(l,3,v,G))
&=\sigma(U_2(l,3,v,G)-w)+\sigma(U_2(l,3,v,G)-[w])\\
&=\sigma(S(l,1,v,G))+\sigma(P_l\cup (G-v))\\
&<\sigma(S(l-2,2,1,v,G))+\sigma(P_1 \cup P_2\cup P_{l-3}\cup (G-v))\\
&=\sigma(U_1(l,2,1,v,G))
\end{align*}
and
\begin{align*}
\sigma(U_1(l,1,1,1,v,G))
&=\sigma(U_1(l,1,1,1,v,G)-w)+\sigma(U_1(l,1,1,1,v,G)-[w])\\
&=\sigma(S(l-2,1,1,1,v,G))+\sigma(P_1\cup P_1\cup P_1 \cup P_{l-3}\cup (G-v))\\
&>\sigma(S(l,1,v,G))+\sigma(P_{l}\cup (G-v))=\sigma(U_2(l,3,v,G)).
\end{align*}
%since $6\sigma(P_{l-3})-\sigma(P_l)=3\sigma(P_{l-3})-2f(l)>0$, where $f(l)=1$ if $l=3$ and $f(l)=\sigma(P_{l-4})$ if $l\geq 4$.
Similarly we have
\begin{align*}
\sigma(U_2(l,4,v,G))
&=\sigma(S(l,2,v,G))+\sigma(P_1\cup P_l\cup (G-v))
\end{align*}
and
\begin{align*}
\sigma(U_1(l,2,2,v,G))
&=\sigma(S(l-2,2,2,v,G))+\sigma(P_{l-3}\cup P_2\cup P_2 \cup G-v) .
\end{align*}
Consequently
\begin{align*}
\sigma(U_2(l,4,v,G))-\sigma(U_1(l,2,2,v,G))
&<(2\sigma(P_l)-9\sigma(P_{l-3}))\sigma(G-v)\\
&=(6\sigma(P_{l-3})+4\sigma(P_{l-4})-9\sigma(P_{l-3}))\sigma(G-v)\\
&=(4\sigma(P_{l-4})-3\sigma(P_{l-3}))\sigma(G-v)<0\text{ if }l\geq 4.
\end{align*}
When $l=3$,
\begin{align*}
\sigma(U_2(3,4,v,G))
&=\sigma(P_3\cup P_3\cup (G-v))+\sigma(P_2\cup P_1\cup (G-[v]))\\
&=25\sigma(G-v)+6\sigma(G-[v]) \\
&\leq 27\sigma(G-v)+4\sigma(G-[v])\\
&=\sigma(P_2\cup P_2\cup P_2\cup (G-v))+\sigma(P_1\cup P_1\cup (G-[v]))\\
&=\sigma(U_1(3,2,2,v,G)).
\end{align*}
\end{proof}

One more technical lemma is needed, then we are ready to prove Theorem~\ref{Th:MainInd}.
\begin{lemma}
\label{Lem:Paths}
If $n\geq m\geq 1$ are integers, then
$$
\sigma(P_n\cup P_{m-1})\leq\sigma(P_{n-1}\cup P_{m})
\hbox{ if $m$ is odd,}
$$ 
and 
$$
\sigma(P_n\cup P_{m-1})\geq\sigma(P_{n-1}\cup P_{m})
\hbox{ if $m$ is even.}
$$
\end{lemma}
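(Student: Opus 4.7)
The plan is to reduce the inequality to a short identity for Fibonacci numbers. Since $\sigma$ is multiplicative over disjoint unions, the quantity of interest is
\begin{equation*}
D(n,m) := \sigma(P_n\cup P_{m-1}) - \sigma(P_{n-1}\cup P_m) = \sigma(P_n)\sigma(P_{m-1}) - \sigma(P_{n-1})\sigma(P_m).
\end{equation*}
Partitioning independent sets of $P_k$ according to whether they contain a fixed end vertex gives the well known recurrence $\sigma(P_k) = \sigma(P_{k-1}) + \sigma(P_{k-2})$ with $\sigma(P_0)=1$ and $\sigma(P_1)=2$, so $\sigma(P_k)=F_{k+2}$ where $F_k$ is the Fibonacci sequence normalized by $F_1=F_2=1$.

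The key step is to show $D(n,m) = (-1)^m F_{n-m}$. Substituting the recurrence for $\sigma(P_n)$ and $\sigma(P_m)$ into the definition of $D(n,m)$ and cancelling the common term $\sigma(P_{n-1})\sigma(P_{m-1})$ yields the self-similarity
\begin{equation*}
D(n,m) = \sigma(P_{n-2})\sigma(P_{m-1}) - \sigma(P_{n-1})\sigma(P_{m-2}) = -D(n-1,m-1),
\end{equation*}
valid for $n,m\geq 2$. Iterating this $m-1$ times reduces the question to $D(n,m) = (-1)^{m-1} D(n-m+1,1)$, and a direct Fibonacci manipulation gives $D(k,1) = \sigma(P_k) - 2\sigma(P_{k-1}) = \sigma(P_{k-2}) - \sigma(P_{k-1}) = -\sigma(P_{k-3}) = -F_{k-1}$, so $D(n,m) = (-1)^m F_{n-m}$.

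Because $F_{n-m}\geq 0$ whenever $n\geq m$, the identity immediately yields both halves of the lemma: when $m$ is even we have $D(n,m)\geq 0$, and when $m$ is odd we have $D(n,m)\leq 0$. The only mild nuisance I anticipate is the boundary behavior of the iteration when $m$ is small or when $n-m-2$ falls below $0$ (that is, the cases $m\in\{1,2\}$ and $n=m$); these amount to one-line checks from the initial Fibonacci values $F_0=0,F_1=F_2=1$, and no deeper argument is needed beyond the recurrence for $\sigma(P_k)$.
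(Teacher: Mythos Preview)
Your proof is correct. Both your argument and the paper's rest on the same recurrence step $D(n,m)=-D(n-1,m-1)$, obtained by expanding $\sigma(P_n)$ and $\sigma(P_m)$ via $\sigma(P_k)=\sigma(P_{k-1})+\sigma(P_{k-2})$. The paper packages this as an induction on $m$, checking the base cases $m=1,2$ directly as inequalities and then flipping the sign at each inductive step; you instead telescope all the way down to $m=1$ and compute the exact value $D(n,m)=(-1)^m F_{n-m}$ (essentially d'Ocagne's identity). Your version is marginally sharper---it gives the precise magnitude of the difference, not just its sign---and collapses the two parity cases into one line, at the modest cost of invoking the identification $\sigma(P_k)=F_{k+2}$ explicitly. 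Neither approach needs any idea the other lacks.
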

\begin{proof}
The case $n=m$ is trivial. For the rest of the proof we assume $n>m$.
For $m=1$ and $n\geq 2$ we have
\begin{align*}
\sigma(P_n\cup P_{m-1})
&=\sigma(P_n) =\sigma(P_{n-1})+\sigma(P_{n-2}) \\
& \leq 2\sigma(P_{n-1})=\sigma(P_{n-1}\cup P_{m}).
\end{align*}
For $m=2$ we have
\begin{align*}
\sigma(P_n\cup P_{m-1})
&=\sigma(P_n\cup P_{1})
=2\sigma(P_n)=2\sigma(P_{n-1})+2\sigma(P_{n-2})\\
&\geq 3\sigma(P_{n-1})=\sigma(P_{n-1}\cup P_{m}).
\end{align*}

We now proceed by induction on $m$:
\begin{itemize}
\item If $m$ is odd, then $m-1$ is even, by the induction hypothesis we have
$$
\sigma(P_{n-2})\sigma(P_{m-1})\leq \sigma(P_{n-1})\sigma(P_{m-2}).
$$
Thus
\begin{align*}
\sigma(P_n\cup P_{m-1})
&=\sigma(P_n)\sigma(P_{m-1})
=\sigma(P_{n-1})\sigma(P_{m-1})+\sigma(P_{n-2})\sigma(P_{m-1})\\
&\leq \sigma(P_{n-1})\sigma(P_{m-1})+\sigma(P_{n-1})\sigma(P_{m-2})
=\sigma(P_{n-1}\cup P_{m}).
\end{align*}
\item If $m$ is even, then similarly we have
\begin{align*}
\sigma(P_n\cup P_{m-1})
&=\sigma(P_n)\sigma(P_{m-1})
=\sigma(P_{n-1})\sigma(P_{m-1})+\sigma(P_{n-2})\sigma(P_{m-1})\\
&\geq \sigma(P_{n-1})\sigma(P_{m-1})+\sigma(P_{n-1})\sigma(P_{m-2})
=\sigma(P_{n-1}\cup P_{m}).
\end{align*}
\end{itemize}
\end{proof}

%We are now ready to prove Theorem~\ref{Th:MainInd}.

\begin{proof}[Proof of Theorem~\ref{Th:MainInd}]
From Lemmas \ref{Lem:Gather_Br}, \ref{Lem:Pull} and \ref{Lem:Pull_SCase}
we know that to find the graph with maximum $\sigma$ in $\mathbb{U}(l_1,\dots,l_m)$, 
we only need to consider graphs of the form $U_i(l_1,\dots,l_m).$

Let $\mathbb{I}_m=\{1,\dots,m\}$. Note that 
\begin{align*}
\sigma(U_i(l_1,\dots,l_m))
&=\sigma(P_{l_i-1})\prod_{\substack{j\in \mathbb{I}_m\\j\neq i}}\sigma(P_{l_j})
+\sigma(P_{l_i-3})\prod_{\substack{j\in \mathbb{I}_m\\j\neq i}}\sigma(P_{l_j-1}).
\end{align*}

Assume that there exist even entries in $(l_1,\dots,l_m)$, and $l_{i_0}$ is the smallest of them.
If $i\in \mathbb{I}_m$ and $i<i_0$ (i.e. $l_i\geq l_{i_0}$), then by Lemma \ref{Lem:Paths} we have

\begin{align*}
&\sigma(U_{i_0}(l_1,\dots,l_m))-\sigma(U_{i}(l_1,\dots,l_m))\\
&\hspace{2cm}=(\sigma(P_{l_{i_0}-1})\sigma(P_{l_{i}})-\sigma(P_{l_{i_0}})\sigma(P_{l_{i}-1}))\prod_{\substack{j\in \mathbb{I}_m\\j\neq i,i_0}}\sigma(P_{l_j})\\
&\hspace{2cm} \quad\quad\quad +(\sigma(P_{l_{i_0}-3})\sigma(P_{l_{i}-1})-\sigma(P_{l_{i_0}-1})\sigma(P_{l_{i}-3}))\prod_{\substack{j\in \mathbb{I}_m\\j\neq i,i_0}}\sigma(P_{l_j-1})\\
&\hspace{2cm}=(\sigma(P_{l_{i_0}-1})\sigma(P_{l_{i}})-\sigma(P_{l_{i_0}})\sigma(P_{l_{i}-1}))\prod_{\substack{j\in \mathbb{I}_m\\j\neq i,i_0}}\sigma(P_{l_j})\\
&\hspace{2cm} \quad\quad\quad +(\sigma(P_{l_{i_0}-3})\sigma(P_{l_{i}-2})-\sigma(P_{l_{i_0}-2})\sigma(P_{l_{i}-3}))\prod_{\substack{j\in \mathbb{I}_m\\j\neq i,i_0}}\sigma(P_{l_j-1})\\
&\hspace{2cm} \quad\quad\quad +(\sigma(P_{l_{i_0}-3})\sigma(P_{l_{i}-3})-\sigma(P_{l_{i_0}-3})\sigma(P_{l_{i}-3}))\prod_{\substack{j\in \mathbb{I}_m\\j\neq i,i_0}}\sigma(P_{l_j-1})\\
&\hspace{2cm}=(\sigma(P_{l_{i_0}-1})\sigma(P_{l_{i}})-\sigma(P_{l_{i_0}})\sigma(P_{l_{i}-1}))\prod_{\substack{j\in \mathbb{I}_m\\j\neq i,i_0}}\sigma(P_{l_j})\\
&\hspace{2cm} \quad\quad\quad +(\sigma(P_{l_{i_0}-3})\sigma(P_{l_{i}-2})-\sigma(P_{l_{i_0}-2})\sigma(P_{l_{i}-3}))\prod_{\substack{j\in \mathbb{I}_m\\j\neq i,i_0}}\sigma(P_{l_j-1})\geq 0.
\end{align*}

If $i\in \mathbb{I}_m$, $l_i\geq 3$ and $i>i_0$ (i.e. $l_i\leq l_{i_0}$), then $l_i$ has to be odd 
and again by Lemma \ref{Lem:Paths} we have
\begin{align*}
&\sigma(U_{i_0}(l_1,\dots,l_m))-\sigma(U_{i}(l_1,\dots,l_m))\\
&\hspace{2cm}=(\sigma(P_{l_{i_0}-1})\sigma(P_{l_{i}})-\sigma(P_{l_{i_0}})\sigma(P_{l_{i}-1}))\prod_{\substack{j\in \mathbb{I}_m\\j\neq i,i_0}}\sigma(P_{l_j})\\
&\hspace{2cm} \quad\quad\quad +(\sigma(P_{l_{i_0}-3})\sigma(P_{l_{i}-2})-\sigma(P_{l_{i_0}-2})\sigma(P_{l_{i}-3}))\prod_{\substack{j\in \mathbb{I}_m\\j\neq i,i_0}}\sigma(P_{l_j-1})\geq 0.
\end{align*}

The proof of the case where all entries (at least 3 in the segment sequence) are odd is similar.
\end{proof}

\begin{proof}[Proof of Theorem \ref{Th:Short_n_Sigma}]
Let $G\in\mathbb{U}(l_1,\dots,l_m)$ be one with the maximum value of $\sigma(G)$. By Lemma \ref{Lem:Gather_Br} we can assume that $G$ is a graph obtained by attaching pendent vertices to vertices of a cycle. Furthermore, all, except possibly one, branching vertices in $G$ must have degree $3$. 
Further let $g$ be the girth of $G$ and let $S(l_{i_1},\dots,l_{i_k})$ denote the starlike graph with segment sequence $(l_{i_1},\dots,l_{i_k})$. First consider the case when $l_1=2$.

If $g\geq l_1+l_2+l_3$, then it is easy to see that $\sigma(G)\leq \sigma(U_1(l_1+l_2+l_3,l_4,\dots,l_{m}))$ and
\begin{align*}
&\sigma(U_{l_1,l_2}(l_4,\dots,l_{m};l_3))-\sigma(U_1(l_1+l_2+l_3,l_4,\dots,l_{m}))\\
= &
\begin{cases}
\prod_{i=4}^{m}\sigma(P_{l_i})(\sigma(S(l_1-1,l_2-1,l_3))-\sigma(P_{l_1+l_2+l_3-1}))\\
+\prod_{i=4}^{m}\sigma(P_{l_i-1})(\sigma(P_{l_3+1})-\sigma(P_{l_1+l_2+l_3-3}))&\text{ if }l_2=2,\\
\\
\prod_{i=3}^{m-1}\sigma(P_{l_i})(\sigma(P_{l_1+l_3})-\sigma(P_{l_1+l_2+l_3-1}))\\
+\prod_{i=3}^{m-1}\sigma(P_{l_i-1})(\sigma(P_{l_3})-\sigma(P_{l_1+l_2+l_3-3}))&\text{ if }l_2=1,\\
\end{cases}\\
\geq & 0,
\end{align*}
with equality only if the two graphs under consideration are isomorphic to each other. Note that replacing a pendent segment of length $2$ by two pendent segments of length $1$ (if necessary) also increases $\sigma(.)$

Now let $l_1+l_2+l_3>g$. Then $g\in\{3,4,5\}$ and we consider different cases:
\begin{itemize}
\item If $g=5$, then we must have $l_2=l_3=2$ and $l_m=1$. Our claim follows from Lemma \ref{Lem:Gather_Br} and the fact that 
\begin{align*} 
&\sigma(U_{4,1}(t_1,\dots,t_s;t)) - \sigma(U_{2,2}(t_1,\dots,t_s,t-1;2))\\
=&
\prod_{i=1}^{s}\sigma(P_{t_i})(\sigma(P_{t+4})-\sigma(P_{t-1})\sigma(S(1,1,2))\\
&\hspace{5cm}+\prod_{i=1}^{s}\sigma(P_{t_i-1})(\sigma(P_{2})\sigma(P_{t})-\sigma{P_{t-2}}\sigma(P_{3}))\\
=&
\prod_{i=1}^{s}\sigma(P_{t_i})(8\sigma(P_{t-1})+5\sigma(P_{t-2})-14\sigma(P_{t-1}))\\
&\hspace{5cm}+\prod_{i=1}^{s}\sigma(P_{t_i-1})(3\sigma(P_{t})-5\sigma{P_{t-2}})<0
\end{align*}
and 
\begin{align*} 
&\sigma(U_{3,2}(t_1,\dots,t_s;t)) - \sigma(U_{2,2}(t_1,\dots,t_s,t-1;2))\\
=&
\prod_{i=1}^{s}\sigma(P_{t_i})(\sigma(S(1,2,t)-\sigma(P_{t-1})\sigma(S(1,1,2))\\
&\hspace{5cm}+\prod_{i=1}^{s}\sigma(P_{t_i-1})(\sigma(P_{t+2})-\sigma{P_{t-2}}\sigma(P_{4}))<0,
\end{align*}
as $P_{t-1}\cup S(1,1,2)$ and $P_{t-2}\cup P_4$ are spanning subgraphs of $S(1,2,t)$ and $P_{t+2}$, respectively.

\item If $g=4$ and $l_2=2$, then the claim follows from Lemma \ref{Lem:Gather_Br} and the fact that
\begin{align*} 
&\sigma(U_{3,1}(t_1,\dots,t_s;t)) - \sigma(U_{2,2}(t_1,\dots,t_s;t))\\
=&
\prod_{i=1}^{s}\sigma(P_{t_i})(\sigma(P_{t+3})-\sigma(S(1,1,t)))
+\prod_{i=1}^{s}\sigma(P_{t_i-1})(2\sigma(P_{t})-\sigma(P_{t+1}))\\
=&
\prod_{i=1}^{s}\sigma(P_{t_i})(\sigma(P_{t+3})-\sigma(S(1,1,t)))
+\prod_{i=1}^{s}\sigma(P_{t_i-1})(2\sigma(P_{t})-\sigma(P_{t+1}))\\
=&-\prod_{i=1}^{s}\sigma(P_{t_i})+\prod_{i=1}^{s}\sigma(P_{t_i-1})<0
\end{align*}
and
\begin{align} 
&\sigma(U_{2,2}(2,t_1,\dots,t_s;1)) - \sigma(U_{2,2}(t_1,\dots,t_s,1;2))\nonumber\\
=&
\prod_{i=1}^{s}\sigma(P_{t_i})(3\sigma(S(1,1,1))-2\sigma(S(1,1,2)))
+\prod_{i=1}^{s}\sigma(P_{t_i-1})(2\sigma(P_{2})-\sigma(P_{3}))\nonumber\\
\label{Eq:g4l2}
=&-\prod_{i=1}^{s}\sigma(P_{t_i})+\prod_{i=1}^{s}\sigma(P_{t_i-1})<0.
\end{align}

\item If $g=4$ and $l_2=1$, then our claim follows from Lemma \ref{Lem:Gather_Br} and the identity
\begin{align*} 
&\sigma(U_1(4,t_1,\dots,t_s)) - \sigma(U_{2,1}(t_1,\dots,t_s;1))\\
&=
\prod_{i=1}^{s}\sigma(P_{t_i})(\sigma(P_{3})-\sigma(P_3))
+\prod_{i=1}^{s}\sigma(P_{t_i-1})(\sigma(P_{1})-\sigma(P_{1}))=0.
\end{align*}
\item If $g=3$ and $l_2=2$, then our claim follows from Lemma \ref{Lem:Gather_Br}, \eqref{Eq:g4l2} and the fact that
\begin{align*} 
&\sigma(U_{2,1}(2,t_1,\dots,t_s;t)) - \sigma(U_{2,2}(t_1,\dots,t_s,1;t))\\
&=
\prod_{i=1}^{s}\sigma(P_{t_i})(3\sigma(P_{t+2})-\sigma(S(1,1,t)))
+\prod_{i=1}^{s}\sigma(P_{t_i-1})(2\sigma(P_{t})-\sigma(P_{t+1}))<0.
\end{align*}

\item If $g=3$ and $l_2=1$, our claim follows from Lemma \ref{Lem:Gather_Br}.

\end{itemize}

If $l_1\neq 2$, then all segments are of length 1. Let $u$ and $v$ be branching vertices of degree $3$ in $G$ and $U^1_m$, respectively. Then, it is easy to see (note that $U^1_m-[v]$ consists of $m-4$ isolated vertices) that
$$
\sigma(U^1_m)=\sigma(U^1_m-v)+\sigma(U^1_m-[v])
\geq \sigma(G-u)+\sigma(G-[u])=\sigma(G)
$$ 
with equality only if $G$ is isomorphic to $U^1_m$. 
\end{proof}

\subsection{Unicyclic graphs with given number of segments}
We first point out the following well known fact that can be directly obtained from Lemma 13 of \cite{Gutman198625} through
$\sigma(P_i\cup P_j)=F_{i+2}F_{j+2}=m(P_{i+1}\cup P_{j+1})$. Here $F_n$'s are the Fibonacci numbers and $m(G)$ is the matching number of $G$.
\begin{lemma}
\label{Lem:Sliding_MissingEdge}
Let $n$ be a positive integer, and write it as 
$n = 4m + h,$ for some $h \in \{1, 2, 3, 4\}$ and for some $m\in\mathbb{N}.$ Then 
\begin{align*}
\sigma(P_{n-2}\cup P_2)  <\sigma(P_{n-4}\cup P_4) < \cdots  <\sigma(P_{n-2m-2l}\cup P_{2m + 2l}) \\
<\sigma(P_{n-2m-1}\cup P_{2m + 1}) < \cdots  <\sigma(P_{n-3}\cup P_3)  <\sigma(P_{n-1}\cup P_1),
\end{align*}

where $l = \lfloor \frac{h-1}{2}\rfloor$.
\end{lemma}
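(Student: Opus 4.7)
The plan is to translate the statement from $\sigma$ into products of Fibonacci numbers, and then read off all the inequalities from a single Fibonacci product identity together with a short case analysis on $n\bmod 4$.

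First, I would establish $\sigma(P_k)=F_{k+2}$ by induction on $k$ from the recursion $\sigma(P_k)=\sigma(P_{k-1})+\sigma(P_{k-2})$ (obtained by conditioning on whether a chosen endpoint lies in the independent set), with base cases $\sigma(P_1)=2=F_3$ and $\sigma(P_2)=3=F_4$. Multiplicativity $\sigma(G_1\cup G_2)=\sigma(G_1)\sigma(G_2)$ over components then yields $\sigma(P_i\cup P_j)=F_{i+2}F_{j+2}$, and since $m(P_k)=F_{k+1}$ we also get $\sigma(P_i\cup P_j)=m(P_{i+1}\cup P_{j+1})$. In this matching-number form the chain is exactly Lemma~13 of \cite{Gutman198625}, so in principle one may simply quote that result.

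For a self-contained argument, the workhorse is the Fibonacci product identity
$$F_{a-c}F_{b+c}-F_aF_b=(-1)^{b}\,F_c\,F_{a-b-c},$$
provable from Binet's formula or by induction on $c$. Setting $a=n-i+2$ and $b=i+2$ so that $a+b=n+4$ is fixed, this identity controls how $\sigma(P_{n-i}\cup P_i)=F_aF_b$ changes under $i\mapsto i+c$; since $F_c$ and $F_{a-b-c}$ are positive in the relevant range, the sign of the change is $(-1)^b=(-1)^i$. Consequently $\sigma(P_{n-i}\cup P_i)$ is strictly monotone in each parity class of $i$: increasing on the even indices $\{2,4,\dots,2m+2l\}$ and decreasing on the odd indices $\{1,3,\dots,2m+1\}$, producing the two monotone portions of the chain.

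The remaining step is the parity crossover, i.e.\ comparing the last even term with the first odd term. This is a single application of the identity with $c=1-2l\in\{-1,1\}$, followed by a short case check on $h\in\{1,2,3,4\}$ verifying that the choice $l=\lfloor(h-1)/2\rfloor$ places the odd index on the side of the midpoint that makes the inequality go the right way. The main obstacle is purely bookkeeping: keeping track of the parity of $b$ and the value of $l$ in each residue class of $n\bmod 4$ so that the sign of $(-1)^b F_c F_{a-b-c}$ comes out correct at the crossover; once that is handled the proof is immediate.
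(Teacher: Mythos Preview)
Your proposal is correct and matches the paper's approach exactly: the paper's entire argument is the one-line observation that $\sigma(P_i\cup P_j)=F_{i+2}F_{j+2}=m(P_{i+1}\cup P_{j+1})$ reduces the statement to Lemma~13 of \cite{Gutman198625}, which you reproduce verbatim. Your additional self-contained argument via the Vajda-type identity $F_{a-c}F_{b+c}-F_aF_b=(-1)^{b}F_cF_{a-b-c}$ is a welcome bonus the paper does not supply, and it is sound.
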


\begin{proof}[Proof of Theorem \ref{theo:segnumSig}]
We will first prove case (i). The idea is very similar to that of the proof of Theorem~\ref{theo:segnum} and we skip some details. 

By Lemma \ref{Lem:Gather_Br}, we may let $G$ be a unicyclic graph obtained from attaching pendent paths to a cycle. Letting $G'$ be obtained from $G$ by moving all pendent paths to one single vertex, we have $\sigma(G')\geq \sigma(G)$. Furthermore, from Lemma \ref{Lem:Sliding}, it suffices now to consider the case where all except possibly one pendant segments of $G'$ are of length $1$. It is easy to see that whenever $G'$ does not have $m$ segments, we can pull an edge from the cycle or from the longest pendent segment, and keep the value of $\sigma(\cdot)$ non-decreasing at all times. In the rest of this proof we compare such a $G'$ with $U=U_2(l_1,\dots,l_m)$.

Denote by $u$ and $v$ the branching vertices in $U=U_2(l_1,\dots,l_m)$ and $G'$, respectively. Let $h_1$ and $h_2$ be the lengths of the cycle and the longest pendent path in $G'$, respectively. Then from our discussion above we have
$h_1\geq 3$ and $h_1+h_2=n-(m-2)$. By Lemma \ref{Lem:Sliding_MissingEdge} we have
\begin{align*}
\sigma(U)
&=\sigma(U-u)+\sigma(U-[u])\\
&=\sigma((m-2)P_1\cup P_3\cup P_{n-m-3})+\sigma(P_1\cup P_{n-m-4})\\
&\geq \sigma((m-2)P_1\cup P_{h_1-1}\cup P_{n-m+1-h_1})+\sigma(P_{h_1-3}\cup P_{n-m+h_1})\\
&=\sigma(G'-v)+\sigma(G'-[v])
=\sigma(G').
\end{align*}
Cases (ii), (iii) and (iv) follow from Theorem \ref{Th:Short_n_Sigma} and the fact that
\begin{align*}
&\sigma(U_{2,2}(1,\dots,1;1))-\sigma(U_1(3,1,\dots,1))\\
&\hspace{4cm}=2^{n-4}\sigma(S(1,1,1))+\sigma(P_2)-2^{n-4}\sigma(P_4)-2^4>0.
\end{align*}
\end{proof}

\section{Concluding remarks}
\label{sec:Con}

In this paper we mainly considered the extremal problems in unicyclic graphs with respect to the number of subtrees and the number of independent sets. Given the rich literature on extremal problems with respect to the number of subtrees in various classes of trees, it is rather surprising that our Theorems~\ref{theo:uni} and \ref{theo:unigir} appear to be the first time any extremal results have been presented for the number of subtrees in unicyclic graphs. The extremal trees that maximize or minimize the number of subtrees among all unicyclic graphs of a given order turned out to be exactly those minimize or maximize the Wiener index among these graphs. This further confirms the well known negative correlation between these two very well studied topological indices. 

We then considered the analogous problems for unicyclic graphs with a given segment sequence, characterizing the extremal graph (that maximize the number of subtrees) in Theorem~\ref{Th:MainSTree}. It is interesting to see examples showing that the analogous statement is not true for the Wiener index. This is perhaps the first pair of non-trivial extremal structures where the negative correlation failed. Based on the information on unicyclic graphs with a given segment sequence and following similar arguments the extremal problem, with respect to the number of subtrees, is also considered for unicyclic graphs with short segments or with a given number of segments, leading to Theorems~\ref{Th:Short_n} and \ref{theo:segnum}.

With respect to the number of independent sets of a graph, known as the Merrifield-Simmons index $\sigma(\cdot)$, similar questions are considered and results analogous to those for the number of subtrees are presented in Theorems~\ref{Th:MainInd}, \ref{Th:Short_n_Sigma} and \ref{theo:segnumSig}. Once again an example is provided, showing that the negative correlation between $\sigma(\cdot)$ and the number of matchings of a graph ($\Z(\cdot)$) fails to hold in unicyclic graphs with a given segment sequence. Noting that this also seems to be the first known case of such examples of extremal structures, it may be interesting to explore what makes the class of unicyclic graphs with a given segment sequence special in such studies.

\section{Acknowledgement} 
The authors would like to thank Prof. Stephan Wagner for his valuable comments that improved this paper.

 \bibliographystyle{abbrv}
 \bibliography{U_segment_sequences}

\end{document}